\documentclass[11pt]{article}

\usepackage[english]{babel}
\usepackage{authblk}

\usepackage{amsmath,amssymb,amsthm}
\usepackage{thmtools}
\usepackage{graphicx}
\usepackage[colorlinks=true, allcolors=blue]{hyperref}
\usepackage{tikz}
\usepackage{comment}

\usepackage[capitalise,nameinlink]{cleveref}

\declaretheorem[name=Theorem,numberwithin=section]{theo}
\newtheorem{cor}[theo]{Corollary}
\newtheorem{conj}[theo]{Conjecture}
\newtheorem{lemma}[theo]{Lemma}

\newcommand{\sm}{\setminus}

\DeclareMathOperator{\tw}{tw}
\title{(Even hole, triangle)-free graphs revisited}
\author{Beatriz Martins and Nicolas Trotignon%
\\{ENS de Lyon, CNRS, Université Lyon 1, LIP UMR 5668\\69342 Lyon Cedex 07, France}}

\begin{document}

\maketitle

\begin{abstract}
 We revisit a classical paper about (even hole, triangle)-free graphs  [Conforti, Cornu\'ejols, Kapoor and Vu\v skovi\'c, Triangle-free graphs that are signable without even holes, \textit{Journal of Graph Theory}, 34(3), 204--220, 2000]. 
 In fact, the previous study describes a more general class, the so called triangle-free odd signable graphs, and we further generalise the class to the (theta, triangle, wac)-free graphs (not worth defining in an abstract).  
 
 We exhibit a stronger structure theorem, by precisely describing basic classes and separators.  We prove that the separators preserve the treewidth and several properties.  
 Some consequences are a recognition algorithm with running time $O(|V(G)|^4|E(G)|)$, a proof that  the treewidth of graphs in the class is at most~4 (improving a previous bound of~5), and a simple criterion to decide if a graph in the class is planar. 
\end{abstract}

\noindent{\bf Keywords: } even-hole-free graphs, triangle-free graphs, treewidth, algorithms, structure.

\section{Introduction}
\label{sec:intro}

We consider simple graphs (finite, undirected, with neither loops nor multiple edges). A graph $G$ is $H$-free if $G$ does not contain an induced subgraph isomorphic to the graph $H$.  When $L$ is a list of graphs, $L$-free means $H$-free for all $H$ in $L$.  A path in a graph from a vertex $a$ to a vertex $b$ is an \emph{$ab$-path}. The \emph{length} of a path or a cycle refers to its number of edges. A \emph{triangle} is a cycle of length~3. A \emph{hole} in a graph is a chordless cycle of length at least~4. 

A \emph{theta} is a graph made of three chordless $ab$-paths of length at least~2, vertex-disjoint apart from $a$ and $b$, and such that the only edges are the edges of the paths (note that since the paths are chordless, $a$ and $b$ are not adjacent).  
Note that the union of any two paths of a theta forms a hole. 
A \emph{wheel} $W = (H, c)$ is a graph made of a hole $H$ called the
\emph{rim} and a vertex~$c$ called the \emph{centre} with at least
three neighbours in the rim, see Figure~\ref{fig:thetaWheel}.  
A graph induced by a hole $H$ together with two adjacent vertices that both have at least three neighbours in $H$ is called a \emph{wac} for short (this stands for \emph{Wheel with Adjacent Centres}), see Figure~\ref{fig:wacs} where planar and none planar wacs are represented. 

\begin{figure}
    \centering
    \includegraphics{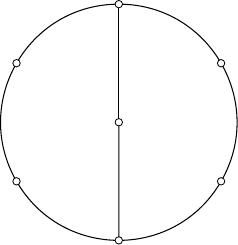}  
    \rule{2em}{0ex}
    \includegraphics{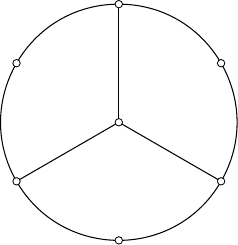}
    \caption{A theta and a wheel}
    \label{fig:thetaWheel}
\end{figure}

\begin{figure}
    \centering
    \includegraphics{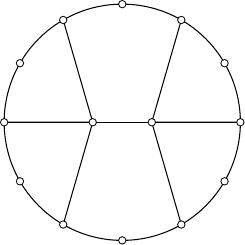}  
    \rule{2em}{0ex}
    \includegraphics{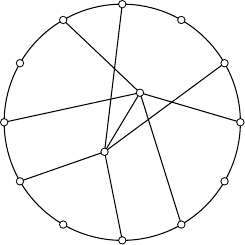}
    \caption{Two examples of (theta, triangle)-free wacs}
    \label{fig:wacs}
\end{figure}

Our main result is a structure theorem for (theta, triangle, wac)-free graphs. 

\begin{restatable}{theo}{decomp}
  \label{th:Decomp}
  If $G$ is a (theta, triangle, wac)-free graph, then $G$ is basic or $G$ has a clique separator, a proper 2-separator or a proper $P_3$-separator.
\end{restatable}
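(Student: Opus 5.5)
The plan follows the classical route of Conforti, Cornuéjols, Kapoor and Vušković: decompose along wheels, and separately handle graphs with no wheel. Throughout, $G$ is (theta, triangle, wac)-free and has no clique separator; since $G$ is triangle-free, this means $G$ is connected, has no cutvertex, and has no separator consisting of two adjacent vertices. Under these assumptions we show that $G$ is basic or has a proper 2-separator or a proper $P_3$-separator.

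\textbf{Wheels.} Suppose first that $G$ contains a wheel $W=(H,c)$. Since $G$ is triangle-free, the neighbours of $c$ on $H$ are pairwise non-adjacent. They split $H$ into \emph{sectors}, the subpaths of $H$ between consecutive neighbours of $c$. We choose $W$ extremal, for instance with $|V(H)|$ minimum, or with the maximum number of spokes. The aim is to show that some set $\{c,x\}$, or $\{x,c,y\}$ with $x,y$ neighbours of $c$, is a proper separator.

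To see this, we study how a vertex $v\notin W$, or a path outside $W$, attaches to $W$. If $v$ has two neighbours on $H$ lying in different sectors, and $v$ is not adjacent to $c$, we aim to exhibit a theta. If $v$ is adjacent to $c$, then $v$ has no neighbour that is a neighbour of $c$ (no triangles). Moreover, $v$ cannot have three or more neighbours on $H$, since then $v$ and $c$ would be adjacent centres of a wac. A case analysis should show that every vertex outside $W$ attaches to $W$ within a single sector, possibly together with $c$. Connected components of $G\sm W$ are then confined to a sector $S$ plus $c$. Consequently, the two ends of $S$ together with $c$ form a $P_3$ separating the interior of $S$ (and its attached components) from the rest of $H$. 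Properness of this separator has to be checked against the definition; where the separator degenerates, for example when a sector has length 2 or there are only 3 spokes, we pass to a 2-separator.

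\textbf{Wheel-free case.} Suppose next that $G$ contains no wheel. Then $G$ is (theta, triangle, wheel)-free. This case should lead either to a basic class (holes, or structures like long subdivisions of specific graphs) or to a 2-separator. Take a hole $H$ and a vertex or path attaching to it. Having no theta and no wheel forces every vertex outside $H$ to have at most two neighbours on $H$. Attachments by paths should then yield either a proper 2-separator or a prism/pyramid-like structure, and pyramids and prisms contain triangles. I would reuse the known structure of (theta, wheel)-free or universally signable graphs: without triangles these have a clique cutset or are holes. If that characterization is not available in the paper, I will redo the argument directly with a shortest-path attachment lemma.

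\textbf{Main obstacle.} The hard step is the wheel case: proving that every component of $G\sm W$ attaches within one sector. This requires handling paths, not just single vertices, that touch two sectors, and producing a theta, triangle, or wac each time. Choosing the right extremal wheel, and matching the resulting separator with the paper's precise definition of ``proper'' (which I do not yet know), is where I expect most of the difficulty. Basic graphs presumably include particular wheel-containing graphs, such as those where no extremal choice yields a proper separator; these must be identified exactly.
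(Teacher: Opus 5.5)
There is a genuine gap. It sits in the step you flag as the main obstacle, and the claims you intend to prove there are false as stated.

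First, the attachment claim fails for the cube. Write the cube with parts $\{a_1,\dots,a_4\}$ and $\{b_1,\dots,b_4\}$, with $a_ib_j$ an edge exactly when $i\neq j$. Then $H=a_1b_2a_3b_1a_2b_3a_1$ is a hole and $(H,a_4)$ is a wheel. The vertex $b_4$ is non-adjacent to $a_4$, yet it has a neighbour in the interior of each of the three sectors, and there is no theta, triangle or wac anywhere. The cube is one of the basic graphs, so it must be disposed of before any attachment lemma: a (theta, triangle)-free graph containing the cube is the cube or has a clique separator. Only in cube-free graphs does ``every vertex, and then every connected set, of $G\sm W$ attaches within one sector plus possibly $c$'' hold. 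Your wheel-free case is fine in spirit. There, a vertex outside a hole has at most one neighbour on it, and the outcome is $K_1$, $K_2$, a hole or a clique separator; no 2-separator arises.

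Second, and more seriously, suppose every component $Z$ of $G\sm W$ attaches within $S\cup\{c\}$ for a sector $S=s\dots s'$. Even then, $scs'$ is in general not proper, and no extremal choice of $W$ repairs this. Take $G$ a daisy with two petals, viewed as the wheel $W_1=(H_1,c_1)$ plus the second petal attached to $Q_1\cup\{c_1\}$. The side of $c_kc_1c_2$ not containing that petal, together with $c_k$ and $c_2$, is just the other $c_kc_2$-path of $H_1$. In fact a daisy has no clique separator, no proper 2-separator and no proper $P_3$-separator at all, so your fallback ``pass to a 2-separator'' cannot work. The degenerate cases are exactly the daisies, i.e.\ the basic class you have not identified.

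The missing idea is to grow the structure: take a maximal daisy $D$ containing a wheel, instead of an extremal wheel, and let $Z$ be a component of $G\sm D$.
\begin{itemize}
\item If $Z$ touches a petal $P_i$ and $D$ has at least two petals, then $N_D(Z)\subseteq S\cup\{c_i\}$ for an external sector $S=s\dots s'$ of $W_i$. Then $\{s,s'\}$ or $sc_is'$ is proper: $Z$ keeps one side from being a path, and a second petal keeps the other side from being one.
\item Otherwise $D$ is a single wheel, or $Z$ touches only the base hole. Then the attachment analysis for connected sets gives one of four outcomes: a clique separator; a proper 2-separator; a larger daisy, contradicting maximality; or a daisy $D'$ whose base hole is rerouted through the new path, in which some old petal touches a petal of $D'$. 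The last outcome reduces to the previous case.
\end{itemize}
Without this daisy-growing argument, your proof cannot distinguish ``basic'' from ``has a proper separator''.
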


We postpone the precise definitions of the basic graphs to Section~\ref{sec:basic} and of the separators to Section~\ref{sec:separator}.  Let us just briefly explain that basic graphs include $K_1$, $K_2$ (where $K_\ell$ denotes the complete graph on $\ell$ vertices), the cube (represented in Figure~\ref{fig:cube}) and special graphs that we call \emph{daisies}, formed of one hole with paths that we call ``petals'' added around it, see Figure~\ref{fig:daisy}. 
The proper 2-separator is a separator made of two non-adjacent vertices, with the additional condition that removing it yields two components that are substantial in some sense (not reduced to a path). 
The proper $P_3$-separator is a separator equal to a path on three vertices, with a similar non-degeneracy property (plus other technical conditions).

We insist that Theorem~\ref{th:Decomp} is not only a decomposition theorem but a structure theorem for our class as we explain in Section~\ref{ssec:struct}.
An evidence of its strength is  the following corollary that is obtained by showing that our separators preserve the treewidth (see Section~\ref{ssec:tw}), which is of independent interest since they may serve to decompose other classes. 

\begin{theo}
  \label{th:TW}
  Every (theta, triangle, wac)-free graph has treewidth at most~4 (and some have treewidth exactly~4). 
\end{theo}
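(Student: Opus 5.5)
The plan is induction on $|V(G)|$, driven by Theorem~\ref{th:Decomp}. Since a clique in a triangle-free graph has at most two vertices, the statement splits into three parts: basic graphs have treewidth at most~4; each of the three separators is compatible with treewidth at most~4; and some graph in the class has treewidth exactly~4. The separator part will be phrased as: if $S$ is a clique separator, a proper 2-separator or a proper $P_3$-separator of $G$, then $G$ has two (theta, triangle, wac)-free \emph{blocks} $G_1,G_2$, each with fewer vertices than $G$, such that $\tw(G)\le \max(\tw(G_1),\tw(G_2))$ whenever both blocks have treewidth at most~4.

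\textbf{Basic graphs.} For $K_1$ and $K_2$ the bound is trivial. For the cube I will write down an explicit tree decomposition of width~4, for instance a path decomposition using the two parallel faces. For a daisy I will take a path decomposition of the central hole of width~2. I then add each petal, a path attached to the hole at its ends, by putting the petal's endpoints (or the relevant hole vertices) into the bags of a path decomposition of the petal. This adds a bounded number of vertices per bag. If petals attach at disjoint or non-crossing locations, the width should stay at most~4. I expect to need the precise daisy definition from Section~\ref{sec:basic} to check that petals overlap in a controlled way.

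\textbf{Clique separators and 2-separators.} For a clique separator $S$ (of size at most~2), let $C_1,\dots,C_k$ be the components of $G\setminus S$. The blocks are $G[C_i\cup S]$, which are induced subgraphs and so lie in the class. Gluing their tree decompositions along a bag containing $S$ gives the bound. For a proper 2-separator $\{a,b\}$ with $a,b$ non-adjacent, the block $G_i$ is built from $G[C_i\cup\{a,b\}]$ by adding a marker $ab$-path of length $2$ or $3$, chosen with parity matching a path through another component. Properness (the other side is not just a path) guarantees that $G_i$ is smaller than $G$. I must check that $G_i$ has no theta, triangle or wac: such a configuration using the marker path would lift to one in $G$ by replacing the marker path with a genuine $ab$-path through another component. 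Each $G_i$ contains $\{a,b\}$ in a common bag after contracting the marker path, so the decompositions glue. Alternatively, I note that a graph containing an $ab$-path outside $C_i$ has the minor $G[C_i\cup\{a,b\}]+ab$, so treewidth is monotone under this minor relation.

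\textbf{$P_3$-separators and the lower bound.} The $P_3$-separator $a$--$c$--$b$ is the step I expect to be the main obstacle. The natural block is $G[C_i\cup\{a,b,c\}]$ plus a marker gadget, and I need a tree decomposition in which $\{a,b,c\}$ lies in one bag. The idea is to build the marker so that the block contains, as a minor, the graph on $C_i\cup\{a,b,c\}$ with the triangle $abc$ completed. Then every tree decomposition of the block has a bag containing $\{a,b,c\}$, and gluing along that bag works. The technical conditions in the definition of a proper $P_3$-separator should be exactly what makes this marker exist without creating a theta or a wac, and what keeps the block smaller. Finally, for tightness, I will exhibit the cube, or a wac-free daisy, and show it has treewidth at least~4. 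For the cube this can be done via a bramble, or via the fact that it has minimum degree~3 and no separation of order~4 splits it suitably. The bound can also be certified by an $8$-vertex case check showing no tree decomposition of width~3 exists.
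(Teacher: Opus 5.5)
Your upper-bound plan follows the paper: induction on Theorem~\ref{th:Decomp}, explicit decompositions of the basic graphs, and gluing along a bag that contains the separator after contracting a marker path into an edge. But the tightness part is false as proposed, because the cube has treewidth~$3$, not~$4$. Write its bipartition as $A=\{a_1,\dots,a_4\}$, $B=\{b_1,\dots,b_4\}$ with $a_ib_j\in E$ if and only if $i\neq j$. Then the star with central bag $A$ and leaf bags $\{b_i\}\cup(A\setminus\{a_i\})$ is a tree decomposition of width~$3$, so no bramble or case check can certify width~$4$. Saying ``a wac-free daisy'' does not rescue the lower bound either: every daisy other than a full $k$-daisy with $k\geq 5$ has treewidth at most~$3$. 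Such a full daisy is the witness you need. In the full $5$-daisy, take for each $i$ the set $\{c_i\}\cup P$, where $P$ is the petal whose ends are adjacent to $c_i$ and $c_{i+2}$. These five sets are connected and pairwise adjacent, so they model a $K_5$-minor and the treewidth is at least~$4$. Your daisy upper bound has the matching issue, which your sketch does not yet address. Consecutive petals attach to overlapping triples $\{c_{i-1},c_i,c_{i+1}\}$, and in a full daisy these triples wrap around the whole hole. This is exactly what forces width~$4$. Bags $\{c_1,c_2,c_i,c_{i+1},c_{i+2}\}$ along the hole, with each petal's path decomposition hung off a bag containing its triple, suffice.

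The $P_3$-separator step is also left open, and an artificial gadget is a poor way to close it. On the loose side $X$, a marker $ab$-path with no internal neighbour of $c$ forms an $ab$-theta together with $acb$ and a loose $aXb$-path. A marker that does contain neighbours of $c$ does not obviously lift back to $G$. The paper avoids all verification by using genuine paths as markers. It sets $G_X=G[X\cup acb\cup Q]$ with $Q$ any $aYb$-path, and $G_Y=G[P\cup acb\cup Y]$ with $P$ an $aXb$-path having no internal neighbour of $c$. These are induced subgraphs of $G$, hence in the class, and properness (neither side induces an $ab$-path) makes them smaller than $G$. Now delete the vertices of $Q\setminus b$ from every bag of an optimal decomposition of $G_X$, adding $a$ wherever one was present. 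This gives a decomposition of $G[X\cup acb]$ of no larger width, with bags containing $\{a,b\}$, $\{a,c\}$ and $\{c,b\}$. By the Helly property, some bag then contains $\{a,c,b\}$. It is this modified decomposition, not every tree decomposition of the block as you wrote, that has such a bag. The same choice of markers also makes your parity bookkeeping for 2-separators unnecessary.
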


We also describe an algorithm that decides in time $O(|V(G)|^4|E(G)|)$ whether an input graph $G$ is (theta, triangle, wac)-free, and if so, computes a tree decomposition of width $\tw(G)$, see Theorem~\ref{th:algo}. 

Let us now motivate Theorem~\ref{th:Decomp} by explaining its relation with several well studied classes of graphs.

\subsection*{Even-hole and odd-hole-free graphs}

A hole is \emph{even} or \emph{odd} according to the parity of its length. Even-hole-free graphs attracted a lot of attention, see~\cite{vuskovic:evensurvey} for a survey. 
Despite decades of work resulting in decomposition theorems~\cite{conforti.c.k.v:eh1,dsv:ehf} and polynomial time recognition algorithms~\cite{conforti.c.k.v:eh2,DBLP:conf/stoc/LaiLT20}, no structure theorem is known for them.  
Also several questions are still open, such as the existence of a polynomial time algorithm to colour them or to find a maximum independent set. 

Still, a complete structural description is known for some subclasses of even-hole-free graphs, such as the classical \emph{chordal graphs} (that are the hole-free graphs) or the graphs where all holes have length $2k+1$ for some fixed integer $k\geq 3$, see~\cite{DBLP:journals/jctb/CookHPRSSTV24}.

Here, we are interested in the (even hole, triangle)-free graphs.  A slightly more general class was studied by Conforti, Cornu\'ejols, Kapoor and Vu\v skovi\'c in~\cite{DBLP:journals/jgt/ConfortiCKV00} that is the main inspiration of this work. 
Let us explain this. 
A wheel $(H, c)$ is \emph{even} if $c$ has an even number of neighbours in $H$. A {\em prism} is a graph made of three vertex-disjoint chordless paths
$P_1 = a_1 \dots b_1$, $P_2 = a_2 \dots b_2$, $P_3 = a_3 \dots b_3$ of length at least 1, such that $a_1a_2a_3$ and $b_1b_2b_3$ are triangles and no edges exist between the paths except those of the two
triangles.  
A  graph is \emph{odd-signable} if it contains no theta, no prism and no even wheel.  Observe that since a prism contains a triangle, a triangle-free graph is odd-signable if and only if it contains no theta and no even wheel. 
This is precisely the class studied in~\cite{DBLP:journals/jgt/ConfortiCKV00}: triangle-free odd-signable graphs, or equivalently (theta, triangle, even wheel)-free graphs (from now on, we use this last name because it is more consistent with the others classes that we consider).

It turns out that  all thetas, prisms and even wheels contain even holes, and  triangle-free wacs contain either an even wheel or a theta (and therefore even holes), see Section~\ref{ssec:ehf} for more explanations.  
So, the class of (theta, triangle, wac)-free graphs is a super-class of (theta, triangle, even wheel)-free graphs, that is in turn a super-class of (even hole, triangle)-free graphs.
These inclusions are strict, as shown by an even hole and an even wheel. Our work is therefore a generalisation of~\cite{DBLP:journals/jgt/ConfortiCKV00}.

Theorem~\ref{th:Decomp} specialises well to  the subclasses that we consider as explained in Section~\ref{ssec:ehf}: we just have to restrict the basic class.    The technicalities in our proofs are very similar to the ones in~\cite{DBLP:journals/jgt/ConfortiCKV00}, but our results differ in several ways. 
They apply to a more general class and exhibit  basic classes and separators not described in~\cite{DBLP:journals/jgt/ConfortiCKV00}.  
Still, \cite{DBLP:journals/jgt/ConfortiCKV00} contains a nice characterisation of the classes under consideration by the so-called \emph{good ear addition}, and we show in Section~\ref{ssec:ear} that it can be viewed as a corollary of Theorem~\ref{th:Decomp}, see Theorem~\ref{th:ear}. 

Odd-hole-free graphs also attracted some attention, so one may wonder if our theorems implies something for them. It turns out that (triangle, odd hole)-free graphs exactly form the class of bipartite graphs, and this is why we consider bipartite (theta, wac)-free graphs in what follows.

\subsection*{Theta-free graphs}

Since every theta contains an even hole, theta-free graphs are a generalisation of even-hole-free graphs.     
Little is known about the structure of theta-free graphs. 
A structural description is available for some subclasses, such as claw-free graphs~\cite{DBLP:conf/bcc/ChudnovskyS05} (where the \emph{claw} is the graph on four vertices, made of one vertex adjacent to three pairwise non-adjacent vertices) and (theta, wheel)-free graphs~\cite{diotRaTrVu:15,RaTrVu:partII,RaTrVu:partIII,RaTrVu:partIV}.  
Many open problems remain, such as the existence of a polynomial time algorithm for the maximum independent set problem~\cite{DBLP:journals/jacm/FaenzaOS14} or induced linkages~\cite{fialaKLP:12}
(the reference for each problem refers to a paper where the question is solved for claw-free graphs).  Also, theta-free graphs might have a quadratic $\chi$-binding function, see~\cite{Bourneuf2024On}. 

So one more time, we believe that the triangle-free case is of interest.  
In~\cite{radovanovicV:theta}, it is proved that every (theta, triangle)-free graph with no clique separator and distinct from the cube has a vertex of degree at most~2 (implying that every (theta, triangle)-free graph is 3-colourable). In~\cite{abrishamiEtAl:twIII}, it is proved that (theta, triangle)-free graphs have logarithmic treewidth.  
This suggests that (theta, triangle)-free graphs might have a simple structure, but in~\cite{DBLP:journals/jgt/SintiariT21}, it is shown that there are complex enough to  contain the so-called (theta, triangle)-free layered wheel (and in particular arbitrarily large complete graphs as a minor). 

We hope that the present work is a step toward a full understanding of the structure of (theta, triangle)-free graphs, see Section~\ref{ssec:conj}.

\subsection*{Planarity and widths}

Our structure theorem allows understanding several properties of the classes we consider. For instance, in Section~\ref{ssec:planar}, we give a simple criterion for a (theta, triangle, wac)-free graph to be planar (see~Theorem~\ref{th:planar}), and we deduce that bipartite (theta, wac)-free graphs are all planar, which is maybe not obvious from their definition.  

In Section~\ref{ssec:tw}, we prove several consequences of our structure theorem for some width parameters.  
There have been a recent interest in understanding the interplay between excluding induced subgraph and bounding the treewidth (definitions are postponed to Section~\ref{ssec:tw}), see for instance~\cite{abrishamiEtAl:twIII}. 
A full understanding of this interplay is maybe hopeless as indicated in~\cite{alecuEtAl:largeTW}, but in our particular class, we explicitly exhibit certificates for all possible values of the treewidth, see Lemma~\ref{l:tw}.  
In particular, as already stated in Theorem~\ref{th:TW}, we prove that the treewidth of any (theta, triangle, wac)-free graph is at most~4, which improves and generalises the bound of~5 proved in~\cite{DBLP:journals/dm/CameronSHV18} for (theta, triangle, even wheel)-free graphs (and therefore (even hole, triangle)-free graphs).  
Moreover, our method is potentially applicable to other classes of graphs, because we prove that the separators that we use preserve the treewidth in quite a strong sense, see Lemma~\ref{l:decTW}. 

It was observed in~\cite{DBLP:journals/dm/CameronSHV18} that bounding  the treewidth of (theta, triangle, even wheel)-free graphs by $t$ immediately yields bounds for other parameters and other classes. 
More precisely, an upper bound of $3\times 2^t$ for the cliquewidth of (theta, triangle, even wheel)-free graphs is obtained. A consequence is that for any (theta, cap, $C_4$, even wheel)-free graph, the cliquewidth is also at most $3\times 2^t$ and the treewidth at most $(t+1) \omega(G) -1$,  where $C_4$ denotes the hole of length~4, a \emph{cap} is a cycle of length at least~5 with with exactly one chord creating a triangle with the cycle and $\omega(G)$ denotes the maximum number of pairwise adjacent vertices in $G$.  
Hence our result allows to bring the cliquewidth down from 48 to 24 and the treewidth from $6\omega(G) -1$ to $5\omega(G) -1$ for the relevant classes, see Section~\ref{ssec:tw}. 

Note that in contrast, (theta, triangle)-free graphs and ($K_4$, even hole)-free graphs have arbitrarily large treewidth and cliquewidth as proved in~\cite{DBLP:journals/jgt/SintiariT21}.

\subsection*{Algorithms}

Since the treewidth of (theta, triangle, wac)-free graphs is bounded, many problems can be solved in polynomial time for them. We still describe in Section~\ref{ssec:algo} a self-contained decomposition algorithm for an input graph $G$, running in time $O(|V(G)|^4|E(G)|)$ not relying on the treewidth, and with two applications: recognition and actually building a tree representation of width $\tw(G)$ provided that the graph is (theta, triangle, wac)-free, see Theorem~\ref{th:algo}.  

Note that for the recognition algorithm, an obvious approach would be to test separately for the presence of a theta, a triangle, and a wac in the input graph.  This is tempting because detecting a theta can be performed in polynomial time, see~\cite{DBLP:conf/stoc/LaiLT20} for the fastest known algorithm. But this approach fails since we prove in Section~\ref{s:testWac} that testing for a wac is an NP-complete problem, see Theorem~\ref{th:wacNPC}.

\subsection*{Structure theorem}

As explained in Section~\ref{ssec:struct}, Theorem~\ref{th:Decomp} is not just a decomposition theorem, but rather a complete structural description of (theta, triangle, wac)-free graphs (that specialises to  (theta, triangle, even wheel)-free, (even hole, triangle)-free and bipartite (theta, wac)-free graphs), see Theorem~\ref{th:struct}.

\subsection*{Outline of the paper}

In Section~\ref{sec:basic}, we describe the basic classes. 
In Section~\ref{sec:separator}, we describe the separators. 
In Section~\ref{sec:know}, we describe several known results that we need.  
In Section~\ref{sec:lem}, we prove some technical lemmas needed in Section~\ref{sec:proofs} where we prove Theorem~\ref{th:Decomp}. 

In Section~\ref{sec:applications} we derive several consequences of our work. Namely, we explain why our classes generalises some known classes in Section~\ref{ssec:ehf}. We reprove (and improve) all results concerning the so-called ear-decomposition in Section~\ref{ssec:ear}.  We prove the planarity criterion in Section~\ref{ssec:planar}.  We give all our results concerning the treewidth and cliquewidth in Section~\ref{ssec:tw}. 
We prove that testing for a wac is NP-complete in Section~\ref{s:testWac}.  We describe the algorithms in Section~\ref{ssec:algo}. We explain why our theorem is in fact a structure theorem in Section~\ref{ssec:struct}.  We give several open questions in Section~\ref{ssec:conj}.

\subsection*{Notations}

We denote by $[k]$ the set $\{1, \dots, k\}$. When $v$ is a vertex of some graph $G$ and $X\subseteq V(G)$, we set $N_X(v) = N(v) \cap X$. 

We view a \emph{path in a graph $G$} as a sequence $P= v_1 \dots v_k$ of vertices such that for all $i, j \in [k]$, $v_iv_j\in E(G)$ if and only if $|j-i|=1$. The vertices $v_1$ and $v_k$ are the \emph{ends} of $P$, and its other vertices are \emph{internal}.   Note that this is not standard, because this is usually called a chordless, or induced path, but since all the paths that we consider are induced, this is convenient.  When $a$ and $b$ are vertices of some path $P$, we denote by  $aPb$ the subpath of $P$ with ends $a$ and $b$.  A \emph{hole} is defined as a path, with the additional conditions that $k\geq 4$ and $v_kv_1\in E(G)$. 

Since all the paper deals with induced subgraphs, we often make no difference between a set of vertices of some graph $G$ (or path or hole) and the graph it induces.  For instance, when $P$ is a path, we may write $G\sm P$ instead of $G[V(G) \sm V(P)]$.  We hope that this only make the notation less heavy and does not lead to any confusion. 

When $a, b\in V(G)$, an $ab$-path is a path whose ends are $a$ and $b$. When $H\subseteq G$, an {\it $aHb$-path} as an $ab$-path with internal vertices in $H$. 

A theta such that $a$ and $b$ are the ends of the three paths is an \emph{$ab$-theta}. When $W=(H, c)$ is a wheel, a path of $H$ of length at least~1 whose ends are adjacent to $c$ and whose internal vertices are not is called a \emph{sector} of $W$.  

We denote by  $(H, c, c')$ a wac such that $(H, c)$ and $(H, c')$ are wheels and $cc'\in E(G)$. 
Note that a wac may contain a triangle or a theta.   We sometimes need to distinguish wacs where the centres cross. A wac $W=(H, x, y)$ such that some sector of $(H, y)$ contains all neighbours of $x$ in $H$ is called a \emph{turtle}.  A wac that is not a turtle is called a \emph{crossing wac}, or \emph{c-wac} for short.

\section{Basic graphs}
\label{sec:basic}

A \emph{petal} with respect to a hole $C = c_1 \dots c_k c_1$ is a path $P = x \dots y$ disjoint from $C$ such that for some $i\in [k]$, $x$ is adjacent to $c_{i-1}$, $y$ is adjacent to $c_{i+1}$, $c_i$ is adjacent to at least one internal vertex of $P$, no edge of $c_{i-1} x P y c_{i+1}$ has both its vertices adjacent to $c_i$, and there are no other edges between $P$ and $C$ (subscripts are modulo $k$). 
The petal is \emph{centred at $c_i$} and $c_i$  the \emph{centre} of the petal.

A \emph{daisy} is a graph $G$ formed of a hole $C$ together with petals with respect to $C$ and such that no two petals have the same centre,  the centres of the petals induce a (possibly empty) subpath of $C$ or $C$ itself, and there are no other edges than the edges of the petals, the edges of the hole and the edges between some petal and the hole. 
An example is represented in Figure~\ref{fig:daisy} (this graph is represented at the end of~\cite{DBLP:journals/jgt/ConfortiCKV00} as an example of a non-planar (even hole, triangle)-free graph).   
Observe that a daisy with no petal is a hole, and a daisy with one petal is a wheel.  
A daisy is \emph{even} or \emph{odd} according to the parity of $C$. 
If every vertex of $C$ is the centre of some petal, we call $G$ a \emph{full daisy}. When $C$ has length $k$, the daisy is a $k$-daisy.  It is easy to check that every daisy is (theta, triangle, wac)-free.

\begin{figure}
    \centering
    \includegraphics{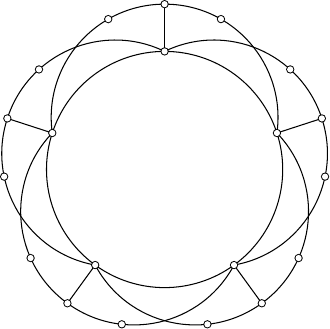}
    \caption{The Vu\v skovi\'c graph (the smallest non-planar daisy)}
    \label{fig:daisy}
\end{figure}

We  denote by $Q_i$ the path of $C$ from $c_{i-1}$ to $c_{i+1}$
that does not contain~$c_i$.  We denote by $P_i$ the petal of the daisy centred at $c_i$ (if any), and set $P_i = x_i \dots y_i$ where $x_i$ is adjacent to $c_{i-1}$ and $y_i$
is adjacent to $c_{i+1}$.  
We observe that $P_i$ and $Q_i$ form a hole $H_i$, and $W_i=(H_i, c_i)$ is a wheel.  
We observe that $Q_i$ is a sector of $W_i$ and the other sectors are subpaths of $c_{i-1} x_i P_i y_i c_{i+1}$ called the \emph{external sectors of $W_i$}.

We already noted that a wheel $W = (H, c)$ is a daisy with exactly one petal.  
To see this, consider a sector $S= s\dots s'$ of $W$ and observe that $H\sm S$ is a petal with respect to the hole $C = csSs'c$. 
Hence, there are several ways (one for each sector) to view the wheel as a daisy, but whatever sector is chosen, $c$ is the centre of the unique petal. 
When there are more petals (or none), the hole of the daisy is uniquely determined: it is the unique hole going though all centres of its petals.  
In all cases, the vertices that are centres of wheels are all on the hole $C$.

The cube is the graph on eight vertices represented on Figure~\ref{fig:cube}.  A graph is \emph{basic} if it is isomorphic to $K_1$, $K_2$, the cube or a daisy. 

\begin{figure}
    \centering
    \includegraphics{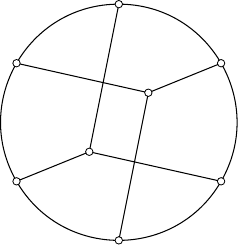}
    \caption{The cube}
    \label{fig:cube}
\end{figure}

\section{Separators}
\label{sec:separator}

We here define all the separators that we need in our structure
theorem.  We call \emph{separator} of $G$ a set $X$ of vertices of a graph $G$ such that $G\sm X$ has at least two connected components. In what follows $X$ will be a (possibly empty) clique, a set $\{a, b\}$ of two non-adjacent vertices, or a set inducing a path $acb$ of length~2.  Since we are interested in triangle-free graphs, the clique separator is on at most two vertices, and the possible separators are precisely the induced subgraphs of some path of length~2.  There might therefore be a more unified way to view all of them, but we believe this would lead to unnecessary abstraction and complicated explanations, so we prefer viewing them one by one.   

\subsection*{Clique separator}

A \emph{clique separator} in a graph $G$ is a (possibly empty) clique $K$ such that $G\sm K$ has at least two connected components.  

Note that basic graphs, thetas, triangle, wacs, even holes, prisms, even wheels and odd holes have no clique separators. 

When $K$ is a clique separator of a graph $G$ and $X_1$, \dots, $X_k$ are the connected components of $G\sm K$, we call \emph{blocks of decomposition} of $G$ with respect to $K$ the graphs $G[K \cup X_1]$, \dots, $G[K \cup X_k]$.

\begin{lemma}
  \label{l:decClique}
  When $G$ is a graph with a clique separator $K$, then $G$ is
  theta-free (resp.\ triangle-free, wac-free, turtle-free, c-wac-free, even-hole-free, prism-free, even-wheel-free, bipartite) if and only if all its blocks of decomposition with respect to $K$ are theta-free (resp.\ triangle-free, wac-free, turtle-free, c-wac-free, even-hole-free, prism-free, even-wheel-free, bipartite).
\end{lemma}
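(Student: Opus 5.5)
The "only if" direction needs no work: every block $G[K\cup X_i]$ is an induced subgraph of $G$, so if $G$ contains none of the listed configurations, neither does any block. Bipartiteness is also inherited by induced subgraphs.

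For the "if" direction I would prove a single key claim: \emph{if $F$ is an induced subgraph of $G$ and $F$ has no clique separator, then $V(F)\subseteq K\cup X_i$ for some $i$.} Suppose $F$ meets two components, say $X_i$ and $X_j$ with $i\neq j$. Then $K\cap V(F)$ is a clique of $F$. Removing it from $F$ leaves $F\sm K$, whose vertices in $X_i$ and in $X_j$ cannot be joined by a path, because every path from $X_i$ to $X_j$ in $G$ goes through $K$. So $K\cap V(F)$ is a clique separator of $F$, a contradiction. Note this covers $K\cap V(F)=\emptyset$, since the empty set is allowed and then $F$ is disconnected.

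It then remains to check that each forbidden object has no clique separator. The paper already notes this for thetas, triangles, wacs, even holes, prisms and even wheels, and it applies in particular to turtles and c-wacs, which are wacs. I would verify it quickly. Each of these graphs is connected. Every vertex lies on a hole, or in the triangle case the graph is itself a clique with nothing to separate. So removing a single vertex or an edge of a hole leaves the rest of that hole connected. For the extra vertices, which are the centres of wheels or wacs, the third path of a theta, or the prism paths, the centre has at least three neighbours on the rim, so it stays attached after deleting at most two adjacent rim vertices. In a wac, deleting the edge $cc'$ still leaves the rim connected, with both centres attached to it. Thetas and prisms are handled similarly, since any two of their three paths form a hole. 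This case-checking is the only tedious part, and it is routine.

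With the claim in hand, if $G$ contains one of these objects as an induced subgraph, it lies entirely in some block, and the equivalence follows for all the classes except bipartite. For bipartiteness I would use odd cycles instead. If $G$ is not bipartite, it contains a shortest odd cycle, which is chordless: either a triangle or an odd hole. Neither has a clique separator, so by the claim it lies in some block, and that block is not bipartite.
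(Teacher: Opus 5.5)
Your proof is correct and follows the paper's argument: the paper also reads ``bipartite'' as (triangle, odd hole)-free and uses that none of the excluded structures has a clique separator, so any copy in $G$ lies entirely inside one block. You additionally spell out the lemma that a clique-separator-free induced subgraph lies in some $G[K\cup X_i]$, and the routine check that each structure has no clique separator (your remark that ``every vertex lies on a hole'' fails for a wac centre adjacent to the whole rim, but your separate treatment of centres via their three rim neighbours already covers this).
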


\begin{proof}
  If $G$ is theta-free (resp.\ triangle-free, wac-free, turtle-free, c-wac-free, even-hole-free, prism-free, even-wheel-free, bipartite) then its block of decomposition are also theta-free (resp.\ triangle-free, wac-free, turtle-free, c-wac-free, even-hole-free, prism-free, even-wheel-free, bipartite) since they are induced subgraphs of $G$.

  Let us now consider all the converse claims. We view ``bipartite'' as ``(triangle, odd hole)-free'', so each claim is about excluding some structures. Each time, we assume that some structure $H$ under consideration is in $G$ and in none of the blocks of decomposition. This leads to a contradiction, because $H$ has no clique separator as we already observed, so if it is present in $G$, it must lie entirely in one block of decomposition. 
\end{proof}

A graph is \emph{atomic} if it has no clique separator.

\subsection*{2-separator}

When $a$ and $b$ are non-adjacent vertices of some graph $G$ such that $G\sm \{a, b\}$ is not connected, we call $\{a, b\}$ a \emph{2-separator} of $G$.

\begin{lemma}
  \label{l:eligible2K1}
  If $\{a, b\}$ is a 2-separator of an atomic theta-free graph~$G$, then  $G\sm \{a, b\}$ has exactly two connected components $X$ and $Y$, and $G$ contains an $aXb$-path and an $aYb$-path. 
\end{lemma}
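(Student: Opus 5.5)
The plan is to exploit atomicity first: every component of $G\sm\{a,b\}$ sees both $a$ and $b$, and this yields the required paths. Then theta-freeness will bound the number of components by two.

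\textbf{Step 1: each component sees both $a$ and $b$.} Let $Z$ be any connected component of $G\sm\{a,b\}$. Suppose $Z$ has no neighbour of $b$. Then $N(Z)\subseteq\{a\}$. If $N(Z)=\emptyset$, the empty clique separates $G$, since $G\sm\{a,b\}$ has at least two components. Otherwise $\{a\}$ is a clique separating $Z$ from the rest of $G$, which is nonempty because it contains $b$. Either way $G$ has a clique separator, contradicting atomicity. So every component $Z$ contains a neighbour of $a$ and a neighbour of $b$.

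\textbf{Step 2: the paths.} Since $Z$ is connected, take a shortest path in $G[Z\cup\{a,b\}]$ from $a$ to $b$ whose internal vertices lie in $Z$. One can take, for instance, a shortest path in $Z$ from $N_Z(a)$ to $N_Z(b)$ and add $a$ and $b$, choosing it minimal so that only its first vertex is adjacent to $a$ and only its last is adjacent to $b$. Because $a$ and $b$ are non-adjacent, this is an induced $aZb$-path of length at least~2. Note that a single vertex of $Z$ may be adjacent to both $a$ and $b$; this gives length exactly~2, which is allowed.

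\textbf{Step 3: exactly two components.} By definition of a 2-separator there are at least two components. Suppose there are three, $X$, $Y$ and $Z$. Step 2 gives induced paths $P_X$, $P_Y$ and $P_Z$, each from $a$ to $b$ of length at least~2. Their interiors lie in distinct components, so there are no edges between interiors of different paths, and $ab\notin E(G)$. Hence $P_X\cup P_Y\cup P_Z$ is an $ab$-theta, contradicting theta-freeness.

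\textbf{Main obstacle.} The only point needing care is choosing the paths so that each is chordless with respect to $a$ and $b$. Taking a shortest path resolves this. The rest is direct.
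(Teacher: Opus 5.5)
Your proof is correct and follows the paper's argument: atomicity forces every component of $G\sm\{a,b\}$ to see both $a$ and $b$, which gives the paths, and a third component would yield an $ab$-theta. You are slightly more careful than the paper in two places. You handle a component with no neighbour in $\{a,b\}$ via the empty clique separator, and you build the chordless $aZb$-paths explicitly as shortest paths.
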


\begin{proof}
  Let $X$ and $Y$ be connected components of $G\sm\{a,b\}$. There exists  an $aXb$-path (resp.\ an $aYb$-path) in $G$ for otherwise $a$ or $b$ is a clique-separator of $G$, a contradiction to $G$ being atomic. 
  If $G\sm\{a,b\}$ has a third component~$Z$, then an  $aXb$-path, an $aYb$-path and an $aZb$-path form an $ab$-theta, a contradiction.
\end{proof}

In view of Lemma~\ref{l:eligible2K1}, a 2-separator $\{a, b\}$ of some graph $G$ is \emph{proper} if $G\sm \{a, b\}$ has exactly two components $X$ and $Y$ and $G[X \cup \{a, b\}]$ (resp.\ $G[\{a, b\} \cup Y]$) contains an $ab$-path $P$ (resp.\ $Q$) but is not equal to an $ab$-path, that is $V(P) \subsetneq X \cup \{a, b\}$ (resp.\ $V(Q) \subsetneq Y \cup \{a, b\}$).  We then call $(X, \{a, b\}, Y)$ a \emph{split of $G$ with respect to the proper 2-separator $\{a, b\}$}.

It is easy to check that basic graphs (see Lemma~\ref{l:decDaisy} below), thetas, triangles, wacs, even holes, prisms, even wheels and odd holes have no proper 2-separators (apart from triangles, they may have 2-separators, but for all such separators $\{a, b\}$, there exists at least one component $X$ of $G \sm \{a, b\}$ such that $G[X \cup \{a, b\}]$ is the unique $aXb$-path). 

A (theta, triangle, wac)-free graph $G$ with a proper 2-separator $\{a, b\}$ is represented on the left in Figure~\ref{fig:separators}. It is easy see that $G$ is not basic, has no clique separator, no proper $P_3$-separator (to be defined), and that $\{a, b\}$ is the unique proper 2-separator of~$G$.

\begin{figure}
    \centering
    \includegraphics{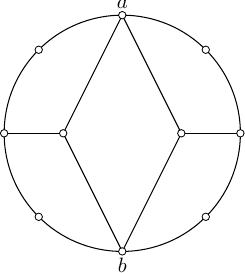}
    \rule{2em}{0ex}
    \includegraphics{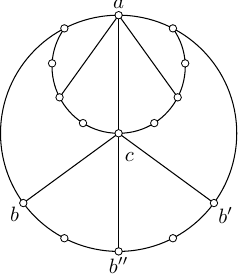}
    \caption{Graphs with separators}
    \label{fig:separators}
\end{figure}

We call \emph{blocks of decomposition} of $G$ with respect to a proper 2-separator the two graphs $G_X = G[X \cup \{a, b\} \cup Q]$ and $G_Y = G[P \cup \{a, b\} \cup Y ]$ where $Q$ (resp.\ $P$) is an $aXb$-path  (resp. $aYb$-path).
Note that $G_X$ (resp.\ $G_Y$) is not uniquely defined, it may depend on the choice of the path $Q$ (resp.\ $P$). Note that $G_X$ and $G_Y$ have both fewer vertices than $G$, because the 2-separator is proper.

\begin{lemma}
  \label{l:decProper2sepAtomic}
  When $(X, \{a, b\}, Y)$ is a split with respect to a proper 2-separator of a graph $G$, then $G$ is atomic if and only if both $G_X$ and $G_Y$ are atomic.
\end{lemma}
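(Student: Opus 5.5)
Both implications are proved in the contrapositive form ``a clique separator on one side yields a clique separator on the other''. Throughout, recall the setting. $(X,\{a,b\},Y)$ is a split, $a$ and $b$ are non-adjacent, and $G_X = G[X\cup\{a,b\}\cup Q]$ where $Q$ is an $aYb$-path. Since $ab\notin E(G)$, $Q$ has at least one internal vertex, and the internal vertices of $Q$ lie in $Y$ and induce a connected subgraph of $G_X$. Symmetrically, $G_Y = G[P\cup\{a,b\}\cup Y]$ with $P$ an $aXb$-path.

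\textbf{$G_X$ and $G_Y$ atomic implies $G$ atomic.} Suppose $K$ is a clique separator of $G$. Since $a$ and $b$ are non-adjacent, $K$ contains at most one of them. There are no edges between $X$ and $Y$, so $K\subseteq X\cup\{a,b\}$ or $K\subseteq Y\cup\{a,b\}$; by symmetry assume the former. I claim $K$ is a clique separator of $G_X$.
\begin{itemize}
\item In $G\sm K$, the set $(Y\cup\{a,b\})\sm K$ is connected. Indeed, $Y$ is connected and $a$, $b$ each have a neighbour in $Y$, namely their neighbours on the $aYb$-path that exists by definition of a proper 2-separator.
\item Likewise, in $G_X\sm K$ the set $(Q\cup\{a,b\})\sm K$ is connected.
\item Now take two vertices $u,v$ of $G_X\sm K$ lying in different components of $G\sm K$, chosen inside $X\cup\{a,b\}\cup Q$. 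This is possible because every component of $G\sm K$ meets $X\cup\{a,b\}$: a component avoiding it would lie inside $Y$, yet be attached to $a$ or $b$ (not both can lie in $K$).
\item Suppose $u$ and $v$ were joined by a path in $G_X\sm K$. Each maximal segment of that path inside $Q\sm\{a,b\}$ has its ends among $a,b$ outside $K$. Such a segment can be replaced by a path through $Y$, since $(Y\cup\{a,b\})\sm K$ is connected. This gives a $uv$-path in $G\sm K$, a contradiction.
\end{itemize}
The case $K\subseteq Y\cup\{a,b\}$ is symmetric with $G_Y$.

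\textbf{$G$ atomic implies $G_X$ and $G_Y$ atomic.} Suppose $K$ is a clique separator of $G_X$; we show $K$ is also a clique separator of $G$. Again $K$ contains at most one of $a,b$.

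Case 1: $K$ contains an internal vertex of $Q$. Since $K$ is a clique and the internal vertices of $Q$ have neighbours only in $Q\cup\{a,b\}$ within $G_X$, we get $K\subseteq Q$.
\begin{itemize}
\item If $K$ contains exactly one of $a,b$, say $a$, then $G_X\sm K$ has $Q\sm K$ splitting off at most a piece containing $b$'s side. The remainder $X\cup\{b\}$ together with the rest of $Q$ connects to $b$. A disconnection therefore forces $(X\cup\{a,b\})\sm K$ itself to be disconnected, or $b$ to be separated from $X$. Both can be ruled out using the $aXb$-path $P$ in $G$ and the assumption that $G$ is atomic.
\item If $K$ consists only of internal vertices of $Q$ (one vertex or one edge), then $G_X\sm K$ is connected. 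Indeed, $X\cup\{a,b\}$ is connected via $P$, and each remaining piece of $Q$ attaches to $a$ or $b$. This contradicts $K$ being a separator, so this sub-case cannot occur.
\end{itemize}

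Case 2: $K\subseteq X\cup\{a,b\}$. Then $K$ is also a clique of $G$. The components of $G_X\sm K$ correspond to those of $G\sm K$, with $Q$ replaced by $Y$: reverse the path-rerouting argument above, using that $Q\sm K$ and $Y\sm K$ connect exactly the same members of $\{a,b\}\sm K$. So $K$ separates $G$.

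The main obstacle is the careful case analysis when $K$ meets $Q$. There the small degenerate cases, such as $K=\{a,q\}$ with $q$ the neighbour of $a$ on $Q$, must be pushed back to a clique separator of $G$. In each such case I would exhibit either $\{a\}$ or $K\cap(X\cup\{a,b\})$ as a clique separator of $G$, using the existence of the $aYb$-path and properness of the split. The argument for $G_Y$ is symmetric.
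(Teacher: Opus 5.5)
Your proposal is correct and follows essentially the paper's argument: in both directions you confine the clique to one side of $\{a,b\}$ and transfer it between $G$ and the block by rerouting segments through $Y$ or through the interior of $Q$. One small tidy-up: cliques inside $Q$ such as $\{a\}$ or $\{a,q\}$ never separate $G_X$ at all, since $X\cup\{b\}$ is connected and $Q\setminus\{a,q\}$ is a path through $b$. Your closing paragraph about pushing them back to a clique separator of $G$, and the appeal to atomicity of $G$ there, are therefore unnecessary.
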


\begin{proof}
  Suppose first that $G$ is non-atomic.   
  So there exists  a clique separator $K$ of $G$. 
  Up to symmetry, $K\subseteq X\cup \{a\}$.  
  If two connected components of $G\sm K$ have a non-empty intersection with $Y\cup \{b\}$, then $\{a\}$ is a clique separator of $G_Y$.  
  Otherwise, $Y\cup \{b\}$ is included in some connected component of $G\sm K$.   
  It follows that $K$ is a clique separator of $G_X$.  
  So, one of  $G_X$ or $G_Y$ is non-atomic.   

  Suppose conversely that one of $G_X$ or $G_Y$ is non-atomic, say $G_X$ up to symmetry.  
  So there exists a clique separator $K$ of $G_X$.  
  If $K\cap X = \emptyset$, then one of $\{a\}$ or $\{b\}$ is a clique separator of $G$. 
  Otherwise, up to symmetry, $K\subseteq X\cup \{a\}$ and $Q\sm a$ is included in some connected component of $G\sm K$.   
  It follows that $K$ is a clique separator of $G$, so $G$ is non-atomic.
\end{proof}

\begin{lemma}
 \label{l:decProper2sep}
 When $(X, \{a, b\}, Y)$ is a split with respect to a proper
 2-separator of a graph $G$, then $G$ is theta-free (resp.\ triangle-free, wac-free, turtle-free, c-wac-free, even-hole-free, prism-free, even-wheel-free, bipartite) if and only if both $G_X$ and $G_Y$ are theta-free (resp.\ triangle-free, wac-free, turtle-free, c-wac-free, even-hole-free, prism-free, even-wheel-free, bipartite).
\end{lemma}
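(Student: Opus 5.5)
The forward direction is immediate: $G_X$ and $G_Y$ are induced subgraphs of $G$. Here we use that $Q$ is an $aYb$-path, so $G_X = G[X\cup Y'\cup\{a,b\}]$ with $Y'\subseteq Y$. (The paper's naming has $Q$ as an $aXb$-path, but for $G_X$ to be a proper block it must be the path through $Y$; we read it that way.) So every hereditary property passes from $G$ to the blocks, and all the listed properties are defined by excluding induced subgraphs. For "bipartite", we view it as (triangle, odd hole)-free, exactly as in the proof of Lemma~\ref{l:decClique}.

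For the converse, we argue structure by structure. Suppose $G$ contains an induced copy $H$ of one of the forbidden structures: a theta, triangle, wac (turtle or c-wac), even hole, prism, even wheel or odd hole. We show that some block contains a structure of the same type.

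If $V(H)\subseteq X\cup\{a,b\}$ or $V(H)\subseteq Y\cup\{a,b\}$, then $H$ lies in $G_X$ or $G_Y$ and we are done. Otherwise $H$ meets both $X$ and $Y$, so $\{a,b\}\cap V(H)$ separates $H$. Each of these structures has no clique separator (as observed before Lemma~\ref{l:decClique}), so $a,b\in V(H)$ and $\{a,b\}$ is a 2-separator of $H$.

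The key observation, already stated for these structures, is that whenever $\{a,b\}$ is a 2-separator of $H$, some component of $H\sm\{a,b\}$, say $H\cap Y$ up to symmetry, induces together with $a,b$ just an $ab$-path $R$. We then replace $R$ by the $aYb$-path $Q$ of $G_X$. The graph $H' = (H\sm R)\cup Q$ lies in $G_X$, since $H\sm Y\subseteq X\cup\{a,b\}$. There are no edges between $Q\sm\{a,b\}$ and $X$, so $H'$ is induced and is obtained from $H$ by changing the length of one $ab$-path. Also, $R$ and $Q$ both have length at least $2$, since $a$ and $b$ are non-adjacent.

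It remains to check, type by type, that $H'$ is still forbidden. A triangle is never split by a non-adjacent pair, so that case cannot arise. Thetas, prisms and wheels are preserved under changing the length of such a path, provided the adjacency pattern of the centres is unchanged, and it is, because centres adjacent to internal vertices of $R$ would lie in $Y$.

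The main obstacle is the parity-sensitive cases: even holes, odd holes and even wheels. Here the lengths of $R$ and $Q$ may have different parities. To handle this, we use the $aXb$-path $P$ available in $G_Y$. If $R\cup Q$ is a hole (or $R$ is replaced by $Q$ and the parity flips), then one of the holes $R\cup Q$, $P\cup Q$, $P\cup R$ has the right parity. More carefully, we compare the parity of $Q$ with that of $R$:

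\begin{itemize}
\item If the parities agree, $H'$ works.
\item If they differ, the hole $R\cup Q$ is odd, which settles the bipartite case directly: it lies in $G_Y$ if $R$ and $Q$ are both taken inside $Y$, and otherwise we use $P$.
\item For the even hole and even wheel cases, we form $P\cup R$ or $P\cup Q$. Two of the three paths $P,Q,R$ have the same parity, giving an even hole in a block. For even wheels, the centre's neighbours are kept, so parity is preserved when the replaced path is a sector.
\end{itemize}

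I expect this parity bookkeeping, especially for even wheels, to be the delicate step. Some sub-cases may need to use thetas formed by $P$, $Q$ and $R$ instead.

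For turtles versus c-wacs, we note that the replacement preserves which sectors contain which neighbours, so the type is preserved.
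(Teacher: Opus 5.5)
Your route is the paper's. Heredity gives the forward direction. For the converse, the absence of clique separators forces $a,b\in V(H)$, and non-properness of $\{a,b\}$ in $H$ makes one side of $H$ an $ab$-path $R$. You swap $R$ for the marker path $Q$, and the other marker path $P$ repairs parity. You are also right that $Q$ must be the $aYb$-path (the paper's definition of the blocks has $X$ and $Y$ swapped).

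\textbf{The bipartite bullet fails as written.} When $|R|\not\equiv|Q|\pmod 2$ you claim $R\cup Q$ is an odd hole. But $R$ and $Q$ are two arbitrary $aYb$-paths that may share internal vertices or have edges between their interiors, so $R\cup Q$ need not be a hole. The fallback ``otherwise we use $P$'' is not an argument. The correct step is the comparison you already make for even holes, and it settles both parity cases at once. Since $P\setminus\{a,b\}\subseteq X$ is anticomplete to $Y$ and $ab\notin E(G)$, both $P\cup Q\subseteq G_X$ and $P\cup R\subseteq G_Y$ are holes. If $|Q|\not\equiv|R|\pmod 2$ they have opposite parities, so one is an even hole and the other an odd hole, each lying in a block. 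This is exactly the paper's argument.

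\textbf{Even wheels are not parity-sensitive here.} The delicate step you anticipate, and any thetas built from $P,Q,R$, are unnecessary, because evenness of a wheel counts the rim neighbours of its centre, not path lengths. The centre lies in $X$:
\begin{itemize}
\item It is not an interior vertex of $R$, since those have only two neighbours in $H$.
\item It is not $a$ or $b$. If $a$ were the centre, its neighbour on $R$ would be a rim vertex all of whose neighbours in $H$ lie on $R$, so it would have only one rim neighbour.
\end{itemize}
Hence the centre has no neighbour in the interior of $R$ or of $Q$, and the swap leaves its number of rim neighbours unchanged. This also repairs your justification ``centres adjacent to internal vertices of $R$ would lie in $Y$'', which forgets that $a$ and $b$ are themselves adjacent to internal vertices of $R$. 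The same check handles the centres of wacs, and hence the turtle versus c-wac distinction.

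\textbf{Theta with ends $a,b$.} When $H$ is a theta whose ends are $a$ and $b$, $H\setminus\{a,b\}$ has three components. So $R$ must be taken on the side containing only one of them; the paper glosses over this too.
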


\begin{proof}
  If $G$ is theta-free (resp.\ triangle-free, wac-free, turtle-free, c-wac-free, even-hole-free, prism-free, even-wheel-free, bipartite) then $G_X$ and $G_Y$ are theta-free (resp.\ triangle-free, wac-free, turtle-free, c-wac-free, even-hole-free, prism-free, even-wheel-free, bipartite) since they are induced subgraphs of $G$.

  Let us now consider the converse claims.  We view ``bipartite'' as ``(triangle, odd hole)-free'', so each claim is about excluding some structure. For each claim, we assume that both $G_X$ and $G_Y$ do not contain the object $H$ under consideration, suppose that $G$ does, and look for a contradiction. 

  Since $H$ has no clique-separator and no proper 2-separator, $H$ is in $G$ and in none of $G_X$ and $G_Y$, $\{a, b\}$ must be a (non proper) 2-separator of~$H$. 
  In particular, $H$ cannot be a triangle. Up to the symmetry between $X$ and~$Y$, we may therefore assume that $H\cap (Y\cup \{a, b\})$ is an $aYb$-path~$R$.  Hence replacing  $R$ by $Q$ in $H$ yields a theta (resp.\ wac, turtle, c-wac, hole, prism, even-wheel) whenever $H$ is a theta (resp.\ wac, turtle, c-wac, hole, prism, even-wheel) of $G_X$.
  
  To obtain a contradiction in all cases, it remain to check that $Q$ and $R$ have the same parity for the case of even and odd holes.  This holds because if their parity differ, then the union of one of them with $P$ yields a hole of $G_X$ with the forbidden parity. 
\end{proof}

A graph is \emph{superatomic} if it is atomic and has no proper 2-separator.

\subsection*{$P_3$-separator}

When $acb$ is a path in some graph $G$ such that $G\sm acb$ is not connected, $acb$ is a \emph{$P_3$-separator} of $G$.

\begin{lemma}
  \label{l:eligibleP3}
   If a  theta-free graph $G$ contains a $P_3$-separator $acb$, then $G\sm acb$ has exactly two connected components $X$ and $Y$ such that $G$ contains an $aXb$-path and an $aYb$-path, and every $aYb$-path of $G$ contains at least one internal vertex adjacent to~$c$. Moreover, if none of $X\cup \{a, b\}$ and $Y\cup \{a, b\}$ induces an $ab$-path, then  $c$ has neighbours in both $X$ and~$Y$.
\end{lemma}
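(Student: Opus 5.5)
The plan is to argue exactly as in Lemma~\ref{l:eligible2K1}, producing thetas from paths in different components. As there, the existence of $aXb$- and $aYb$-paths needs $G$ to have no clique separator, so I work in the setting where the lemma is applied: $G$ atomic, and $acb$ a path, so that $a$ and $b$ are non-adjacent.

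\emph{Step 1: every component has an $a$--$b$ path through it.} Let $Z$ be a component of $G\sm acb$. If $N(Z)$ misses $a$, then $N(Z)\subseteq\{c,b\}$, which is a clique separating $Z$ from the rest. The same holds if $N(Z)$ misses $b$. Atomicity rules both out, so $Z$ has neighbours at both $a$ and $b$, and there is an $aZb$-path.

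\emph{Step 2: exactly two components.} Suppose there were three components $X,Y,Z$. An $aXb$-path, an $aYb$-path and an $aZb$-path each have length at least~2, since $ab\notin E(G)$. They are internally disjoint and have no edges between their interiors, because they lie in distinct components. So together they form an $ab$-theta, a contradiction. Hence there are exactly two components.

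\emph{Step 3: labelling $X$ and $Y$.} Say that a path is \emph{$c$-free} if none of its internal vertices is adjacent to $c$. Suppose both components contain a $c$-free $a$--$b$ path. Those two paths together with $acb$ form an $ab$-theta, again a contradiction. So at most one component contains a $c$-free $a$--$b$ path. I name that component $X$ if it exists, and name the components arbitrarily otherwise. Then every $aYb$-path has an internal vertex adjacent to $c$.

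\emph{Step 4: the ``moreover'' part, which I expect to be the main obstacle.} Assume $c$ has no neighbour in one side, say $Z\in\{X,Y\}$, and that $G[Z\cup\{a,b\}]$ is not an $ab$-path. Let $W$ denote the other side.
\begin{itemize}
\item Every $aZb$-path is $c$-free. So if two internally disjoint $aZb$-paths existed, they would form a theta together with $acb$.
\item By Menger, some vertex $x\in Z$ therefore lies on every $aZb$-path.
\item Since $G[Z\cup\{a,b\}]$ is not a single path, I would locate a ``branch'': a hole $C$ in $G[Z\cup\{a,b\}]$ through $x$ and one of $a,b$, say $a$.
\item To get $C$, I would use atomicity. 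A piece of $G[Z\cup\{a,b\}]$ hanging off a single vertex of a shortest $aZb$-path would give a cut vertex, and since $G$ is triangle-free, a clique separator in $G$. So some extra vertex attaches to the path in two non-adjacent places, and this yields a hole $C$ through $x$ and $a$ or $b$, with $x$ the cut vertex.
\item Then take a shortest path $bRx$ inside the part of $Z\cup\{b\}$ on the other side of $x$. The two arcs of $C$ between $a$ and $x$, together with the path $a\,c\,b\,R\,x$, form an $ax$-theta.
\item This path uses no vertex of $W$, and it is induced because $c$ has no neighbour in $Z$. So we get a contradiction.
\end{itemize}

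The delicate points are choosing $C$ and $R$ so that the three paths are induced with no edges between them, and checking that the branch really produces a hole rather than a clique separator. I would first try taking $C$ and $R$ minimal, and use triangle-freeness to exclude short chords.
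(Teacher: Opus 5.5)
\textbf{Verdict: Step~4 has a genuine gap, and the claim it aims at is false.}

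Your Steps 1--3 coincide with the paper's proof, including its tacit use of atomicity. The gap is Step~4, and it cannot be repaired along the lines you propose.

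Its first bullet is false. Two internally disjoint $c$-free $aZb$-paths need not form a theta with $acb$, because there may be edges between their interiors. So Menger gives you no cut vertex.

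Worse, the claim Step~4 targets is false, even for triangle-free graphs (which you assume, although the lemma does not). That claim is: if $c$ has no neighbour in $Z$, then $G[Z\cup\{a,b\}]$ is an $ab$-path, derived using only $Z$ and $acb$. A counterexample:
\begin{itemize}
\item Let $Z=\{p,q_1,q_2,q_3\}$ with edges $ap,pb,aq_1,q_1q_2,q_2q_3,q_3b,pq_2$.
\item Let the other side be the path $aw_1w_2w_3b$, and let $c$ be adjacent to exactly $a,b,w_2$.
\item This graph is two triangle-free 3-spoke wheels glued along the proper 2-separator $\{a,b\}$. The rims are $aq_1q_2q_3bca$ and $aw_1w_2w_3bpa$, with centres $p$ and $c$.
\item So by Lemmas~\ref{l:decProper2sepAtomic} and~\ref{l:decProper2sep} it is atomic and (theta, triangle, wac)-free.
\item Here $acb$ is a $P_3$-separator and $c$ has no neighbour in $Z$, yet $G[Z\cup\{a,b\}]$ is not a path.
\item The paths $apb$ and $aq_1q_2q_3b$ are internally disjoint and $c$-free, but the edge $pq_2$ prevents a theta.
\end{itemize}

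\textbf{What the argument needs instead.} The other side has to be used, and the paper does this in one line.
\begin{itemize}
\item If $c$ has no neighbour in $X$, then $X$ and $Y\cup\{c\}$ are the components of $G\sm\{a,b\}$, so $\{a,b\}$ is a 2-separator.
\item It is proper: $G[X\cup\{a,b\}]$ is not a path by hypothesis, and $G[Y\cup\{a,b,c\}]$ contains both $acb$ and an $aYb$-path.
\item The paper calls this a contradiction. That is, it tacitly assumes, besides atomicity, that $G$ has no proper 2-separator.
\end{itemize}
That hypothesis cannot be dropped. Add to the example above a path $xky$ with $cx,kw_2,yw_1\in E(G)$. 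The second block becomes a daisy, with hole $cw_2w_1a$ and petals $xky$ and $pbw_3$. So the graph is still atomic and (theta, triangle, wac)-free, neither side is now an $ab$-path, and $c$ still has no neighbour in $Z$.

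The right fix is therefore to add the hypothesis ``$G$ has no proper 2-separator'' and argue as the paper does, rather than look for a theta inside $Z\cup\{a,b,c\}$.
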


\begin{proof}
  Let $X$ and $Y$ be components of $G\sm acb$. Since $G$ is atomic, both $a$ and $b$ must have a neighbour in $X$ for otherwise $G$ has a clique separator (contained in $\{a,~c,~b\}$).  Hence $G[X\cup\{a,b\}]$ is connected and there exists an $aXb$-path in $G$. Similarly, there exists an $aYb$-path in $G$.
  
  If $G\sm acb$ has a third connected components $Z$, then an $aXb$-path, an $aYb$-path and an $aZb$-path would form a theta of $G$, a contradiction. 
  
  Now suppose that exists  an $aXb$-path $P$ and an $aYb$-path $Q$ path that both have no internal vertex adjacent to $c$. Then, $P$, $Q$ and $acb$ form a theta, a contradiction. So, up to a swap of the names $X$ and $Y$, every $aYb$-path has at least one internal vertex adjacent to $c$.
  
  If $c$ has no neighbours in $X$, then $\{a, b\}$ is a  2-separator of~$G$, which is proper (because of the assumption that none of $X\cup \{a, b\}$ and $Y\cup \{a, b\}$ is an $ab$-path), a  contradiction. 
\end{proof}

In view of Lemma~\ref{l:eligibleP3}, when $acb$ is a $P_3$-separator, and $X$ is a connected component of $G\sm acb$, we say that $X$ is \emph{loose} if there exists an $aXb$-path with no internal vertex adjacent to $c$, and \emph{tight} otherwise.
A $P_3$-separator $acb$ of some graph $G$ is \emph{proper} if $G\sm acb$ has exactly two connected components $X$ and $Y$,  there exists an $aXb$-path and an $aYb$-path in $G$, $c$ has neighbours in both $X$ and $Y$, $X$ is loose, $Y$ is tight, and none of $G[X \cup \{a, b\}]$ and $G[Y \cup \{a, b\}]$ is an $ab$-path.  We then call $(X, acb, Y)$ a \emph{split of $G$ with respect to the $P_3$-separator $acb$}.

Note that Lemma~\ref{l:eligibleP3} does not guaranty that $X$ is loose, but the separators that we will find in proofs will still turn out to be proper.  
A graph~$G$ with no proper 2-separator and a proper $P_3$-separator $acb$ (or $acb'$) is represented on the right in Figure~\ref{fig:separators}.  Observe that $acb''$ is not a proper $P_3$-separator because no component of $G\sm acb''$ is loose. 

The triangle clearly has no $P_3$-separator. An $ab$-theta might have a $P_3$-separator, but only if one of the $ab$-paths has length~2, and then the separator is a path $acb$ of the theta, and it is not proper.  It is easy to check that all $P_3$-separators of a wac $W= (H, c, c')$ are of the form $acb$ (or $ac'b$) where $a, b\in H$, and one component of $W\sm acb$ (or $W\sm ac'b$) is a subpath of some sector of $(H, c')$ (or $(H, c)$).  So a wac has no proper $P_3$-separator. 

We call \emph{blocks of decomposition} of $G$ with respect to a proper $P_3$-separator the two graphs $G_X = G[X \cup acb \cup Q]$ and $G_Y = G[P \cup acb \cup Y ]$ where $P$ is an $aXb$-path with no internal vertex adjacent to $c$, and $Q$ is any $aYb$-path.

\begin{lemma}
 \label{l:decProperP3SepAtomic} 
  When $(X, acb, Y)$ is a split with respect to a proper $P_3$-separator of a graph $G$, then $G$ is atomic if and only if both $G_X$ and $G_Y$ are atomic.
\end{lemma}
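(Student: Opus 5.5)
I will mirror the proof of Lemma~\ref{l:decProper2sepAtomic}, now with the separator $acb$ and the blocks $G_X = G[X\cup acb\cup Q]$ and $G_Y = G[P\cup acb\cup Y]$. Here $P$ is an $aXb$-path with no internal vertex adjacent to $c$, and $Q$ is an $aYb$-path. Both directions rely on one observation. In $G_X$, the set $Q\sm\{a,b\}$ is a nonempty connected set (it is nonempty because $ab\notin E(G)$, since $acb$ is a path) attached to both $a$ and $b$, and it plays the role of $Y$. Symmetrically, $P\sm\{a,b\}$ plays the role of $X$ in $G_Y$. A clique $K$ is a vertex or an edge, since any clique meeting both sides must lie inside $acb$.

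\emph{$G$ non-atomic implies a block is non-atomic.} Let $K$ be a clique separator of $G$. Since there are no edges between $X$ and $Y$, $K \subseteq X\cup acb$ or $K\subseteq Y\cup acb$. Suppose first $K\subseteq X\cup acb$. If $Y\cup(acb\sm K)$ lies in a single component of $G\sm K$, then some component $C$ of $G\sm K$ is contained in $X$. Replacing $Y$ by $Q$ preserves the adjacencies from $Q$ into $acb\sm K$, so $C$ is still separated from the rest, and $K$ is a clique separator of $G_X$. Otherwise $Y\cup(acb\sm K)$ is split. Then $K\cap acb$ separates some vertex of $Y$ from some other vertex of $Y\cup acb$. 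This is a clique separator of $G[Y\cup acb]$, and since $P$ attaches only to $a$ and $b$ (not to $c$), it is also a clique separator of $G_Y$. The delicate subcase is when $K\cap acb$ contains neither $a$ nor $b$ and only $c$ is removed. Then $a$ and $b$ are joined through $P$, and I must check that the component containing the rest is unchanged. The case $K\subseteq Y\cup acb$ is symmetric, with $Q$ attached to $a$, $b$ and possibly $c$.

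\emph{A block non-atomic implies $G$ non-atomic.} Let $K$ be a clique separator of $G_X$. If $K\cap X=\emptyset$, then $K\subseteq acb\cup Q$, and $K$ is contained in a path of length $\le 2$. I will argue that a separating clique of this form gives a clique separator of $G$: the replaced $Q$-part either forces $a$ or $b$ (or $\{a,c\}$, $\{c,b\}$) to cut off a piece of $X$. Such a piece is also cut off in $G$, because $Y$ attaches only through $acb$. If instead $K$ meets $X$, then $K\subseteq X\cup\{a,c\}$ or $X\cup\{c,b\}$, say the first up to symmetry. Then $Q\sm K$ together with $b$ lies in one component, and replacing $Q$ by $Y$ keeps it one component, because $Y\cup\{b\}$ is connected ($b$ has a neighbour in $Y$). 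So $K$ separates $G$. The case of $G_Y$ is analogous, with $P$ replaced by $X$. This is actually easier, since $P$'s interior has no neighbour of $c$, so the two sides of the block match $G$ more closely.

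The main obstacle is the bookkeeping around $c$. Unlike the 2-separator case, the vertex $c$ has neighbours inside $Y$ but none inside $P$, and $Q$ may or may not contain neighbours of $c$. So the claim ``the replaced side lies in one component of $G\sm K$'' must be verified separately when $c\in K$ and $c\notin K$. I will handle this by noting that $X$ and $Y$ are connected and each is attached to both $a$ and $b$ (this holds by properness), so removing a clique of at most two vertices of $acb$ can never split $X\cup\{a,b\}\sm K$ internally across the separator.
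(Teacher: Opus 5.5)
Your plan is the paper's: place the clique on one side of $acb$, and use the fact that the replaced part ($Y$ versus $Q$, or $X$ versus $P$) stays inside one component. The first branch of your first direction is essentially fine, and so is the case where $K$ meets $X$ in the second direction.

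The gap is the case $K\cap X=\emptyset$ for a clique separator $K$ of $G_X$, and its analogue in $G_Y$. The mechanism you announce is that $K\cap acb$ cuts off a piece of $X$, which then stays cut off in $G$. This cannot occur, and nothing you wrote would produce a separator of $G$. What is true is that this case is empty. $X$ is connected and misses $K$, and both $a$ and $b$ have neighbours in $X$. Also $K$ contains at most one of $a,b$ (as $ab\notin E(G)$), and $c$ is adjacent to both. So $X\cup(acb\sm K)$ lies in a single component of $G_X\sm K$; this is essentially the observation of your last paragraph. Any other component would be a subpath $q_i\dots q_j$ of the interior of $Q=aq_1\dots q_{m-1}b$. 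Its neighbours in $G_X$ are $q_{i-1}$, $q_{j+1}$ (with $q_0=a$, $q_m=b$) and possibly $c$. So both $q_{i-1}$ and $q_{j+1}$ would have to lie in $K$, although they are non-adjacent since $Q$ is induced. A piece of $Q$ being cut off is the only conceivable way this case could arise, and it is exactly what your sketch omits. The same argument with $P$ handles $G_Y$. The paper is equally terse here: it asserts that $\{a,c\}$ or $\{c,b\}$ separates $G$, which holds only vacuously.

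The same observation empties your ``otherwise'' branch in the first direction. If $K\subseteq X\cup acb$, then $Y\cup(acb\sm K)$ is always connected in $G\sm K$. This is because $Y$ is connected, $a$ and $b$ have neighbours in $Y$, and $c$ is adjacent to whichever of $a,b$ survives. So the ``delicate subcase'' you leave open never has to be checked, and neither does your claim that $K\cap acb$ separates $G_Y$.

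Finally, your remark that every clique is a vertex or an edge is false for an arbitrary graph $G$, which is the generality of the lemma. For example, there may be a triangle inside $X$, or $c$ may form a triangle with two consecutive vertices of $Q$. So ``contained in a path of length $\le 2$'' is also wrong. You never need a size bound, though. The arguments only use that a clique contains at most one of $a,b$, and cannot meet both $X$ and $Y$ (in $G_X$, both $X$ and the interior of $Q$).
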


\begin{proof}
  Suppose first that $G$ is non-atomic.   
  So there exists  a clique separator $K$ of $G$. 
  Up to symmetry, either $K\subseteq X \cup \{a, c\}$ or $K\subseteq Y \cup \{a, c\}$.  
  First consider the case when $K\subseteq X \cup \{a, c\}$. 
  If two connected components of $G\sm K$ have a non-empty intersection with $Y\cup \{b\}$, then $\{a, c\}$ is a clique separator of $G_Y$.  
  Otherwise, $Y\cup \{b\}$ is included in some connected component of $G\sm K$.   
  It follows that $K$ is a clique separator of $G_X$.  So, one of  $G_X$ or $G_Y$ is non-atomic.   
  The case when $K\subseteq Y \cup \{a, c\}$ is similar (though not formally symmetric, since $X$ is loose and $Y$ is tight).
  
  Suppose now that  $G_X$ is non-atomic.  
  So there exists a clique separator $K$ of $G_X$.  
  If $K\cap X = \emptyset$, then one of $\{a, c\}$ or $\{c, b\}$ is a clique separator of~$G$. 
  Otherwise, up to symmetry, $K\subseteq X\cup \{a, c\}$ and $Q\sm a$ is included in some connected component $Z$ of $G_X\sm K$, and there is another component $Z'$. It follows that $K$ is a clique separator of~$G$, separating $Z'$ from $Y\sm \{a, c\}$, so $G$ is non-atomic.
  
  The case when $G_Y$ is non-atomic is similar.  
\end{proof}

\begin{lemma}
 \label{l:decProperP3Sep-2Sep} 
  When $(X, acb, Y)$ is a split with respect to a proper $P_3$-separator of a graph $G$, then $G$ is superatomic if and only if both $G_X$ and $G_Y$ are superatomic.
\end{lemma}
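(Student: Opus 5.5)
By Lemma~\ref{l:decProperP3SepAtomic}, $G$ is atomic if and only if both $G_X$ and $G_Y$ are atomic. So it suffices to prove the following, assuming throughout that $G$, $G_X$ and $G_Y$ are atomic: $G$ has a proper 2-separator if and only if $G_X$ or $G_Y$ has one. I would prove each direction by transporting a separator between $G$ and a block. The basic tool is that in $G_X$, the path $Q$ together with $c$ plays the role of $Y$ (and symmetrically for $P$ in $G_Y$), and $acb$ is a separator of each block.

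\emph{From $G$ to a block.} Let $\{u,v\}$ be a proper 2-separator of $G$ with split $(A,\{u,v\},B)$. Since $c$ has neighbours in both $X$ and $Y$, and $a,b$ have neighbours in both, the set $X\cup\{c\}\cup Y$ minus $\{u,v\}$ is "glued" through $acb$. I would split into cases according to where $u,v$ lie. The main case is $\{u,v\}\subseteq X\cup\{a,b,c\}$. Then $Y\setminus\{u,v\}$, together with whatever of $a,c,b$ survives, lies in one side, say $B$. I then claim that $\{u,v\}$ is a proper 2-separator of $G_X$. Indeed, replacing $Y$ by $Q$ keeps $A$ intact. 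The $uv$-path through $A$ and the non-path condition for $A$ are unchanged. For the $B$-side, $B$ is replaced by $(B\setminus Y)\cup (Q\setminus\{a,b\})$, and I need this side not to be a bare path. Here I use that $c$ is adjacent to an internal vertex of $Q$ (by tightness of $Y$), or that $c$, $a$, $b$ still give branching, so $B$ stays non-path. The symmetric case $\{u,v\}\subseteq Y\cup\{a,b,c\}$ goes to $G_Y$, with $P$ in place of $X$. Here looseness of $X$ matters: $P$ has no internal neighbour of $c$, so I must argue non-pathness of the replaced side from the presence of $c$ adjacent to $a$ and $b$ as extra vertices. Finally, if $u\in X$ and $v\in Y$, then $acb$ minus $\{u,v\}$ connects everything except pieces cut off inside $X$ or inside $Y$. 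This forces one of $a,c,b$ to be a cut vertex together with $u$ alone, contradicting atomicity or the theta-freeness-type structure. I expect to rule this case out directly, using that $u,v$ are non-adjacent and that $G$ is atomic.

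\emph{From a block to $G$.} Suppose $G_X$ has a proper 2-separator $\{u,v\}$. If $\{u,v\}\cap X=\emptyset$, then $u,v\in Q\cup\{c\}$. Because $Q\cup\{c\}$ minus two vertices stays attached to $X$ through $a$ or $b$ (atomicity of $G_X$), this configuration should contradict properness. Otherwise I lift $\{u,v\}$ to $G$ by replacing $Q$ back with $Y$; the side containing $Q\setminus\{u,v\}$ only grows, so non-pathness is preserved. The $G_Y$ case is analogous with $P$ and $X$.

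The main obstacle is the bookkeeping of properness: showing that the side absorbing the replaced path is not merely a $uv$-path, and excluding the configurations where $u$ or $v$ lies in $\{a,c,b\}$ or on the substituted path. I would handle these first by listing the possible positions of $\{u,v\}$ relative to $\{a,c,b\}$, and use the adjacency of $c$ to both components as the witness of non-pathness.
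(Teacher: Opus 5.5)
Your plan is correct and is essentially the paper's proof. You reduce to the atomic case via Lemma~\ref{l:decProperP3SepAtomic}. You kill a crossing pair $u\in X$, $v\in Y$ by atomicity: the component of $G\setminus\{u,v\}$ missing the intact path $acb$ lies in $X$ or $Y$, so $u$ or $v$ alone separates it. You then move separators contained in $X\cup acb$ (resp.\ $acb\cup Y$) to $G_X$ (resp.\ $G_Y$) and back, and the non-path condition on the modified side is witnessed by the cycle formed by $acb$ and the marker path.

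One justification needs replacing, in the case $\{u,v\}\cap X=\emptyset$ of $G_X$. It is not true that everything stays attached to $X$: two vertices of $Q$ may cut off a subpath of the interior of $Q$ with no neighbour of $c$. But that side together with $u,v$ is then a $uv$-path, so the separator is not proper, and when $c\in\{u,v\}$ nothing is cut off. Separately, your lifting step skips the subcase $u\in X$, $v$ in the interior of $Q$; it is excluded by atomicity of $G_X$ exactly as in the crossing case.
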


\begin{proof}
  By Lemma~\ref{l:decProperP3SepAtomic}, it is enough to prove the following statement under the assumption that $G$, $G_X$ and $G_Y$ are atomic: $G$ has a proper 2-separator if and only if one of $G_X$ or $G_Y$ has a proper 2-separator.

  Suppose first that $G$ has a proper 2-separator $\{u, v\}$.  If $u\in X$ and $v\in Y$, then some component $C$ of $G\sm \{u, v\}$ is disjoint from $acb$, so $C\subseteq X$ or $C\subseteq Y$.  It follows that $u$ or $v$ is a clique separator of $G$, a contradiction to $G$ being atomic.  So, either $\{u, v\} \subseteq X \cup acb$ or $\{u, v\} \subseteq  acb \cup Y$. 
  
  If $\{u, v\} = \{a, b\}$, then since $c$ has neighbours in both $X$ and $Y$, the component $C$ of $G\sm \{u, v\}$ that contains $c$ overlaps $X$ and $Y$. The other component $D$ must be in $X$ or $Y$, and accordingly $\{a, b\}$ is a proper separator of $G_X$ or $G_Y$. Up to symmetry, we may assume from here on that $b\notin \{u, v\}$.

  Either $\{u, v\} \subseteq X \cup \{a, c\}$ or $\{u, v\} \subseteq Y \cup \{a, c\}$.  
  First consider the case when $\{u, v\}\subseteq X \cup \{a, c\}$. 
  If two connected components of $G\sm \{u, v\}$ have a non-empty intersection with $Y\cup \{b\}$, then $\{a, c\}$ is a  clique separator of $G_Y$, a contradiction.  
  Otherwise, $Y\cup \{b\}$ is included in some connected component of $G\sm \{u, v\}$.   
  It follows that $\{u, v\}$ is a proper 2-separator of~$G_X$.  
  The case when $ \{u, v\} \subseteq Y \cup \{a, c\}$ is similar (though not formally symmetric, since $X$ is loose and $Y$ is tight).

  Suppose now that  $G_X$ has a proper 2-separator $\{u, v\}$.  Since by definition of proper $P_3$ separators $G[X]$ is connected, $u$ and $v$ cannot both be in the marker path $Q$. 
  So, if $\{u, v\} \cap X = \emptyset$, then one of $\{a, c\}$ or $\{c, b\}$ is a clique separator of $G$, a contradiction to $G$ being atomic. 
  Otherwise, up to symmetry, $\{u, v\}\subseteq X\cup acb$ and the interior of $Q$ is included in some connected component of $G\sm \{u, v\}$. It follows that $\{u, v\}$ is a proper 2-separator of $G$.
  
  The case when $G_Y$ has a proper-2-separator is similar.  
\end{proof}

\begin{lemma}
  \label{l:decProperP3Sep}
  When $(X, acb, Y)$ is a split with respect to a proper $P_3$-separator of a graph $G$, then $G$ is theta-free (resp.\ triangle-free, wac-free, turtle-free, c-wac-free, even-hole-free, prism-free, even-wheel-free, bipartite) if and only if both $G_X$ and $G_Y$ are theta-free (resp.\ triangle-free, wac-free, turtle-free, c-wac-free, even-hole-free, prism-free, even-wheel-free, bipartite).
\end{lemma}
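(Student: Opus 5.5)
The forward direction is immediate: $G_X$ and $G_Y$ are induced subgraphs of $G$. For the converse I will follow the proof of Lemma~\ref{l:decProper2sep}. View ``bipartite'' as ``(triangle, odd hole)-free''. Let $H$ be the forbidden structure (theta, triangle, wac, turtle, c-wac, even or odd hole, prism, even wheel). Suppose $H$ is present in $G$ but in neither $G_X$ nor $G_Y$, and derive a contradiction.

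\textbf{Reduction to $H$ meeting both sides.} Since $H$ has no clique separator, it cannot meet both $X$ and $Y$ while using only the clique $\{a,c\}$ or $\{c,b\}$ of the separator. If $H$ avoids $Y$, then $H \subseteq G[X\cup acb]$, which is an induced subgraph of $G_X$, a contradiction; the case where $H$ avoids $X$ is symmetric. So $H$ meets both $X$ and $Y$. Then $H\cap acb$ separates $H$, and it is not a clique, so it is $\{a,b\}$ or $\{a,c,b\}$. By the remarks after the definitions, none of these structures has a proper 2-separator or a proper $P_3$-separator. Hence one side of $H$, say the part $R$ in $Y\cup\{a,b\}$ (resp.\ in $X\cup\{a,b\}$), must be degenerate.

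\textbf{Case $H\cap acb=\{a,b\}$.} Here $c\notin H$ and $\{a,b\}$ is a non-proper 2-separator of $H$, so one side of $H$ is an $ab$-path $R$ through $Y$ or through $X$.
\begin{itemize}
\item If $R$ is an $aYb$-path, replace $R$ by $Q$. This gives a copy of the same type of structure inside $G[X\cup\{a,b\}\cup Q]\subseteq G_X$, exactly as in Lemma~\ref{l:decProper2sep}.
\item If $R$ is an $aXb$-path, replace $R$ by the marker $P$. The result lies in $G_Y$. Since $c\notin H$, no adjacency with $c$ matters; we only need $P$ to be an induced path with the same attachments, namely only at $a$ and $b$. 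It does not matter that $P$ avoids $c$'s neighbours.
\end{itemize}

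\textbf{Case $acb\subseteq H$.} Then $acb$ is a non-proper $P_3$-separator of $H$. By the remarks on thetas and wacs, one component of $H\sm acb$ is a path $R$, which we replace by the marker path. Since $H$ is then a theta or a wac, this case does not arise for holes or triangles, and for prisms and even wheels one checks $acb$ directly, as for wacs.
\begin{itemize}
\item If $R\subseteq Y$, replace it by $Q$.
\item If $R\subseteq X$, replace it by $P$.
\end{itemize}
This is the step I expect to be the main obstacle. The replacement must preserve the adjacencies of $c$: the number of neighbours of $c$ on the new path matters for wheels, wacs and their centre counts and parities. In a theta with a path $acb$, the degenerate side $R$ has no internal neighbour of $c$.
\begin{itemize}
\item If $R\subseteq X$, $P$ also has none, so the replacement works in $G_Y$. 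This is exactly why the loose side supplies $P$.
\item If $R\subseteq Y$, then $R$ would be an $aYb$-path with no internal neighbour of $c$, contradicting tightness.
\end{itemize}
For a wac $(H',c,c')$ with separator $ac'b$, the degenerate component is a subpath of a sector of $(H',c')$. We replace this segment through $a,b$ by a marker path and check case by case that the centres keep at least three neighbours and that the crossing or turtle type, and even-wheel parity, are preserved. Tightness or looseness again decides which marker is compatible. For parity of holes, as in Lemma~\ref{l:decProper2sep}, I will argue that the two replaced paths have the same parity, since otherwise the other marker together with one of them gives a hole of the forbidden parity inside a block.
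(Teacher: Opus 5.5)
Your strategy is the paper's: reduce to $H$ meeting both $X$ and $Y$, note that $H\cap acb$ is $\{a,b\}$ or $acb$ and is a non-proper separator of $H$, so that one side of $H$ is an $ab$-path $R$, and substitute the marker. The case $c\notin H$ and the theta case are fine. In fact, for a theta containing $acb$, both components of $H\sm acb$ are $ab$-paths with no neighbour of $c$, so the one in $Y$ already contradicts tightness.

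\textbf{The gap is the step you flag as the main obstacle.} When $c$ is a centre of $H$ (an even wheel centred at $c$, or a wac, turtle or c-wac with $c$ as a centre), you only say that centre counts and even-wheel parity are checked ``case by case''. No inspection of $H$ can give the parity, because $R$ and the marker may carry different numbers of neighbours of $c$. What forces agreement is the hypothesis on the blocks, via the comparison trick you use for holes, which the paper also applies to $c$-parity. Let $n(S)$ be the number of interior vertices of an $ab$-path $S$ adjacent to $c$, and recall that both markers lie in both blocks.
\begin{itemize}
\item If $R\subseteq Y\cup\{a,b\}$: the graphs $(aPbRa,c)$ and $(aPbQa,c)$ are wheels of $G_Y$, since $n(P)=0$ while $n(R),n(Q)\geq 1$ by tightness. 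So $n(R)\equiv n(Q) \pmod 2$, or $G_Y$ contains an even wheel.
\item If $R\subseteq X\cup\{a,b\}$: compare $(aRbQa,c)$ with $(aPbQa,c)$ in $G_X$ to get $n(R)\equiv n(P)=0 \pmod 2$.
\end{itemize}

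\textbf{The three-neighbour condition also needs an argument you only allude to.} Your phrase ``tightness or looseness decides which marker is compatible'' is not quite right: the marker is forced by the side containing $R$. Your description of the wac case is also misstated. Take a wac with rim $H'$, centre $c$ and other centre $d$. The component of $H\sm acb$ that is an $ab$-path lies in a sector of $(H',d)$, not of $(H',c)$. It avoids neighbours of $d$ but may contain neighbours of $c$, and that is exactly the difficulty.
\begin{itemize}
\item If this arc lies in $X$, it is replaced by $P$, which has no neighbour of $c$. You then need that the other arc lies in $Y$, so it is an $aYb$-path and has an interior neighbour of $c$ by tightness. Hence $c$ keeps at least three neighbours on the new rim.
\item If the arc lies in $Y$, replacing it by $Q$ is harmless because $n(Q)\geq 1$.
\item The same remark gives three neighbours in the even-wheel case.
\end{itemize}
The turtle/crossing type is preserved for three reasons: the attachments of $d$ are untouched; the sectors of both centres outside the replaced arc are unchanged; and any sector of $(H',c)$ inside the replaced region contains at most two neighbours of $d$, namely $a$ and $b$.

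With these additions your proof becomes the paper's.
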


\begin{proof}
  If $G$ is theta-free (resp.\ triangle-free, wac-free, turtle-free, c-wac-free, even-hole-free, prism-free, even-wheel-free, bipartite) then $G_X$ and $G_Y$ are theta-free (resp.\ triangle-free, wac-free, turtle-free, c-wac-free, even-hole-free, prism-free, even-wheel-free, bipartite) since they are induced subgraphs of $G$.

  Let us now consider the converse claims.  Recall that we view ``bipartite'' as ``(triangle, odd hole)-free'', so each claim is about excluding some structure. For each claim, we assume that both $G_X$ and $G_Y$ do not contain the object $H$ under consideration, suppose that $G$ does, and look for a contradiction. 

  As observed above, $H$ has no clique-separator, no proper 2-separator and no proper $P_3$-separator.  But $H$ is in $G$ and in none of $G_X$ and $G_Y$.  Hence,  $\{a, b\}$ is a (non proper) 2-separator of $H$, or $acb$ is a (non proper) $P_3$-separator of $H$. In particular, $H$ cannot be a triangle.  We may therefore assume that $H\cap (X\cup \{a, b\})$ is an $aXb$-path, or $H\cap (Y\cup \{a, b\})$ is an $aYb$-path. In both cases, we call this path $R$. Hence replacing $R$ by $P$ or $Q$ in $H$ yields a theta (resp.\ wac, turtle, c-wac, hole, prism, wheel) whenever $H$ is a theta (resp.\ wac, turtle, c-wac, hole, prism, wheel) of $G_X$.  For the claim about wheels, it is worth noting that $P$ and $Q$ form the rim of a wheel of $G$ (resp.\ $G_X$, $G_Y$) centred at $c$, so in fact the equivalence holds because all graphs under consideration contain wheels.  
  
  To obtain a contradiction in all cases, it remain to check that the parity is preserved for holes and wheels. 
  
  Let us first check this for holes. Suppose first that $R$ is an $aYb$-path. We need to check  that $Q$ and $R$ have the same parity.  This holds because if their parity differ, then the union of one of them with $P$ yields a hole of $G_Y$ with the forbidden parity. The case when  $R$ is an $aXb$-path is similar ($P$ and $R$ have the same parity, or a hole of the forbidden parity exists in $G_X$).
  
  Finally let us check the parity of wheels. Suppose first that $R$ is an $aYb$-path. We need to check  that $Q$ and $R$ have the same $c$-parity, where the \emph{$c$-parity} of a path is the parity of the number of neighbours of $c$ that it contains.  This holds because if their $c$-parity differ, then the union of one of them with $P$ yields an even wheel of $G_Y$. The case when  $R$ is an $aXb$-path is similar ($P$ and $R$ have the same $c$-parity, or an even wheel exists in~$G_X$).
\end{proof}

\subsection*{Daisies and separators}

\begin{lemma}
  \label{l:decDaisy}
  A daisy has no clique separator, no proper 2-separator and no proper $P_3$-separator. 
\end{lemma}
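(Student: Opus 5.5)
We treat the three kinds of separators in turn, each time using only the explicit shape of a daisy $G$ with hole $C=c_1\dots c_kc_1$ and petals $P_i$. The one fact used throughout is the \emph{connectivity of $G\sm C$}: every petal is a path, hence connected, and every petal has neighbours in $C$. So for any set $S$ we remove, each petal not meeting $S$ stays attached to whatever survives of $c_{i-1}$, $c_i$, $c_{i+1}$.

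\emph{Clique separators.} Since $G$ is triangle-free, a clique is empty, a single vertex or an edge.
\begin{itemize}
\item $G$ is connected, so the empty set is not a separator.
\item Removing a vertex $v$: if $v\in C$, then $C\sm v$ is a path, and each petal is still attached to one of its two ends $c_{i-1},c_{i+1}$, neither of which equals $v$ unless $v\in\{c_{i-1},c_{i+1}\}$. In that case the other end survives. If $v$ lies in a petal, $C$ survives and each piece of the broken petal still reaches $C$ via $c_{i-1}$, $c_i$ or $c_{i+1}$, since $c_i$ has an internal neighbour.
\item Removing an edge: we check the same way that every petal piece keeps an attachment. This uses that no edge of $c_{i-1}x_iP_iy_ic_{i+1}$ has both ends adjacent to $c_i$, so an edge never kills all attachments.
\end{itemize}
This step is routine but needs careful case bookkeeping.

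\emph{Proper 2-separators.} Let $\{a,b\}$ be non-adjacent with $G\sm\{a,b\}$ disconnected. We show that some component $X$ has $G[X\cup\{a,b\}]$ equal to an $ab$-path.
\begin{itemize}
\item If $a,b\in C$, then $C\sm\{a,b\}$ splits into two arcs $A_1,A_2$. A petal $P_i$ with $c_i\notin\{a,b\}$ connects the arcs containing $c_{i-1},c_i,c_{i+1}$. The components are then unions of arcs and petals. Because the centres form a subpath of $C$ with distinct centres, two cases remain. Either an arc meets no petal attachment other than through $a,b$, in which case that arc together with $a,b$ is a path. Or $a$ or $b$ is a petal centre; then the petal's attachments to $c_{i\pm1}$ glue the arcs together.
\item If $a$ or $b$ lies in a petal $P_i$, the component not containing most of $C$ must be a subpath of $P_i$ or of the corresponding sector. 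This again yields a path.
\end{itemize}

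\emph{Proper $P_3$-separators.} Let $acb$ be a $P_3$-separator with components $X$ and $Y$. We must rule out the configuration where $X$ is loose, $Y$ is tight, $c$ has neighbours in both, and neither side is a path. The middle vertex $c$ has degree at least $3$ only if $c\in C$ is a petal centre or $c$ is a vertex of some petal adjacent to $c_i$. We split on this.
\begin{itemize}
\item If $c=c_i$ with petal $P_i$, then $a,b\in\{c_{i-1},c_{i+1}\}\cup N_{P_i}(c_i)$. We enumerate: $a,b$ both on $C$; both in $P_i$; or mixed. In each case, some component is a subpath of a sector of $W_i$ or of $Q_i$ (as in the wac discussion before the statement), hence induces an $ab$-path. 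Alternatively the components fail the loose/tight requirement, because every $ab$-path through the other side must pass through a neighbour of $c$. Here we use that neighbouring petals $P_{i\pm1}$ attach to $c_i$ only at their ends.
\item If $c$ lies in a petal $P_i$ and is adjacent to $c_i$, we argue similarly, with $c_i$ as one of $a,b$.
\end{itemize}

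I expect this last part to be the main obstacle. Adjacent petals interleave: $P_{i+1}$ touches $c_i$, and $P_i$ touches $c_{i+1}$. So I would organise the argument around the wheel $W_i=(H_i,c_i)$ and show that any $P_3$-separator of $G$ restricts to one of $W_i$. I would then invoke the earlier observation about the $P_3$-separators of wheels and wacs, and check that the remaining petals always lie on the loose side in a way that prevents properness.
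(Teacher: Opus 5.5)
Your overall plan matches the paper's: locate where each kind of separator can sit in the daisy, and exhibit a side $X$ with $G[X\cup\{a,b\}]$ an $ab$-path. The clique and 2-separator parts are at the paper's level of detail and essentially right. One sentence there is misleading: a petal with $c_i\notin\{a,b\}$ stays inside one arc of $C\setminus\{a,b\}$, and only a petal centred at $a$ or $b$ joins the two arcs. Your conclusion there is still correct.

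The $P_3$ part, however, has a genuine gap.
\begin{itemize}
\item Your case split rests on the claim that a vertex of degree at least~3 is either a petal centre or a petal vertex adjacent to its centre. This is false. The vertices $c_{i-1}$ and $c_{i+1}$ are adjacent to the petal ends $x_i$ and $y_i$, so they have degree at least~3 even when they are not centres. Such middle vertices are never treated.
\item For $c=c_i$ you restrict to $a,b\in\{c_{i-1},c_{i+1}\}\cup N_{P_i}(c_i)$. But $N(c_i)$ also contains $y_{i-1}$ and $x_{i+1}$ whenever $P_{i-1}$ and $P_{i+1}$ exist. You mention these attachments but never consider them as choices of $a$ or $b$.
\item The path side is not in general ``a subpath of a sector of $W_i$''. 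For $a=c_{i-1}$, $b=c_{i+1}$ it is all of $P_i$, which contains neighbours of $c_i$.
\end{itemize}

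The repair is short, and it makes both the loose/tight discussion and the planned reduction to $W_i$ unnecessary.
\begin{itemize}
\item Properness forces $c$ to have neighbours in both $X$ and $Y$ besides $a$ and $b$, so $\deg(c)\ge 4$. Petal vertices have degree at most~3, so $c\in C$.
\item A non-centre $c_j$ has degree~4 only if it is adjacent to both $y_{j-1}$ and $x_{j+1}$. For each of the six choices of $\{a,b\}$ among its neighbours, $G\setminus ac_jb$ is connected. Indeed, what remains of $C$ is a path. What remains of each petal is one or two subpaths, each reaching that path through the petal's centre or through an end whose neighbour on $C$ survives.
\item The same check shows that $G\setminus ac_ib$ is connected when $c_i$ is a centre and $\{a,b\}$ meets $\{y_{i-1},x_{i+1}\}$.
\item In the remaining case, $a$ and $b$ lie on $P'=c_{i-1}x_iP_iy_ic_{i+1}$. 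The interior of $aP'b$ has no neighbours outside $\{a,b,c_i\}$. So it is a component $X$ with $G[X\cup\{a,b\}]=aP'b$, and $ac_ib$ is not proper.
\end{itemize}
This is what underlies the paper's brief argument: every $P_3$-separator has the form $ac_ib$ with $c_i$ a petal centre, and it has a side that is an $ab$-path.
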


\begin{proof}
  The claim about clique separators is obvious.  The 2-separators $\{a, b\}$ of some daisy $G$ are all such that $a$ and $b$ are in some external sector of some wheel of $G$, or are the ends of some subpath of $C$ containing no centre of a petal. In either case, $\{a, b\}$ is not proper.  The $P_3$-separators of some daisy $G$ are the $P_3$'s $a c_i b$ where $a$ and $b$ are any neighbours of some $c_i$ that is the centre of some petal.  They are all non-proper because for one component $X$ of $G\sm a c_i b$, $G[X\cup \{a, b\}]$ is  an $ab$-path. 
\end{proof}

\section{Known results}
\label{sec:know}

All the results in this section are already known. We include their proofs for the sake of completeness. The following is implicit in~\cite{confortiCKV97}.

\begin{theo}
\label{cor:us}
  If $G$ is (theta, triangle, wheel)-free graph, then $G$ is $K_1$, $K_2$, a hole or $G$ has a clique separator.  
\end{theo}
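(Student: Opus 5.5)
We prove Theorem~\ref{cor:us} by a maximal-hole argument. Let $G$ be (theta, triangle, wheel)-free and atomic. If $G$ has at most two vertices, then since $G$ is connected (the empty set is a clique separator otherwise), $G$ is $K_1$ or $K_2$. Otherwise $G$ is connected with at least three vertices. If $G$ is a tree (no hole and no triangle means chordless-cycle-free, so $G$ is a forest), then any internal vertex is a clique separator. So $G$ contains a hole $H$. Assume $G\neq H$; we aim to find a clique separator, contradicting atomicity.

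First we classify vertices $v\notin H$ with neighbours in $H$. Since $G$ is triangle-free, $v$ has no two adjacent neighbours in $H$. Since $G$ is wheel-free, $v$ has at most two neighbours in $H$. If $v$ has exactly two, say $u,w$, they split $H$ into two paths. Both have length at least $2$, so together with $uvw$ they form a $uw$-theta, a contradiction. Hence every vertex outside $H$ has at most one neighbour in $H$.

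Next, because $G\neq H$ is connected, some component $D$ of $G\sm H$ has a neighbour in $H$. If the set of vertices of $H$ with a neighbour in $D$ is a single vertex $h$, then $h$ is a clique separator separating $D$ from $H\sm h$ (note $|H|\ge 4$), and we are done. Otherwise, choose a path $P=p_1\dots p_k$ in $D$ such that $p_1$ and $p_k$ have neighbours $u\neq w$ in $H$, with $P$ minimal, so that interior vertices of $P$ have no neighbours in $H$. Such a $P$ exists by taking a shortest path in $D$ between vertices attached to distinct vertices of $H$. By the previous paragraph, $k\ge 2$, and $p_1,p_k$ each have a unique $H$-neighbour.

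Finally we analyse $u,w$. If $u$ and $w$ are non-adjacent, then $u\,P\,w$ together with the two $uw$-paths of $H$ form a $uw$-theta, a contradiction. So $uw$ is an edge. Then $u p_1 \dots p_k w$ together with the long $uw$-path of $H$ forms a longer structure: the path $H\sm\{uw\text{-edge}\}$ plus $P$ yields a cycle $C$. Here the vertex $u$ has neighbours $p_1$, $w$ and its other $H$-neighbour. We check that $u$ has exactly three neighbours in the hole $C' = u' \dots w\, p_k \dots p_1$ obtained differently. Concretely, take the hole $C'$ formed by $P$ and the path $H\sm\{u\}$ through $p_1$'s side: $p_1$ must then be adjacent to $u$'s neighbour. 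This fails, so instead we take $C''=p_1\dots p_k w\,H\, u'$ where $u'$ is the other neighbour of $u$. This is not closed, so we use the cycle $u p_1\dots p_k w$ closed by the edge $wu$, of length $k+2\ge 4$, which is a hole. The vertex $u'$... This direct route is messy, so the expected main obstacle is exactly this adjacent case. We plan to handle it as follows. The cycle $C=uP w\,(H\sm\{u,w\})$ going the long way is a hole (interior of $P$ anticomplete to $H$). The vertex $u$ lies off it and is adjacent to $p_1$, $w$ and its other $H$-neighbour $u'$, which are three pairwise non-adjacent vertices of $C$. Hence $(C,u)$ is a wheel, a contradiction. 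We must check $C$ is chordless: $p_1$ is adjacent only to $u$ in $H$, and $p_k$ only to $w$, so this holds. Thus in all cases $G$ has a clique separator.
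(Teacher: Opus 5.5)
Your argument is correct up to the case where $u$ and $w$ are adjacent, but it fails there, and the failure is substantive. The cycle $C=u\,p_1\dots p_k\,w\,w'\dots u'\,u$ (where $w'$ and $u'$ are the other $H$-neighbours of $w$ and $u$) contains $u$, so $u$ does not lie off it. Moreover $uw$ is a chord of $C$, so $C$ is not a hole. If you remove $u$, what remains, $p_1\dots p_k\,w\,w'\dots u'$, is not closed, since $N_H(p_1)=\{u\}$. In fact no contradiction can be extracted from $H\cup P$ in this case. Two holes sharing an edge $uw$ form a (theta, triangle, wheel)-free graph. Take $H$ to be one of them and $D=H'\setminus\{u,w\}$ for the other one $H'$. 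Then your minimal $P$ has adjacent attachments $u,w$, but there is no wheel. The correct conclusion there is that $N_H(D)=\{u,w\}$ is a clique separator. Your split into ``$N_H(D)$ is a single vertex'' versus ``otherwise'' overlooks this case.

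The missing idea is to use atomicity to force non-adjacent attachments, as the paper does. Since $|H|\ge 4$, $N_H(D)$ is either all of $H$ or separates $D$ from $H\setminus N_H(D)$, so in an atomic graph it is not a clique. Pick non-adjacent $u,w\in N_H(D)$, and minimise $P$ over all paths of $D$ whose ends attach to non-adjacent vertices of $H$.

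The price is that interior vertices of $P$ are no longer anticomplete to $H$. Minimality, together with your correct observation that every vertex outside $H$ has at most one neighbour in $H$, only shows that an $H$-neighbour $z$ of an interior vertex of $P$ is a common neighbour of $u$ and $w$. This leaves three cases:
\begin{itemize}
\item If there is no such $z$, you get your $uw$-theta.
\item If there is exactly one, it is the centre of a wheel whose rim is $u\,p_1\dots p_k\,w$ closed by the $wu$-path of $H$ avoiding $z$.
\item If there are two, $z$ and $z'$, then $H$ is the $C_4$ $u z w z' u$ and $z,z'$ are non-adjacent. The subpath of $P$ between a neighbour of $z$ and a neighbour of $z'$ is then strictly shorter than $P$, contradicting minimality.
\end{itemize}
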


\begin{proof}
 We may assume that $G$ is an atomic graph. If $G$ has no hole then, since $G$ is triangle-free, $G$ must be a forest. Hence $G$ must be $K_1$ or $K_2$, since they are the only atomic forests.

Now, let $C$ be a hole contained in $G$. We may assume that $G$ is not a hole. Since $G$ is atomic, there is a path $R=x\ldots y$ in a component $Z\subseteq (G\setminus C)$ and there are non-adjacent vertices $u,v\in V(C)$ such that $xu,yv\in E(G)$. Assume that $u,v$ and $R$ are chosen to minimise the length of $R$.

Since $G$ is triangle-free, $x\neq y$. Otherwise $R$ is just a vertex and $C\cup R$ induces either a theta or a wheel. Furthermore, $N_C(x)=u$ (resp.\ $N_C(y)=v$), otherwise $V(C)\cup\{x\}$ (resp.\ $V(C)\cup \{y\}$) induces either a theta or a wheel.

If there is a vertex $w\in V(C)\setminus\{u,v\}$ such that $N_R(w)\neq \emptyset$, then $w$ is adjacent to a vertex $z$ in the interior of $R$. Notice that this contradicts the minimality of $R$ if $w$ is a non-neighbour of either $v$ or $u$. Hence, $w$ must be adjacent to both $u$ and $v$, which implies that at most two vertices of $C$ can have neighbours in internal vertices of $R$. If only one vertex $w\in V(C)\setminus\{u,v\}$ has a neighbour in $R$, then $G[C\cup R]$ induces a wheel centred in $w$. So suppose that $w_1,w_2\in V(C)\setminus\{u,v\}$ have neighbours in the interior of $R$ say $r_1$ and $r_2$ respectively. Notice that in this case $C=uw_1vw_2u$. The path $r_1Rr_2$ has a smaller length than $R$ and there are two non-adjacent vertices $w_1,w_2\in V(C)$ such that $w_1r_1,w_2r_2\in E(G)$, which contradicts the minimality of $R$.
\end{proof}

\begin{lemma}
  \label{l:vAtachHole}
  Suppose that $G$ is a (theta, triangle)-free graph and $H$ is a hole  of $G$.  If $v \in G\sm H$ has at least two neighbours in $H$, then  $(H, v)$ is a wheel.  In particular, no vertex in $G \sm H$ has exactly two neighbours in $H$. 
\end{lemma}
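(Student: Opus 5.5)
The plan is to show that a vertex $v \in G\sm H$ with at least two neighbours in $H$, but fewer than three, leads to a contradiction. Then $v$ has at least three neighbours in $H$, so $(H, v)$ is a wheel by definition. The second sentence follows at once: if $v$ had exactly two neighbours in $H$, the first part would make $(H,v)$ a wheel, which needs at least three neighbours on the rim.

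So assume $v$ has exactly two neighbours $u$ and $w$ in $H$. If $u$ and $w$ are adjacent, then $uvw$ is a triangle, contradicting that $G$ is triangle-free. Hence $u$ and $w$ are non-adjacent, and removing $u$ and $w$ from the hole $H$ leaves two subpaths. This gives two $uw$-paths $P_1$ and $P_2$ of $H$ with disjoint interiors whose union is $H$. Each has length at least~2, since $u$ and $w$ are not consecutive on $H$. Let $P_3$ be the path $uvw$, which also has length~2.

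It remains to check that $P_1 \cup P_2 \cup P_3$ induces a $uw$-theta. The three paths are chordless: $P_1$ and $P_2$ are subpaths of a hole, and $P_3$ is chordless because $uw \notin E(G)$. They are vertex-disjoint apart from $u$ and $w$. The only possible extra edges are between $v$ and internal vertices of $P_1$ or $P_2$, and between $P_1$ and $P_2$. The former are excluded since $N_H(v) = \{u, w\}$. The latter are excluded because $H$ is a hole, so its only edges are those of the cycle. This yields a theta in $G$, a contradiction.

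There is no real obstacle here. The only point needing care is ruling out $uw \in E(G)$ before building the theta, and that is exactly where triangle-freeness is used.
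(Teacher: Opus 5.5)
Your proof is correct and follows the paper's argument: assume $v$ has exactly two neighbours $u,w$ in $H$, use triangle-freeness to get $uw\notin E(G)$, and observe that $V(H)\cup\{v\}$ then induces a $uw$-theta. Your explicit check that all three paths have length at least~2 and that there are no extra edges is the detail the paper leaves implicit.
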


\begin{proof}
If $v$ has at least two neighbours in $H$ and $V(H)\cup \{v\}$ is not a wheel, then $|N_H(v)|= 2$. So let $u,w$ be the neighbours of $v$ in $H$. Since $G$ is triangle-free, $uw\notin E(G)$. Hence $V(H)\cup\{v\}$ induces an $uw$-theta.
\end{proof}

The following is proved in~\cite{radovanovicV:theta}.

\begin{lemma}
  \label{l:decCube}
  If $G$ is a (theta, triangle)-free graph that contains the cube, then $G$ is isomorphic to the cube or $G$ has a clique separator.  
\end{lemma}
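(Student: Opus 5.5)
Let $K$ be an induced cube in $G$. Assume $G\neq K$ and $G$ is atomic; we derive a contradiction. Since $G$ is atomic, hence connected, some vertex $v\notin K$ has a neighbour in $K$. The plan is to show, in order, that (i) no vertex outside $K$ has two or more neighbours in $K$, and (ii) no path outside $K$ links two distinct cube vertices. Then every component of $G\sm K$ attaches to a single vertex of $K$. That vertex would be a clique separator, which is the contradiction.

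\textbf{Step (i).} First fix the structure of the cube: its vertices are the binary strings of length~3, adjacent when they differ in one coordinate. Its holes are the six $4$-cycles (faces) and the $6$-cycles obtained by deleting two antipodal vertices. Let $v\notin K$ have at least two neighbours in $K$.
\begin{itemize}
\item Since $G$ is triangle-free, $N_K(v)$ is a stable set, so it lies in one colour class of the bipartition, of size~4. Note that any two vertices of a colour class are at distance~2 and lie on a common face.
\item By Lemma~\ref{l:vAtachHole} applied to each face hole, $v$ has $0$, $1$ or at least $3$ neighbours on each face. A face meets a colour class in exactly two vertices, so $v$ has at most two neighbours on any face. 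Hence $v$ has $0$ or $1$ neighbours on every face. This contradicts the fact that two neighbours of $v$ in the same class lie on a common face.
\end{itemize}
So every vertex outside $K$ has at most one neighbour in $K$. This step is short.

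\textbf{Step (ii).} Suppose some component of $G\sm K$ attaches to at least two vertices of $K$. Take a shortest path $R=x\dots y$ in $G\sm K$ such that $x$ has a neighbour $u\in K$, $y$ has a neighbour $w\in K$, and $u\neq w$. By minimality and step (i), the interior of $R$ has no neighbours in $K$, and $N_K(x)=\{u\}$, $N_K(y)=\{w\}$. Choose a hole $H$ of $K$, taken from the list of faces and $6$-holes, and use $R$ to produce a theta, according to whether $u$ and $w$ are adjacent.
\begin{itemize}
\item \emph{If $u$ and $w$ are non-adjacent}, pick a hole $H$ of $K$ through both of them. Since $uw$ is not an edge, a face (when $u,w$ are at distance~2) or a $6$-hole (when they are antipodal) works. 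Then $H\cup R$ is a $uw$-theta: the two $uw$-paths of $H$ and the path $uRw$ have length at least~2, and there are no extra edges.
\item \emph{If $u$ and $w$ are adjacent}, then $uxRyw$ together with the edge $uw$ is a hole. To get a theta, pick a face $F$ containing the edge $uw$, and let $u',w'$ be the other two vertices of $F$, with $u'$ adjacent to $u$ and $w'$ adjacent to $w$. Then $u'$ and $w$ are non-adjacent, and we get three $u'w$-paths: $u'w'w$, $u'uw$, and $u'uxRyw$. These share the vertex $u$, so that does not work directly. Instead, take the $6$-hole $H$ avoiding the antipodes of the edge's neighbourhood that contains the edge $uw$. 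Replacing the edge $uw$ of $H$ by $uRw$ gives a longer hole, and a vertex of $K\sm H$ adjacent to both $u$'s side and $w$'s side should yield a theta or a wheel.
\end{itemize}

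The adjacent case is the main obstacle. Being triangle-free, $G$ forces $R$ to have length at least~1, and I would expect a case check on the cube to produce an induced theta. Concretely, I would use the $4$-path $u\,u_1\,u_2\,w$ around the other face through $uw$, and the third neighbours of $u$ and $w$, looking for an explicit $ab$-theta with $a$ and $b$ in $K$. If this fails, the fallback is to exhibit an even wheel, and then argue separately that such a wheel configuration forces a theta in the cube setting.
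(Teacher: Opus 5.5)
\textbf{Step (ii) targets a false statement.} The adjacent case is not a case that can be ruled out. Let $G$ be the cube $K$ together with an edge $xy$, where $xu,yw\in E(G)$ for some edge $uw$ of $K$ and there are no other edges. Then $\{u,w\}$ is a clique separator whose blocks are the cube and the hole $uxywu$. By Lemma~\ref{l:decClique}, $G$ is (theta, triangle)-free, yet $R=xy$ links two distinct cube vertices. So no case check on the cube can produce a theta there, and your plan cannot be completed.

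What you overlooked is that a clique separator may be an edge. It suffices to show that $N_K(Z)$ is a clique for every component $Z$ of $G\sm K$. In other words, you only need to exclude components attaching to two non-adjacent vertices $u,w$.

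The repair, as in the paper, is to take $R$ shortest among paths of $Z$ whose ends see non-adjacent $u,w$. But then your stronger consequence of minimality is lost. An internal vertex of $R$ may now have neighbours in $K$, although only common neighbours of $u$ and $w$.
\begin{itemize}
\item In the antipodal case $u,w$ have no common neighbours, so your $6$-hole theta goes through unchanged.
\item In the distance-$2$ case, the two common neighbours of $u$ and $w$ are exactly the other two vertices of the face you use, so that theta may be destroyed. You need a theta avoiding them. For instance, with $u=000$ and $w=011$, take the theta between $100$ and $111$ formed by the paths $100\,101\,111$, $100\,110\,111$ and $100\,u\,xRy\,w\,111$.
\end{itemize}

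\textbf{Step (i) has a smaller error.} A stable set of the cube need not lie in one colour class. An antipodal pair $\{e,\bar e\}$ is stable and meets both classes, and these pairs are the only bichromatic stable sets of size at least~$2$. Your face argument does not cover $N_K(v)=\{e,\bar e\}$. The fix is to apply Lemma~\ref{l:vAtachHole} to a $6$-hole of $K$ containing both $e$ and $\bar e$, obtained by deleting some other antipodal pair. On that hole $v$ has exactly two neighbours, which the lemma forbids.
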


\begin{proof}
  We denote the vertices of a cube $H$ contained in $G$ with a bipartition $A= \{a_1, \dots, a_4\}$ and $B= \{b_1, \dots b_4\}$, in such a way that $a_i b_j\in E(H)$ if and only if $i\neq j$. If some vertex $v\in G\sm H$ has at least two neighbours in $A$, say $a_1$ and $a_2$ up to symmetry, then $\{v, a_1, a_2, b_3, b_4\}$ contains a triangle or induces an $a_1a_2$-theta, a contradiction.  Hence, $v$ has at most one neighbour in $A$, and symmetrically in $B$.  If $v$ has two neighbours in $H$, they must therefore be $a_1$ and $b_1$ up to symmetry, so $\{v, a_1, a_2, a_3, b_1, b_2, b_3\}$ induces an $a_1b_1$-theta, a contradiction.  Hence, a vertex $v\in G\sm H$ has at most one neighbour in $H$. 
  
  We may assume that $G\sm H$ has a connected component $Z$ such that $N_H(Z)$ is not a clique.  So, there exists a path $R = x\dots y$ in $Z$ and non-adjacent $u, v\in H$ such that $xu, yv\in E(G)$. We choose $u,~v$ and $R$ subject to the minimality of $R$. This implies that if an internal vertex  of $R$ has a neighbour $w$ in $H$, then $w$ must be a common neighbour of $u$ and $v$.   
  
  If $u\in A$ and $v\in B$, then up to symmetry, $u=a_1$ and $v=b_1$.  Note that  $u$ and $v$ have no common neighbours in $H$, so no internal vertex of $R$ has a neighbour in $H$.  Hence, $R$, $a_1$, $a_2$, $a_3$, $b_1$, $b_2$ and $b_3$ form an $a_1b_1$-theta, a contradiction. 
  We may therefore assume that $u=a_1$ and $v=a_2$.  No internal vertices of $R$ is adjacent to $a_3$, $a_4$,  $b_1$ and $b_2$ because they are not common neighbours of $a_1$ and $a_2$. Hence, $R$,  $a_1$, $a_2$, $a_3$, $a_4$, $b_1$ and $b_2$ form an $b_1b_2$-theta, a contradiction. 
\end{proof}

Variants of the following are proved in \cite[Theorem 2.5]{DBLP:journals/jgt/ConfortiCKV00} for (theta, triangle, even wheel)-free graphs and in~\cite[Lemma 3.1]{DBLP:journals/corr/abs-2001-01607}. 
\begin{lemma}
  \label{l:VattachW}
  Suppose that $G$ is a (theta, triangle, c-wac, cube)-free graph and $W = (H, c)$ is a wheel of $G$.  If $v$ is a vertex of $G\sm W$ then $N_W(v) \subseteq S\cup \{c\}$ where $S$ is a  sector of $W$.  Moreover, if $G$ is wac-free, then either $N_W(v) \subseteq S$ or $N_W(v) \subseteq \{c\}$.  
\end{lemma}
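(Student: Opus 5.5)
We split according to whether $v$ is adjacent to $c$, and use Lemma~\ref{l:vAtachHole} for $v$ against the rim $H$ and against the holes formed by $c$ and sectors.

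\textbf{Case 1: $v$ is not adjacent to $c$.} We show that $N_H(v)$ lies in one sector. If $v$ has at most one neighbour in $H$ this is clear. Otherwise, by Lemma~\ref{l:vAtachHole}, $(H,v)$ is a wheel. Since $vc\notin E(G)$, $(H,c,v)$ is not a wac, but we can still compare the two wheels. Suppose $N_H(v)$ is not contained in a single sector of $(H,c)$. Then there are neighbours $x,y$ of $v$ in $H$ lying in different sectors, with neighbours of $c$ separating them on both sides along $H$. We would take a minimal such configuration, meaning $x,y$ chosen so that one $xy$-subpath $P$ of $H$ contains no other neighbour of $v$ in its interior but contains some neighbours of $c$ there.
\begin{itemize}
\item If $P$ contains exactly one neighbour $c'$ of $c$ in its interior, consider the hole $vxPyv$. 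Then $c$ has either one neighbour in it, or (if $c$ is adjacent to $x$ or $y$) two or three. The hole formed by $c$ and the sector through $c'$ then yields, via $x$, $v$ and $y$, an $c'v$- or $cv$-theta.
\item Otherwise $c$ has at least two neighbours in $vxPyv$, so by Lemma~\ref{l:vAtachHole} the pair $(vxPyv, c)$ is a wheel. Combining it with $(H,v)$ and the remaining part of $H$, we expect to find a theta, a triangle, or a configuration in which $v$ and $c$ have common neighbours arranged so as to yield the cube.
\end{itemize}
The cube is exactly the extremal case in which $c$ and $v$ each have three neighbours on a $6$-hole, alternating, which is why it is excluded.

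\textbf{Case 2: $v$ is adjacent to $c$.} Then $v$ has no neighbour of $c$ in $H$ as a neighbour, since $G$ is triangle-free. If $v$ has at least two neighbours in $H$, then $(H,v)$ is a wheel by Lemma~\ref{l:vAtachHole}, so $(H,c,v)$ is a wac. It cannot be a c-wac, so it is a turtle, and hence $N_H(v)$ lies in one sector of $(H,c)$. This gives $N_W(v)\subseteq S\cup\{c\}$. If $v$ has exactly one neighbour $h$ in $H$, then $h$ lies in some sector and we are done. For the moreover part, when $G$ is wac-free this turtle is forbidden, so $v$ has at most one neighbour in $H$.

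\textbf{Remaining issue in the wac-free case.} It remains to exclude the configuration where $v$ is adjacent to both $c$ and exactly one vertex $h$ of $H$. Since $h$ is not adjacent to $c$, it is an interior vertex of a sector $S=s\dots s'$. Then $c s S h v c$ and $c s' S h v c$ are holes. The hole $C' = v h S s' c \dots$ going around the rest of $H$, namely $vh$ followed by $hSs'$, then $H$ from $s'$ back to $s$ avoiding $S$, then $s\,c\,v$, is problematic: $c$ must not be used twice. We therefore build instead the hole $D = v\,h\,S\,s'\,(H\setminus S)\,s\,?$. Since $v$ reaches $s$ only through $c$, we use $D' = c\,s\,(H\setminus \mathring S)\,s'\,S\,h\,v\,c$, wait, this revisits $c$ only once; $D'$ is a hole consisting of $c$, the complement path of $S$, the segment $s'Sh$, and $v$.

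Then $c$'s neighbours on $H$ outside $\{s,s'\}$ make it a chord-laden wheel, so instead we consider $s$ (or another neighbour of $c$) as a centre. Concretely, the hole $H' = h\,S\,s\,c\,v\,h$ together with the vertex $s'$ yields a theta or wheel. Choosing appropriately among these, we expect a wac $(H'', c, x)$ for some $x$, contradicting wac-freeness, or else a theta between $h$ and $c$ via $v$, $hSs\ldots$ and $hSs'$.

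This final case analysis, tracking which small holes through $v$, $c$ and $h$ produce a theta versus a wac, together with the minimality argument in Case 1, is where I expect the main difficulty.
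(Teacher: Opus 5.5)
\textbf{Verdict: there is a genuine gap in Case~1.}

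The gap is Case~1 ($vc\notin E(G)$). This is the heart of the lemma and the only place where cube-freeness is used, and you give no actual argument there, only a minimality framework and ``we expect'' statements. Your first bullet is also false as stated. Let $H$ be a $6$-hole of the cube and let $c$, $v$ be the two remaining (antipodal) vertices. Then every sector $P=x\dots y$ of $(H,v)$ has exactly one neighbour of $c$ in its interior, $c$ is adjacent to neither end of $P$, and $x,y$ lie in different sectors of $(H,c)$. Yet the cube is theta-free, triangle-free and c-wac-free. So the bullet-1 configuration cannot produce a theta by itself; this is exactly where the cube lives. It does not live in your second bullet, which you attribute to ``common neighbours'': in the cube $c$ and $v$ have no common neighbour, as your own ``alternating'' remark says.

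What is missing is a chain like the paper's:
\begin{enumerate}
\item[(i)] Suppose $v$ has neighbours in $S\sm T$ and in $T\sm S$ for distinct sectors $S=s\dots s'$ and $T=t\dots t'$, and at least two neighbours in $S$, with extreme ones $x$ (near $s$) and $x'$ (near $s'$). Then the hole $vx'Ss'csSxv$ together with the path $vy'Tt'c$ (where $y'$ is the neighbour of $v$ in $T$ closest to $t'$) gives a $vc$-theta. In the degenerate case $t=s'$, $ty'\in E(G)$, you get a $vt$-theta instead. Hence $v$ has at most one neighbour in each sector of $(H,c)$, and symmetrically $c$ has at most one neighbour in each sector of $(H,v)$.
\item[(ii)] Consequently $N_H(c)\cap N_H(v)=\emptyset$, and the two neighbourhoods alternate along $H$.
\item[(iii)] Take three consecutive sectors $Q_i=q_i\dots q_{i+1}$ ($i=1,2,3$) of $(H,c)$, with $v_i$ the unique neighbour of $v$ in $Q_i$. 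The paths $vv_2Q_2q_3$, $vv_3Q_3q_3$ and $vv_1Q_1q_1cq_3$ form a theta unless $v_3q_1\in E(G)$. By symmetry, this forces exactly three sectors, each of length~$2$, so $W\cup\{v\}$ induces the cube.
\end{enumerate}
Without an argument of this kind, Case~1 remains unproved.

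Your Case~2 is correct and matches the paper: the wac $(H,c,v)$ must be a turtle, and it is excluded outright when $G$ is wac-free. Its ``remaining issue'' needs none of the auxiliary holes you try. If $N_W(v)=\{c,h\}$, then $h$ lies in the interior of a sector $S=s\dots s'$, and $hvc$, $hSsc$, $hSs'c$ form an $hc$-theta directly. You name this theta at the very end, but only as one possible outcome among several; you should state it and check it.
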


\begin{proof}
 If $v$ has at most one neighbour in $H$ then, since $G$ is (theta,triangle)-free, either $N_W(v) \subseteq \{c\}$, or $N_W(c)$ is included in some sector of $W$. Hence, by Lemma~\ref{l:vAtachHole}, we may assume that $|N_H(v)|\geq 3$. So $W' = (H, v)$ is a wheel. 
 
 If $vc\in E(G)$, then $N_W(v) \subseteq S\cup \{c\}$ where $S$ is a  sector of $W$, for otherwise $(H, c, v)$ is a c-wac.  Note also that $(H, c, v)$ is a wac (specifically a turtle), so when $G$ is wac-free, there is a contradiction.  We may therefore assume from here on that  $vc\notin E(G)$.  
 
 Suppose by contradiction that $N_W(v)$ is not included in some sector of~$W$. 
 So there exist distinct sectors $S=s\ldots s'$ and $T=t\ldots t'$ of $W$ such that $v$ has at least one neighbour in $S\setminus T$ and at least one neighbour in $T\setminus S$.  Up to symmetry, we assume that $s$, $s'$, $t$ and $t'$ appear in this order along $H$ and that $s\neq t'$ (but possibly $s'=t$). 

 If $|N_{S}(v)|\geq 2$, then let $x$ (resp.\ $x'$) be the neighbour of $v$ in $S$ closest to $s$ (resp.\ $s'$) along $S$. 
 Thus, $R= vx'Ss'csSxv$ is a hole in $G$. Moreover, $N_{T\setminus S}(v)\neq\emptyset$, so let $y$ (resp.\ $y'$) be the neighbour of $v$ in $T$ closest to $t$ (resp.\ $t'$) along $T$. Notice that $R'=vy'Tt'c$ is a chordless path in $G$ and $G[R\cup R']$ is a $vc$-theta in $G$, unless $t=s'$ and $ty'\in E(G)$, in which case, $R$ and $y'$ form a $vt$-theta, a contradiction.
 
Thus, $v$ can have at most one neighbour in each sector of $W$, and similarly $c$ can have at most one neighbour in each sector of $W'$. Moreover, $N_H(c)\cap N_H(v)=\emptyset$, for otherwise either $c$ has at least two neighbours in some sector of $(H, v)$ or $v$ has at least two neighbours in some sector of $(H, c)$ (see Fig.~\ref{fig:lem_attach}).

\begin{figure}[!h]
    \centering
    \includegraphics{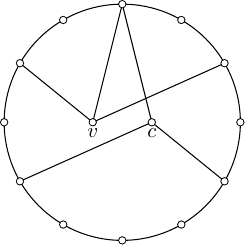}
    \rule{1cm}{0cm}
    \includegraphics{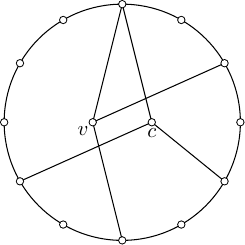}
    \caption{When $v$ and $c$ have a common neighbour}
    \label{fig:lem_attach}
\end{figure}

Hence, $|N_H(c)|=|N_H(v)|$ and the neighbours of $c$ and $v$ alternate in~$H$.  
Consider three consecutive sectors  of $W= (H, c)$: $Q_1 = q_1\dots q_2$, $Q_2= q_2\dots q_3$ and $Q_3 = q_3 \dots q_4$. 
Since the neighbours of $c$ and $v$ alternate, $v$ has a unique neighbour $v_i$ in each $Q_i$. If $v_3q_1\notin E(G)$, then $vv_2Q_2q_3$, $vv_3Q_3q_3$ and $vv_1Q_1q_1cq_3$ form a theta, a contradiction. 
Hence, $v_3q_1\in E(G)$ (in particular $q_4=q_1$ and $W$ has three sectors).  
A symmetric argument shows that $v_iq_{i+1}$ and $q_iv_{i+1}$ are edges for all $i\in [3]$ (subscripts are modulo 3).  
Hence $H$ and $v$ form a cube, a contradiction. 
\end{proof}

\section{Lemmas}
\label{sec:lem}

\begin{lemma}
  \label{l:CattachW}
  Suppose that $G$ is a (theta, triangle, wac, cube)-free graph and $W = (H, c)$ is a wheel of $G$.  
  If $Z$ is a connected induced subgraph of $G\sm W$, then $N_H(Z)$ is included in some sector of $W$.
  \end{lemma}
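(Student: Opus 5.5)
Note first that $Z\subseteq G\sm W$, so $c\notin Z$; the statement only concerns $N_H(Z)$. I would argue by induction on $|V(Z)|$, or equivalently take a counterexample $Z$ that is minimal by inclusion. If $|Z|=1$, the claim is exactly the second part of Lemma~\ref{l:VattachW}, which applies because $G$ is wac-free: for $Z=\{v\}$ we get $N_W(v)\subseteq S$ or $N_W(v)\subseteq\{c\}$, hence $N_H(v)\subseteq S$ for some sector $S$.

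Now suppose $Z$ is minimal with $N_H(Z)$ not contained in a single sector. Minimality lets us reduce $Z$ to a path. Choose vertices $u,w\in N_H(Z)$ that do not lie in a common sector, a path $R=x\dots y$ in $Z$ with $xu, yw\in E(G)$, and choose these to make $R$ as short as possible. By minimality, $Z=R$. Each proper subpath of $R$, namely $R\sm x$ and $R\sm y$, has its $H$-neighbourhood in a single sector. So $N_H(R\sm y)\subseteq S_1$ and $N_H(R\sm x)\subseteq S_2$, with $S_1\neq S_2$. Interior vertices of $R$ then have neighbours only in $S_1\cap S_2$, which is empty or a single common endpoint $t$, and $t$ is a neighbour of $c$. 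By the base case, $x$ has all its $H$-neighbours in $S_1$ and $y$ in $S_2$, and $x$ has some neighbour in $S_1\sm S_2$, symmetrically for $y$.

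Next, the case analysis, where $c$ plays a role. The vertices of $R$ may be adjacent to $c$, but $c$ together with two adjacent vertices of $R$ cannot form a triangle. Moreover, by Lemma~\ref{l:VattachW}, no vertex of $R$ adjacent to $c$ has a neighbour in $H$. I would take $x'$ as the neighbour of $x$ in $S_1$ closest to the far end of $S_1$ from $S_2$, and similarly $y'$ in $S_2$. I then form the hole $H'$ obtained by going from $x'$ along $R$ to $y'$, and closing through the part of $H$ containing the region between $S_1$ and $S_2$ that passes through sectors other than $S_1,S_2$ when possible. The aim is to find either a theta or a wheel centred at $c$ whose rim contains $R$ and which has an adjacent second centre.

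Concretely, there are two cases.

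\textbf{$c$ has no neighbour in $R$.} The path $x\dots y$ plus the two arcs of $H$ joining $N(x)$ to $N(y)$, together with $c$, give a theta with ends $c$ and a vertex of $R$, or a $uw$-theta. Here I follow the proof of Lemma~\ref{l:VattachW}, treating $R$ like the single vertex $v$: the "at least two neighbours in a sector" argument, and then the alternation argument, which now yields a theta instead of a cube because $R$ has length at least $1$.

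\textbf{$c$ has neighbours in $R$.} Then $c$ has at least three neighbours on a suitable hole made of $R$ and an arc of $H$, giving a wheel centred at $c$. Picking a vertex of $H$ adjacent to $c$, with at least three neighbours on that hole, yields a wac. If that is not available, a shortest subpath of $R$ between consecutive neighbours of $c$ together with $c$ gives a theta.

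The main obstacle is this second case. It requires careful bookkeeping of where $x$ and $y$ attach: at single vertices or several, and whether they attach near the shared endpoint $t$ of $S_1,S_2$. I would first handle the subcase where $S_1$ and $S_2$ are non-consecutive sectors, where the arcs are clean. I would then treat consecutive sectors sharing $t$ separately, using that interior vertices of $R$ can only see $t$, and that the edges $tc$ and $t$–$R$ generate a $ct$-theta or a wheel centred at $t$ whose rim uses $c$.
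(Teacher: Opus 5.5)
Your reduction matches the paper's setup: a minimal path $R=x\dots y$ with $N_H(x)\subseteq S_1$, $N_H(y)\subseteq S_2$, with $x,y$ non-adjacent to $c$ by Lemma~\ref{l:VattachW}, and with interior vertices of $R$ seeing at most a common endpoint $t$ of $S_1,S_2$. The gap is in the contradictions you aim for afterwards, which are either impossible or not forbidden.

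\begin{itemize}
\item A $ct$-theta cannot exist, because $ct\in E(G)$ and the ends of a theta are non-adjacent. A wheel centred at $t$ whose rim passes through $c$ is not by itself excluded.
\item In your second case, a neighbour of $c$ on $H$ with three neighbours on your hole need not exist. When $S_1,S_2$ are not consecutive, each neighbour of $c$ on $H$ lying off such a hole has at most one neighbour on it. Also, $c$ together with a subpath of $R$ between consecutive neighbours of $c$ is a hole, not a theta.
\item In your first case, the alternation/cube part of Lemma~\ref{l:VattachW} concerns a single vertex whose neighbours interleave with those of $c$ all around $H$. That is not the situation here, and the remainder is only asserted.
\end{itemize}

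Your sketch never produces the configuration that actually closes the argument.

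Write $S_1=s_1\dots r_1$ and $S_2=r_2\dots s_2$, with $s_1,r_1,r_2,s_2$ in this order on $H$ and $s_1\neq s_2$. Let $x',x''$ be the neighbours of $x$ on $S_1$ closest to $s_1$ and to $r_1$ respectively, and define $y',y''$ on $S_2$ similarly.

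The first missing idea is to split on whether the shared endpoint $r_1=r_2=t$ sees the interior of $R$, not on whether $c$ does. If it does, take the hole $H'$ formed by $R$ and the arc of $H$ from $x'$ to $y'$ avoiding $t$. Since $c$ sees $s_1,s_2\in H'$, $(H',c)$ is a wheel by Lemma~\ref{l:vAtachHole}. So $t$ has exactly one neighbour $z$ on $H'$, since otherwise $(H',c,t)$ is a wac. Then $ztc$ and shortest $z$--$c$ paths through the two sides of $H'$ form a $zc$-theta.

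Otherwise the interior of $R$ has no neighbour in $H$, and the decisive configuration lives in $H\cup R$, where $c$ plays no role. If $x'\neq x''$ and $y'\neq y''$, deleting the interiors of $x'S_1x''$ and $y'S_2y''$ leaves an $xy$-theta. The exception is $xy\in E(G)$, in which case $(H,x)$ and $(H,y)$ are wheels by Lemma~\ref{l:vAtachHole}, so $(H,x,y)$ is a wac.

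If instead $x'=x''$, then $x'S_1s_1c$, $x'S_1r_1c$ and a shortest $x'$--$c$ path in $x'xRyy'S_2s_2c$ form an $x'c$-theta. The exception is when $y'=y''$ and $x',y'$ are both adjacent to $t$; then $H\cup R$ contains an $x'y'$-theta.

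Neighbours of $c$ on $R$ are absorbed by taking shortest paths to $c$, so they never need the separate case you introduce.
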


\begin{proof}
  Otherwise, there exist distinct sectors $S = s\dots r$ and $T = t \dots r'$ of~$W$ such that some vertex of $S\sm T$ and some vertex of $T \sm S$ both have neighbours in $Z$.  
  So, $Z$ contains a path $P = x\dots y$ such that $x$ has neighbours in $S\sm T$ and $y$ has neighbours in $T \sm S$. 
  We suppose that $S$, $T$ and $P$ are chosen subject to the minimality of $P$.  
  By Lemma~\ref{l:VattachW}, $P$ has length at least~1 (so $x\neq y$), $N_W(x) \subseteq S$ and $N_W(y) \subseteq T$ (in particular none of $x$ and $y$ is adjacent to $c$).  
  By the minimality of $P$, internal vertices of $P$ have no neighbours in $H$, except possibly the unique vertex in $S \cap T$ (if any).  
  Note that internal vertices of $P$ may be adjacent to $c$.

  Let $x'$ (resp.\ $x''$) be the neighbour of $x$ in $S$ closest to  $s$ (resp.\ to $r$) along~$S$.  
  Let $y'$ (resp.\ $y''$) be the neighbour of $y$ in $T$ closest to $t$ (resp.\ to~$r'$) along $T$.
  These names are given in such a way that $s$, $x'$, $x''$, $r$, $r'$, $y''$, $y'$ and $t$ appear in this order along $H$.  
  Up to symmetry, we assume that $s\neq t$ (implying that $st\notin E(G)$ since $G$ is triangle-free, but possibly $r=r'$).

  Suppose that some internal vertex of $P$ has a neighbour in $H$.
  Then, by the minimality of $P$, we must have $r=r'$ and $r$ is the only vertex of $H$ with neighbours in the interior of $P$. 
  Consider the hole $H' = x' (H \sm r) y' y P x x'$.  
  By Lemma~\ref{l:vAtachHole}, $(H', c)$ is a wheel (because $c$ has at least two neighbours in~$H'$: $s$ and $t$).  
  So, $r$ has exactly one neighbour $z$ in $H'$ (that is in the interior of $P$), for otherwise by Lemma~\ref{l:vAtachHole} $(H', c, r)$ is a wac.    
  Now, $zrc$, a shortest path from $z$ to $c$ in $z P x x' S s c$, and a shortest path from $z$ to $c$ in $z P y y' T t c$ form a $zc$-theta. 
  Hence, no internal vertex of $P$ has a neighbour in $H$. 
  
  If $x'=x''$, then $x' S s c$, $x' S r c$ and a shortest path from  $x'$ to $c$ in $x' x P y y' T t c$ form an $x'c$-theta, unless $c$ has no neighbour in $P$,  $r=r'$, $y'=y''$ and $y'r\in E(G)$.  
  In particular, $y'=y''$ and a symmetric
  proof based on this fact shows that $x'r\in E(G)$.  Hence, $P$ and
  $H$ form an $x'y'$-theta, a contradiction. 
  Hence, $x'\neq x''$.  Symmetrically, $y'\neq y''$.

  Now, removing the interior of $x' S x''$ and $y' T y''$  in $P\cup H$ yields an $xy$-theta, unless $xy\in E(G)$.  
  In this last case, by Lemma~\ref{l:vAtachHole}, $x$ and $y$ are both centres of a wheel with rim $H$, so $(H, x, y)$ is a wac, a contradiction again. 
\end{proof}

Lemma~\ref{l:CattachW} has an interesting consequence regarding
$P_3$-separators (but we never use it in the rest of the paper).

\begin{lemma}
  \label{l:looseSide}
  Let $G$ be a (theta, triangle, wac, cube)-free graph and $(X, acb, Y)$ be a split with respect to a proper $P_3$-separator of $G$. 
  Then, no internal vertex of any $aXb$-path is adjacent to $c$.
\end{lemma}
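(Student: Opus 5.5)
Suppose for contradiction that some $aXb$-path $R$ has an internal vertex adjacent to $c$. The idea is to build a wheel centred at $c$ and apply Lemma~\ref{l:CattachW} to the component $Y$, which is connected and disjoint from $X \cup \{a,b,c\}$.

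First, the wheel. By properness, $X$ is loose, so there is an $aXb$-path $P$ with no internal vertex adjacent to $c$. The union $P \cup R$ need not be a hole, so I will instead pair each of them with an $aYb$-path. Since $Y$ is tight, any $aYb$-path $Q$ has internal neighbours of $c$. The set $P \cup Q$ is a hole $H$, because $X$ and $Y$ are anticomplete and $ab \notin E(G)$. Indeed $ab$ is not an edge: otherwise $acb$ would be a triangle. The vertex $c$ is adjacent to $a$ and $b$ and to at least one internal vertex of $Q$, so by Lemma~\ref{l:vAtachHole}, $(H,c)$ is a wheel. Since $P$ has no internal neighbour of $c$, the whole path $P$ is a sector of $(H,c)$, namely the one with ends $a$ and $b$.

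Now apply Lemma~\ref{l:CattachW} to the wheel $(R \cup Q, c)$. By the same argument as above, $R \cup Q$ is a hole and $c$ has at least three neighbours in it, so $(R\cup Q, c)$ is a wheel. Its sectors are subpaths of $R$ and of $Q$ between consecutive neighbours of $c$. Take $Z = P \setminus \{a,b\}$, the interior of $P$. It is connected, nonempty (since $a$ and $b$ are non-adjacent), contained in $X$, and disjoint from $R \cup Q \cup \{c\}$ provided $P$ and $R$ share only their ends. The set $N(Z)$ contains both $a$ and $b$. However, $a$ and $b$ do not lie in a common sector of $(R\cup Q,c)$, because $c$ has an internal neighbour on $R$ and also on $Q$, which separates them along the hole on both sides. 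This contradicts Lemma~\ref{l:CattachW}.

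The main obstacle is that $P$ and $R$ may share internal vertices, so $Z$ may not be disjoint from $R$. To handle this, I would first pick an internal vertex $r$ of $R$ adjacent to $c$. Then I would take $Z$ to be the component of $G[X \setminus V(R)]$, or rather of $G[(X\cup\{a,b\}) \setminus V(R)]$ with $a$ and $b$ removed, chosen suitably. A cleaner option is to choose $R$ among all $aXb$-paths with an internal neighbour of $c$ so that it shares as few vertices with $P$ as possible, or to walk along $P$: let $p$ be the last vertex of $P$, starting from $a$, that lies on $aRr$, and let $p'$ be the first vertex after $p$ that lies on $rRb$. The subpath of $P$ strictly between $p$ and $p'$ is then a connected set $Z$ avoiding $R$, with neighbours $p \in aRr$ and $p' \in rRb$. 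If $Z$ is empty, then $pp'$ is an edge of $P$ joining the two sides of $r$ on $R$, which would give $R$ a chord, unless the two sides meet at $r$ itself; that case is impossible since $r$ is not on $P$. The points $p$ and $p'$ lie on opposite sides of $c$'s neighbour $r$ in the hole $R \cup Q$, and $c$ also has neighbours on $Q$, so they are in no common sector. We also need $Z$ to have no neighbour equal to $c$, which holds because $Z \subseteq P$ is internal to $P$. Lemma~\ref{l:CattachW} then gives the contradiction.
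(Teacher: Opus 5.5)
Your proof is correct and follows essentially the paper's route: both build the wheel $(R\cup Q, c)$ and contradict Lemma~\ref{l:CattachW} with a connected piece of the interior of $P$ that avoids this wheel but has neighbours in two different sectors. The only difference is how that piece is found: the paper uses a minimality argument on a path $T$ avoiding $c$ and its neighbours, whereas you take explicitly the subpath of $P$ between its last vertex on $aRr$ and its next vertex on $rRb$ (your first wheel $(P\cup Q, c)$ is not needed).
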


\begin{proof}
  Recall that from the definition of splits and proper
  $P_3$-separators, $X$ is loose and $Y$ is tight, that is there exists an $aXb$-path $P$  with  no  internal vertex adjacent to $c$, and every
  $aYb$-path that has least one internal vertex adjacent to $c$.  Consider such an $aYb$-path $Q$.  Suppose for a contradiction that there exists an $aXb$-path $R$  that does contain an internal vertex adjacent to $c$.  Note that $R$, $Q$ and $c$ form a wheel $W$ centred at $c$, and no sector of $W$ contain both $a$ and $b$.

  Because of the interior of $P$, there exists two sectors $S$
  and $S'$ of $W$, and some path $T = t\dots t'$ that contains neither $c$ nor neighbours of $c$ and such that $t$ has a neighbour in $S\sm S'$ and $t'$ has a neighbour in $S'\sm S$.  We suppose $S$, $S'$ and $T$ are chosen subject to the minimality of $T$.

  By the minimality of $T$, $T$ is disjoint from $W$.  It therefore contradicts Lemma~\ref{l:CattachW}. 
\end{proof}

The next lemma describes how a connected set of vertices may attach to the centre and the rim of a wheel.  
Before stating it, let us describe two examples represented in Fig.~\ref{f:attach}.  
In both examples, a path $P = x \dots y$ attaches to a wheel $W = (H, c)$, and both examples turn out to be daisies. In the first one, the hole is $c s S s' c$ and there are two petals: $P$ and $H\sm S$.  
In the second one, the hole is $c s y t' S s' c$ and there are three petals: the interior of $t' S s$, $P\sm y$ and $H \sm S$.
The next lemma shows that in some sense, these two examples are the only possibilities.

\begin{figure}
  \center
  \includegraphics{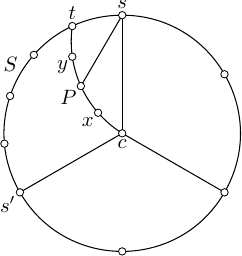}
  \rule{2em}{0ex}
  \includegraphics{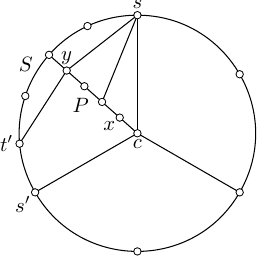}
  \caption{Two examples of attachments to a wheel\label{f:attach}}
\end{figure}

\begin{lemma}
  \label{l:CattachPetal}
  Suppose that $G$ is a (theta, triangle, wac, cube)-free graph and $W = (H, c)$ is a wheel of $G$.  
  Let $Z$ be a connected induced subgraph of $G\sm W$ such that $c$ has a neighbour in~$Z$ and $N_W(Z)$ is not a clique.  
  Then $Z$ contains a path $P = x \dots y$ of length at least~1 and $W$ has a sector $S = s \dots s'$ such that $N_W(x)= \{c\}$, $N_W(y) \subseteq S$, $y$ has at least one neighbour in the interior of~$S$, $s$ has at least one neighbour in the interior of $P$ and no internal vertex of $P$ has a neighbour in~$W\sm s$. 
  Moreover, if $y$ has a neighbour in~$S$ that is not adjacent to $s$, then $ys\in E(G)$.  
  In all cases, $W$ and $P$ form a daisy with two or three petals. 
\end{lemma}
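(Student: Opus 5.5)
By Lemma~\ref{l:VattachW}, since $G$ is wac-free, every vertex $v\in Z$ satisfies either $N_W(v)\subseteq\{c\}$ or $N_W(v)\subseteq T$ for some sector $T$. By Lemma~\ref{l:CattachW}, $N_H(Z)$ is contained in a single sector $S=s\dots s'$ of $W$. So $N_W(Z)\subseteq S\cup\{c\}$.

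Since $N_W(Z)$ is not a clique, $Z$ must have a neighbour in $S\sm\{s,s'\}$. Indeed, otherwise $N_W(Z)\subseteq\{c,s,s'\}$. Then $Z$ would have a neighbour in both $s$ and $s'$, or else the set is a clique. But a path in $Z$ from a neighbour of $s$ to a neighbour of $s'$, together with $c$ or with $S$, then yields a theta or a wac-type contradiction. I would settle that degenerate case at the end.

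I then choose a path $P=x\dots y$ in $Z$ minimal such that $x$ is adjacent to $c$ and $y$ has a neighbour in the interior of $S$. By minimality, internal vertices of $P$ are adjacent neither to $c$ nor to the interior of $S$. Also $x\neq y$, because $x$ sees $c$ and so $N_W(x)=\{c\}$ by Lemma~\ref{l:VattachW}. Hence $N_W(x)=\{c\}$ and $N_W(y)\subseteq S$. The only possible attachments of internal vertices of $P$ are $s$ and $s'$.

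The key steps, in order, are as follows.

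\textbf{(i)} Some internal vertex of $P$ must see $s$ or $s'$. Otherwise, let $y'$ be the neighbour of $y$ in $S$ closest to $s$ and $y''$ the one closest to $s'$. Then $c$, the path $P$, the rim, $y'$ and $y''$ yield a theta. If $y'=y''$, this is a $cy'$-theta using $sS$, $s'S$ and $P$. If $y'\neq y''$, a $cy$-theta arises after deleting the interior of $y'Sy''$; the routes are $cxPy$, $cs Sy'y$ and $cs'Sy''y$. Triangle-freeness ensures these paths have length at least~2.

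\textbf{(ii)} At most one of $s,s'$ has neighbours in the interior of $P$; say it is $s$. If both did, take the subpath of $P$ between neighbours of $s$ and $s'$ together with $H\sm(\text{interior of }S)$. This gives a hole containing both $s$ and $s'$. On that hole $c$ has at least three neighbours, and $x$ or further attachments create a wac or a theta. I would apply Lemma~\ref{l:vAtachHole} to $c$ and this hole, and then to $s$ or $y$.

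\textbf{(iii)} Now consider the hole $H'=s\,P'\,y\,y''S s'\,c\,s$, where $P'$ is the part of $P$ from the last neighbour of $s$ to $y$. I analyse how $y$ attaches to $S$. If $y$ had a neighbour $y'\in S$ non-adjacent to $s$ while $ys\notin E(G)$, then $s$, $c$ and the paths to $y'$ and $y''$ give a theta. This is the same argument as in the proof of Lemma~\ref{l:CattachW} comparing $x'$ and $x''$. This yields the ``moreover'' claim.

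\textbf{(iv)} Finally, I read off the daisy. If $ys\notin E(G)$, then all neighbours of $y$ are adjacent to $s$, i.e.\ $y$'s unique neighbour is the second vertex of $S$. The hole is $csSs'c$ with petals $P$ (centred at $c$) and $H\sm S$ (centred at $c$ as well). This must be rechecked: in the first example of Fig.~\ref{f:attach}, the centres are $s$ and $c$. If $ys\in E(G)$, the hole is $csyt'Ss'c$, with the three petals described for the second example. I must verify the petal conditions, in particular that no edge has both ends adjacent to the centre; this follows from triangle-freeness.

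The main obstacle is step (ii) together with the exact control of $N_S(y)$ in (iii). There, several holes must be built carefully so that Lemma~\ref{l:vAtachHole} yields a theta or wac in every configuration. I would first try to mirror the case analysis of the proof of Lemma~\ref{l:CattachW}.
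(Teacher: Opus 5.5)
Your skeleton matches the paper's: Lemma~\ref{l:CattachW} gives the sector $S$, you take a shortest path $P$ from a neighbour of $c$ to a vertex seeing the interior of $S$, you show exactly one of $s,s'$ sees the interior of $P$, a theta gives the ``moreover'' part, and you read off the daisy. Your step~(i) is correct.

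The gap is in the two places you leave open, the degenerate case $N_W(Z)=\{c,s,s'\}$ and step~(ii). These are where an argument is actually needed, and the routes you suggest do not work. In the degenerate case, $c$ cannot serve as the third path: $c$ has a neighbour in $Z$, possibly on your connecting path. Also, $S$ alone only closes a hole. In~(ii), applying Lemma~\ref{l:vAtachHole} to $c$ and your hole $H'$ only shows that $(H',c)$ is a wheel, and none of the vertices you name can be a second centre adjacent to $c$. Indeed, $s$ lies on $H'$, and $y$ is not adjacent to $c$. Finally, $x$ (whose only neighbour in $W$ is $c$) has at most one neighbour on $H'$.

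The missing idea is the same in both places. Take a \emph{shortest} path $R$ from a neighbour of $s$ to a neighbour of $s'$, inside $Z$ in the degenerate case and inside the interior of $P$ in~(ii). Its vertices see no vertex of $H$ other than $s,s'$: in the degenerate case because $N_H(Z)\subseteq\{s,s'\}$, and in~(ii) by minimality of $P$ and Lemma~\ref{l:CattachW}. By the choice of $R$, only its ends see $s$ and $s'$. If $R$ is a single vertex, it has exactly two neighbours in $H$, contradicting Lemma~\ref{l:vAtachHole}. Otherwise $R$ and the two $ss'$-paths of $H$ form an $ss'$-theta; both of these paths have length at least~2, by triangle-freeness and since $c$ has at least three neighbours on $H$. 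In your language, the interior of $S$ is a third $ss'$-path attached to $H'$, and $c$ plays no role.

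A few smaller points remain:
\begin{itemize}
\item In~(iii), you reuse $y'$ with a different meaning. The ``hole'' you define there is not a hole when $ys\in E(G)$, and it is not needed.
\item The theta for~(iii) is formed by $ytSs$, $yt'Ss'cs$ and the part of $P$ from $y$ to the neighbour of $s$ closest to $y$. Here $t,t'$ are the neighbours of $y$ in $S$ closest to $s,s'$. When $t=t'$, the same paths give a $ts$-theta instead.
\item In~(iv), $P$ is centred at $s$, as you noticed.
\item In the three-petal case of~(iv), you need Lemma~\ref{l:vAtachHole} to ensure that $y$ has a neighbour strictly between $s$ and $t'$. Otherwise $y$ would not see an internal vertex of the petal formed by the interior of $sSt'$.
\end{itemize}
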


\begin{proof}
  By Lemma~\ref{l:CattachW}, there exists a sector $S = s \dots s'$ of
  $W$ such that $N_H(Z)\subseteq S$.  Suppose first that no internal
  vertex of $S$ has a neighbour in~$Z$.  Then, since $N_W(Z)$ is not a
  clique, we have $N_W(Z) = \{s, c, s'\}$.  Hence, $H$ and a shortest
  path of $Z$ from a neighbour of $s$ to a neighbour of $s'$ form an
  $ss'$-theta, a contradition.

  So let $P = x \dots y$ be a shortest path in~$Z$ such that
  $xc \in E(G)$ and $y$ has a neighbour in the interior of $S$. By
  Lemma~\ref{l:VattachW}, $x\neq y$.  Let $t$ (resp.\ $t'$) be the
  neighbour of $y$ in~$S$ that is closest to $s$ (resp.\ $s'$) along
  $S$.  If both $s$ and $s'$ have neighbours in~$P\sm y$, then $H$ and
  a shortest path of $P \sm y$ from a neighbour of $s$ to a neighbour of
  $s'$ in~$P\sm y$ form an $ss'$-theta, a contradiction.
  Hence, up to symmetry, we may assume that $s'$ has no neighbour in
  $P\sm y$.

  If $s$ has no neighbour in the interior of $P$, then
  $S \cup P \cup \{c\}$ contains a $cy$-theta (if
  $t\neq t'$) or a $ct$-theta (if $t=t'$).  So, $s$ has at
  least one neighbour in the interior of $P$.

  So, $P$ has all the claimed properties. 
  It remains to prove that  if $y$ has a neighbour in~$S$ that is not adjacent to $s$, then $ys\in E(G)$. 
  Suppose not, that is $t's \notin E(G)$ and $t\neq s$.
  Hence, $y t S s$, $y t' S s' c s$ and a shortest path from $y$ to $s$ in~$P\cup \{s\}$ form a $ys$-theta if $t\neq t'$, or a $ts$-theta if $t=t'$, in both cases a contradiction.  
 
  The fact that $W$ and $P$ form a daisy with two or three petals has been explained in the paragraph before the statement.
\end{proof}

\begin{lemma}
  \label{l:twoPetals}
  Suppose that $G$ is an atomic (theta, triangle, wac, cube)-free
  graph and $D$ is a daisy of $G$ with at least two petals.  If
  some induced connected set $Z\subseteq G \sm D$ has neighbours in some petal
  of $D$, then $G$ has a proper 2-separator or a proper
  $P_3$-separator.
\end{lemma}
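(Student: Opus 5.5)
The plan is to take $Z$ to be a whole connected component of $G\sm D$ that has a neighbour in some petal $P_i = x_i\dots y_i$ (centred at $c_i$), and to exhibit a proper 2-separator or a proper $P_3$-separator. The key tool is Lemma~\ref{l:CattachW}, applied to the wheels $W_j = (H_j, c_j)$ formed by a petal and the rest of the hole. Since $D$ has at least two petals, every vertex of $D$ lies in some such wheel, and every petal other than $P_j$ lies inside the sector $Q_j$ of $W_j$.

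First I would localise $N_D(Z)$. Applying Lemma~\ref{l:CattachW} to $W_i$ shows that $N_{H_i}(Z)$ lies in one sector of $W_i$. Because $Z$ meets $P_i$, this sector is an external sector of $W_i$, i.e. a subpath $T = t\dots t'$ of $c_{i-1}x_iP_iy_ic_{i+1}$ whose ends are adjacent to $c_i$. Next take another petal $P_j$, $j\neq i$. Then $P_i$ lies in $Q_j$, so by Lemma~\ref{l:CattachW} for $W_j$, $Z$ has no neighbour in the interior of $P_j$. The same argument excludes the other hole vertices outside $T$. The only remaining attachment to control is $c_i$ itself.

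This splits the proof into two cases. If $c_i$ has no neighbour in $Z$, then $N_D(Z)\subseteq T$. I would then pick $a,b$ to be the extreme neighbours of $Z$ along $T$ and argue that $\{a,b\}$ is a 2-separator. One side contains $Z$ together with the interior of $aTb$; the other side contains the rest of $D$. The main obstacle is that further components of $G\sm D$ could attach elsewhere and reconnect the two sides. To handle this, I would choose $Z$ (or the segment of $T$) extremal, for example with $aTb$ inclusion-minimal or maximal among all components attaching to the interior of petals. I would also use Lemma~\ref{l:eligible2K1} and the assumption that $G$ is atomic to reduce to the case where the separated side is not just the path $aTb$. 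That side is not a path because it contains $Z$ plus a vertex of $T$, or $Z$ has two attachments giving a non-path. The other side is not a path because it contains the hole and another petal, and this is exactly where the hypothesis of at least two petals is used.

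If $c_i$ has a neighbour in $Z$, Lemma~\ref{l:CattachPetal} applied to $W_i$ gives a path $P$ in $Z$ forming a daisy with $W_i$. Its hub structure suggests a $P_3$-separator $a c_i b$ with $a,b\in T$, taken as the extreme neighbours of $Z\cup\{c_i\}$ along $T$. The component containing $Z$ would be the tight side, since every $aYb$-path passes near $c_i$. The component containing the rest of the hole, where $Q_i$ avoids the neighbours of $c_i$, would be the loose side. I would then check properness: $c_i$ has neighbours on both sides (namely $c_{i\pm1}$ and $Z$), and neither side is an $ab$-path. As in the first case, the hard step is guaranteeing that no other component of $G\sm D$ bridges the two sides. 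I would handle this by the same extremal choice over all components attaching to petals, combined with Lemma~\ref{l:CattachW} applied to the wheels built from the enlarged daisy.
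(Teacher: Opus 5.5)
Your localisation of $N_D(Z)$ inside $T\cup\{c_i\}$ is sound. Applying Lemma~\ref{l:CattachW} to $W_j$ and the connected set $Z\cup P_i$ is a fine variant of the paper's use of $W_i$ and $Z\cup P_j$. Note, however, that $P_i$ does not lie in $Q_j$; it only attaches to $H_j$ inside $Q_j$, at an interior vertex of $Q_j$.

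The genuine gap is the separation step. You flag it as the main obstacle but leave it open, and your concrete choice in the first case is wrong. Taking $a,b$ to be the extreme neighbours of $Z$ on $T$ does not give a separator in general, and no extremal choice among single components fixes this. For instance, let $T=t_0t_1\dots t_6$ be an external sector of $W_i$ in a daisy $D$ with two petals. Add two components $Z_1=p_1p_2p_3$ and $Z_2=z_1z_2z_3$ of $G\sm D$, with edges $p_1t_1,p_2t_2,p_3t_3$ and $z_1t_2,z_2t_3,z_3t_4$. The blocks with respect to $\{t_0,t_6\}$ are $D$ and the daisy with hole $c_it_0t_1\dots t_6c_i$ and petals $Z_1,Z_2$ centred at $t_2,t_3$. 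So by Lemmas~\ref{l:decProper2sepAtomic}, \ref{l:decProper2sep} and~\ref{l:decCube}, this graph is atomic and (theta, triangle, wac, cube)-free. Yet neither $\{t_1,t_3\}$ nor $\{t_2,t_4\}$ separates it, because the two spans cross and are incomparable.

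The missing idea is to use the ends $t,t'$ of the whole sector $T$. Your localisation argument applies to every connected subset of $G\sm D$ with a neighbour in $P_i$. Moreover, an interior vertex of $T$ lies in $P_i$ and belongs to no other sector of $W_i$. Hence every component of $G\sm D$ with a neighbour in the interior of $T$ has all its neighbours in $D$ inside $T\cup\{c_i\}$. Let $X$ be the union of the interior of $T$ and these components; then $N(X)\subseteq\{t,c_i,t'\}$. So $\{t,t'\}$ (if $c_i$ has no neighbour in $X$) or $tc_it'$ (otherwise) separates $X$ from the rest. No extremal choice and no appeal to Lemma~\ref{l:CattachPetal} is needed; this is the paper's argument. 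Your $Z$ lies in $X$, since otherwise $N_D(Z)\subseteq\{t,c_i,t'\}$ gives a clique separator or a $tt'$-theta.

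In your second case, loose and tight are also reversed. The side containing $Z$ contains the interior of $T$, and $T$ is a $tt'$-path with no internal vertex adjacent to $c_i$, so that side is loose. The other side is then tight by Lemma~\ref{l:eligibleP3}. Your justification via $Q_i$ fails because $Q_i$ is a $c_{i-1}c_{i+1}$-path, not a $tt'$-path. Indeed, $H_i$ minus the interior of $T$ is a $tt'$-path containing a third neighbour of $c_i$ in its interior. Properness survives after swapping the roles, but as written that step is incorrect.
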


\begin{proof}
  Since $G$ is atomic, by Lemmas~\ref{l:eligible2K1}
  and~\ref{l:eligibleP3}, to check that a 2-separator or a
  $P_3$-separator is proper, only the condition about
  $G[X\cup \{a, b\}]$ and $G[Y \cup \{a, b\}]$ not being an $ab$-path,
  and the condition about the loose component of a $P_3$-separator
  have to be checked.

  Suppose that some vertex of $Z$ has a neighbour in some petal $P_i$
  of $D$.

  We claim that there exists an external sector $S = s\dots s'$ of
  $W_i = (H_i, c_i)$ such that $N_{D}(Z)$ is included in $S\cup \{c_i\}$. Indeed, by
  Lemma~\ref{l:CattachW}, $N_{H_i}(D)$ is included in some sector $S$
  of $W_i$, and since some vertex of $Z$ has neighbours in $P_i$, $S$
  must be an external sector of $W_i$.  Let us check that $N_D(Z)$ is
  included in $S\cup \{c_i\}$.  So suppose for a contradiction that
  $Z$ contains a vertex with a neighbour $x\in D \sm (S\cup \{c_i\})$.
  From the definition of $S$, $x$ is not in $W_i$ (in particular,
  $x\notin C$).  So, $x\in P_j$ for some petal $P_j$, $j\neq i$.  Now
  $Z\cup P_j$ is connected, so it contradicts Lemma~\ref{l:CattachW}
  applied to $W_i$, because $P_j$ has a neighbour in
  $C \sm \{c_{i-1}, c_{i}, c_{i+1}\}$, so a neighbour in $H_i \sm S$.

  From the claim that we just proved, we see that $\{s, s'\}$ is a
  proper 2-separator of $G$ (if $c_i$ has no neighbour in $Z$) or
  $\{s, c_i, s'\}$ is a proper $P_3$-separator of $G$ (the component
  containing $Z$ is loose because of the path $S$).  Note that the
  condition that none of $X\cup \{s, s'\}$ and $Y\cup \{s, s'\}$ is a
  path is satisfied.  This is because of $Z$ for the component that
  contains $Z$, and because of the second petal for the other side
  (this is only the place where we need the second petal).
\end{proof}

\section{Proof of Theorem~\ref{th:Decomp}}
\label{sec:proofs}

\decomp*

\begin{proof}
  We may assume that $G$ is atomic.  By Lemma~\ref{l:decCube}, we may assume that $G$ is cube-free.  By Theorem~\ref{cor:us}, we may assume that $G$ contains a wheel. So $G$ contains a daisy with at least one petal, and we consider such a daisy $D$ of $G$ that is maximal (in the sense of inclusion for the vertex-set).  We may assume that $G\sm D$ has a connected component $Z$ for otherwise $G$ is basic.

To describe $D$, we use notations as in the definition of a daisy.
Since $D$ has at least one petal, we assume up to
symmetry that $c_1$, \dots, $c_\ell$ where $1 \leq \ell \leq k$ are the centres of the petals of $D$.

Suppose first that some vertex of $Z$ has a neighbour in some petal $P_i$ of~$D$.  By Lemma~\ref{l:twoPetals}, we may then assume that $D$ has a unique petal, so it is the wheel $W_1 = (H_1, c_1)$.  
By Lemma~\ref{l:CattachW}, $N_{H_1}(Z)$ is included
in some sector $S = s \dots s'$ of $W_1$.  If $N_{D}(Z) \subseteq S$, then $\{s, s'\}$ is a proper 2-separator of $G$.  Otherwise, $Z$ contains a neighbour of $c_1$, and by Lemma~\ref{l:CattachPetal},
either $N_{D}(Z)$ is a clique (so $G$ has a clique separator, a contradiction), or $D$ can be extended to a daisy with two or three petals, a contradiction to the maximality of $D$.

From here on, we may assume that no vertex of $Z$ has a neighbour in
some petal $P_i$ of $D$.  If no centre of a petal has a neighbour in $Z$,
since $G$ is atomic, then some vertex of $C$ that is not the centre of
a petal has a neighbour in $Z$ (in particular, $\ell < k$).  So,
$\{c_{\ell + 1}, c_k\}$ is a clique separator (a contradiction), or a
proper 2-separator.  Hence, we may assume that some centre $c_i$ of
some petal $P_i$ has a neighbour in $Z$.

Let us apply Lemma~\ref{l:CattachPetal} to $Z$ and $W_i$. If
$N_{W_i}(Z)$ is a clique, then so is $N_D(Z)$ (because
$N_D(Z) = N_{W_i}(Z)$), and $G$ has a clique separator, a
contradiction.  So, $Z$ contains a path $P = x \dots y$ of length at
least~1, $xc_i\in E(G)$, and $y$ has a neighbour in the interior of
$Q_i$ (recall that $Q_i$ is the sector of $W_i$ contained in $C$).
Moreover, some end of $Q_i$, say $c_{i+1}$ up 
to symmetry, has neighbours in the interior
of $P$.

If the only neighbour of $y$ in $Q_i$ is $c_{i+2}$, then $D$ has no
petal centred at $c_{i+1}$, for otherwise, such a petal $P_{i+1}$
together with $P$ and $C\sm c_{i+1}$ would form a theta from $c_i$ to
$c_{i+2}$.  It follows that $i=\ell$, $D$ has no petal centred at
$c_{\ell +1}$ (in particular, $\ell < k$).  Hence adding $P$ to $D$
yields a daisy that contradicts the maximality of $D$.

Hence, $y$ has a neighbour in $Q_i$ that is not adjacent to $c_{i+1}$ (so it is not $c_{i+2}$). By Lemma~\ref{l:CattachPetal},
$yc_{i+1} \in E(G)$.  We denote by $Q_{i+1}$ the path $C\sm c_{i+1}$.
Let $c_{j}$ be the neighbour of $y$ in that is closest to $c_i$ along
the path $Q_{i+1}$. Note that $j \neq i$ since $x \neq y$.  We now
consider a daisy $D'$ defined as follows.  The hole is
$C' = c_i c_{i+1} y c_j Q_{i+1} c_i$.  The petals are $P\sm y$,
$c_{i+2} Q_{i+1} c_{j-1}$, and all petals of $D$ centred at some
vertex of $c_{j+1} Q_{i+1} c_i$ (in particular $P_i$).  If all petals
of $D$ are petals of $D'$, then $D'$ contradicts the maximality of
$D$.  Hence, $D$ has at least one petal $R$ that is not a petal of
$D'$, and that is therefore centred at some vertex in
$c_{i+1} Q_i c_j$.  This petal $R$ is therefore a connected
subset of $G\sm D'$ that has a neighbour in some petal of $D'$.  Hence,
by Lemma~\ref{l:twoPetals} applied to $D'$, $G$ has a proper
2-separator or a proper $P_3$-separator.
\end{proof}

\section{Consequences}
\label{sec:applications}

We here collect several consequences of Theorem~\ref{th:Decomp}.

\subsection{Even-hole-free, even-wheel-free and bipartite graphs}
\label{ssec:ehf}

We here explain why our results specialise to a structure theorem for (theta, triangle, even wheel)-free graphs, (even hole, triangle)-free graphs and bipartite (theta, wac)-free graphs.  The following proves the inclusions of classes claimed in the introduction. 

\begin{lemma}
  \label{l:oddS}
  The cube, every theta and every even wheel contains an even hole, and every triangle-free wac contains either an even wheel or a theta (and therefore an even hole).
\end{lemma}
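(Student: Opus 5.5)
We check each part of Lemma~\ref{l:oddS} directly. For the cube, with the labelling of Lemma~\ref{l:decCube} ($a_ib_j\in E$ iff $i\neq j$), the cycle $a_1b_2a_3b_1a_1$ has length~4. It is chordless because its only possible chords are $a_1a_3$ and $b_1b_2$, which are non-edges in a bipartite graph. So it is an even hole.

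For a theta with paths $P_1,P_2,P_3$ between $a$ and $b$, two of the three lengths have the same parity by pigeonhole. The union of those two paths is a hole of even length (at least~4, since each path has length at least~2).

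For an even wheel $(H,c)$, let $c$ have $2m$ neighbours on $H$, splitting $H$ into $2m$ sectors of lengths $\ell_1,\dots,\ell_{2m}$. Each sector $S_j$ together with $c$ forms a hole of length $\ell_j+2$ when $\ell_j\geq 2$. If $\ell_j=1$, the ends of $S_j$ are adjacent and together with $c$ would form a triangle; a wheel with a triangle is not excluded by the statement, so this case needs care. Suppose no sector-hole is even. Then every sector with $\ell_j\ge2$ is odd. Sum the lengths: $|H|=\sum \ell_j$. Consider the hole formed by $c$ and the path $H$ minus the interior of two consecutive sectors. I plan a parity argument. Since there are $2m$ sectors, $\sum_j(\ell_j+1)$ is congruent to $|H|$ modulo~2 when adding an even count. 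If $|H|$ is even, $H$ itself is an even hole. So assume $|H|$ is odd. Then some sector has even length, because an even number of odd lengths sums to an even number. If that sector has length at least~2, it gives an even hole $cS_jc$ of length $\ell_j+2$. Even length excludes $\ell_j=1$, so this sector indeed has length at least~2, and we are done. This is the standard argument.

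For a triangle-free wac $(H,c,c')$, assume it has no theta. By Lemma~\ref{l:vAtachHole}, which holds since the wac is triangle-free and theta-free, $(H,c)$ and $(H,c')$ are wheels. If either is even, we are done, so suppose both $c$ and $c'$ have an odd number of neighbours on $H$. Triangle-freeness makes $N_H(c)\cap N_H(c')=\emptyset$. The main step, and the main obstacle, is to find an even wheel.

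I would first try the hole $H'$ obtained from a sector $S=s\dots s'$ of $(H,c')$ by going $c'sSs'c'$. If the neighbours of $c$ in $H$ lie in $S$, the wac is a turtle. Then $c$ has neighbours in $H'$ consisting of $c'$ plus its odd number of neighbours in $S$, which is an even count. Provided this count is at least~3, $(H',c)$ is an even wheel. If $c$ has only one neighbour in $S$, then $c$ has exactly two neighbours in $H'$, namely $c'$ and one vertex of $S$. This contradicts Lemma~\ref{l:vAtachHole}, so a theta is present.

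In the crossing case, choose a sector $S$ of $(H,c')$ containing an odd number $r\geq1$ of neighbours of $c$; such a sector exists because the counts over all sectors sum to an odd number. Then $c$ has $r+1$ neighbours in $c'sSs'c'$, an even number. The same dichotomy applies: if $r+1\ge 4$ the result is an even wheel, and if $r+1=2$ it gives a theta by Lemma~\ref{l:vAtachHole}. Since the parity bookkeeping is the delicate point, I would double-check the endpoint conventions, noting that $s,s'$ are not neighbours of $c$.
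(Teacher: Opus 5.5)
The one real error is in the cube. With the labelling $a_ib_j\in E$ if and only if $i\neq j$, the pair $b_1a_1$ is not an edge, so $a_1b_2a_3b_1a_1$ is not a cycle at all. You need two $a$-indices and two $b$-indices forming disjoint sets. For example, $a_1b_2a_3b_4a_1$ is an induced $C_4$ (a face of the cube). Alternatively, as in the paper, deleting $a_4$ and $b_4$ leaves the induced six-cycle $a_1b_2a_3b_1a_2b_3a_1$. The slip is trivial to repair, but as written that step is false.

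Everything else is correct and follows the paper's proof.
\begin{itemize}
\item \emph{Thetas.} You use the same pigeonhole on parities as the paper.
\item \emph{Even wheels.} If $|H|$ is odd, an even number of sectors forces a sector of even length $\ell\ge 2$, which gives the even hole $cSc$. The sentences about ``$H$ minus the interior of two consecutive sectors'' and $\sum_j(\ell_j+1)$ are unused and can be deleted.
\item \emph{Triangle-free wacs.} Your choice of a sector $S$ of $(H,c')$ containing an odd number $r$ of neighbours of $c$ is the contrapositive of the paper's argument. The paper observes that, absent an even wheel or a theta, every sector of $(H,c')$ contains an even number of neighbours of $c$, so $(H,c)$ itself would be even.
\end{itemize}
Some minor points on the wac part:
\begin{itemize}
\item $(H,c)$ and $(H,c')$ are wheels by the definition of a wac, so no lemma is needed there.
\item The separate turtle case is subsumed by the crossing case. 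Its subcase with a single neighbour in $S$ cannot occur, since $c$ has at least three neighbours in $H$.
\item You should say explicitly that $c'sSs'c'$ is a hole because $S$ has length at least~$2$; otherwise $c'ss'$ is a triangle.
\end{itemize}
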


\begin{proof}
  The cube obviously contains $C_6$.  In a theta, two amongst the three paths have the same parity and their union is therefore an even hole.  If an  even wheel $W=(H, c)$ is even-hole-free, then  all  sectors $S$ of $W$ have odd length for otherwise $cSc$ has even length.  Therefore, the length of $H$, that is the sum of the lengths of the sectors of $W$, is even, a contradiction. 
  
  Let $W = (H, c, c')$ be a triangle-free wac. If $W$ contains no even wheel, then $c$ has an even number of neighbours in each sector $S$ of $(H, c')$, for otherwise $(c'Sc', c)$ is an even wheel or a theta. Moreover, these neighbours are in the interior of $S$ since $G$ is triangle-free.   Hence, the number of neighbours of $c$ in $H$ is the sum of even numbers, so $(H, c)$ is an even wheel, a contradiction.
\end{proof}

The following lemma states that even-wheel-free, even-hole-free and bipartite daisies are easily described.  
Hence, it is fair to consider Theorem~\ref{th:Decomp} as a decomposition theorem for (theta, triangle, even wheel)-free graphs (just replace daisies by even-wheel-free daisies in the definition of the basic class),  as a decomposition theorem for (even-hole, triangle)-free graphs (just remove the cube and replace daisies by even-hole-free daisies in the definition of the basic class), and as a decomposition theorem for bipartite (theta, wac)-free graphs (just replace daisies by bipartite daisies in the definition of the basic class).

\begin{lemma}
  \label{l:daisyEHF}
  A daisy is even-wheel-free if and only if every centre of some petal has an odd number of neighbours in the  petal. 

  It is even-hole-free if and only if it is even-wheel-free, based on an odd hole and every external sector has odd length.
  
  It is bipartite if and only if it is based on an even hole and every external sector has even length. 
\end{lemma}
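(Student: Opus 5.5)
The plan is to first list all holes and all wheels of a daisy $D$ with hole $C$ and petals $P_i$, and then check the three parity conditions on that list. Since $D$ is (theta, triangle, wac)-free, the wheels of $D$ are controlled by Lemma~\ref{l:VattachW}. We have already observed that every centre of a wheel of $D$ lies on $C$, and in fact is some $c_i$ carrying a petal. I claim the holes of $D$ are exactly the following: $C$ itself; the holes $H_i=P_iQ_i$ (with the neighbours of petal edges forced); and the holes $c_i S c_i$ where $S$ is an external sector of $W_i$. The reason is this. A hole using some vertex of $P_i$ must enter $P_i$ through $c_{i-1}$, $c_{i+1}$ or $c_i$. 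If it passes through $c_i$, it uses a single sector of $W_i$. Otherwise it traverses the whole of $P_i$ from $c_{i-1}$ to $c_{i+1}$, avoids $c_i$, and then has to close up through $Q_i$. Two petals cannot both be traversed, since $Q_i$ contains the centre $c_j$ with $j\ne i$ whenever there is a petal $P_j$, and the centres lie consecutively on $C$.

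It follows that every wheel has rim $H_i$ and centre $c_i$. I would check that for a rim of the form $c_jSc_j$ or $C$, no vertex has three neighbours on it. This is the step I expect to be the main obstacle, because one has to verify that centres of adjacent petals do not create extra wheels. For instance, $c_{i+1}$ has neighbours on $H_i$: namely $y_i$, $c_{i+2}$, and possibly vertices of $P_i$? By definition $P_i$ attaches to $C$ only at $c_{i-1}$, $c_i$, $c_{i+1}$, so $c_{i+1}$ has exactly the neighbours $y_i$ and $c_{i+2}$ in $H_i$. These are two neighbours, which is not a wheel (a theta cannot occur since $D$ is theta-free, so this case is harmless).

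With this list, the first claim is immediate. The wheel $W_i$ is even exactly when $c_i$ has an even number of neighbours in $H_i$, and since $c_i$ has exactly two neighbours in $Q_i$, this happens exactly when $|N_{P_i}(c_i)|$ is even.

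For even-hole-freeness, I require $C$ to be odd. Each hole $c_iSc_i$ has length $|S|+2$, so it is odd if and only if the external sector $S$ has odd length. It remains to handle $H_i$. Its length equals $|C|-2$ plus the sum of the lengths of the external sectors of $W_i$, minus $2$. The external sectors together form the path $c_{i-1}P_ic_{i+1}$, whose total length is $\sum|S|$, and there are $|N_{P_i}(c_i)|+1$ of them. So if every $|S|$ is odd and $c_i$ has an odd number of petal neighbours, the number of external sectors is even, and $|H_i|=|C|-2+\sum |S|$ is odd. Conversely, suppose $D$ is even-hole-free. Then $D$ is even-wheel-free by Lemma~\ref{l:oddS}, $C$ is odd, and every external sector is odd. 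This gives the second claim.

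For bipartiteness, $D$ is triangle-free, so it is bipartite if and only if it has no odd hole. I would run the same parity computation on the list of holes. The hole $C$ is even, every external sector has even length, and then $|H_i|=|C|-2+\sum|S|$ is even regardless of the number of sectors. Conversely, an even-length requirement on $C$ and on each $c_iSc_i$ is clearly necessary.
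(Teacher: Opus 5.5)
Your strategy (list the holes and wheels of the daisy, then check parities) is the same as the paper's. However, your list of holes is wrong, and the error carries over to the wheels. The claim that two petals cannot both be traversed fails as soon as $D$ has at least three petals. Suppose $P_1,P_2,P_3$ are petals centred at $c_1,c_2,c_3$. Then the paths $c_kx_1P_1y_1c_2$ and $c_2x_3P_3y_3c_4$, together with the path $c_4c_5\dots c_k$ of $C$ (a single vertex if $k=4$), form a hole. Indeed, $c_1$ and $c_3$ are not on it, $P_1$ attaches to it only through the edges $c_kx_1$ and $y_1c_2$, $P_3$ only through $c_2x_3$ and $y_3c_4$, and there are no edges between petals.

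Your justification (``$Q_i$ contains the centre $c_j$'') does not work. After traversing $P_i$, the hole only needs some $c_{i+1}c_{i-1}$-path avoiding $c_i$. Such a path can bypass a centre $c_j\in Q_i$ by going through $P_j$, so it is not forced to be $Q_i$. The correct list, which is the one the paper uses, has two kinds of holes:
\begin{itemize}
\item the holes $c_jSc_j$ with $S$ an external sector of some $W_j$;
\item the holes obtained from $C$ by replacing a family of pairwise edge-disjoint paths $c_{j-1}c_jc_{j+1}$ by the corresponding paths $c_{j-1}x_jP_jy_jc_{j+1}$.
\end{itemize}
For the same reason, ``every wheel has rim $H_i$'' is false: the hole above is the rim of wheels centred at $c_1$ and at $c_3$. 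So, as written, the `if' directions of all three statements are checked only on part of the holes and wheels.

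The repair is short and reuses your computations. Let $(R,c_j)$ be a wheel. The only neighbours of $c_j$ outside $C\cup P_j$ are the ends $x_{j+1}$ and $y_{j-1}$ of the adjacent petals (when these exist). These have degree~$2$, so they cannot lie on a rim avoiding $c_j$. Hence $R$ meets $P_j$, so it contains the whole path $c_{j-1}x_jP_jy_jc_{j+1}$. Therefore $c_j$ has exactly $|N_{P_j}(c_j)|+2$ neighbours on $R$, whatever the rest of $R$ is. This is what the paper means by ``the same number of sectors as the wheels $W_i$'', and it gives the first statement.

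For holes of the second kind, each replacement changes the length by $\ell_j-2$. Here $\ell_j$, the length of $c_{j-1}x_jP_jy_jc_{j+1}$, is the sum of the lengths of the $|N_{P_j}(c_j)|+1$ external sectors of $W_j$. Under the hypotheses of the second statement (even-wheel-free, i.e.\ $|N_{P_j}(c_j)|$ odd, and all external sectors odd), $\ell_j$ is even. The same holds under the hypotheses of the third statement (all external sectors even). So every such hole has the parity of $C$.

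Two minor slips:
\begin{itemize}
\item The correct formula is $|H_i|=|C|-2+\sum|S|$, without your extra ``minus $2$''; parity is unaffected.
\item In your ``main obstacle'' paragraph, $c_{i+1}$ lies on $H_i$, so it is not a candidate centre for that rim.
\end{itemize}
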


\begin{proof}
  We use the notations as in the definition of a daisy $G$. 
  The wheels of $G$ are centred on some vertex $c_i$, and has the same number of sectors as the wheels $W_i$, so the first statement follows.  
  
  The holes of $G$ are either the hole $C$ with some edge-disjoint $P_3$'s $c_{i-1} c_i c_{i+1}$ replaced by some petal, or some hole $c_iSc_i$ where $S$ is some external sector of some  wheel $W_i$, so the second statement follows.
  
  The third statement is clear. 
\end{proof}

\subsection{Ear decomposition} 
\label{ssec:ear}

One of the main result in~\cite{DBLP:journals/jgt/ConfortiCKV00} is a
construction of all (theta, triangle, even wheel)-free graphs (and in particular of all (even hole, triangle)-free graphs) by a ``sequence of good ear additions''.  
Let us reprove this result and first define the notions that we need.  
A $P_3$, say $acb$, in a graph $G$ is \emph{good} if for every  $ab$-path $P$ in $G\sm c$,  no internal vertex of $P$ is adjacent to $c$.  

The following lemma gives a characterisation of good $P_3$'s.  
Note that Condition~\ref{i:hole} is essential as shown in Figure~\ref{fig:notGood}. 
A daisy $G$ with two petals centred at $c_1$ and $c_2$ is represented.  
There is a non-good $P_3$:  $a c_1 b$. However, it satisfies Conditions~\ref{i:center} and~\ref{i:spoke} of the lemma. Also, it can be checked that adding an ear with respect to $ac_1b$ (see the definition below) creates a $bc_2$-theta. 

\begin{figure}
    \centering
    \includegraphics{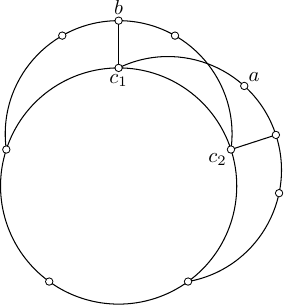}
    \caption{A not so good $P_3$}
    \label{fig:notGood}
\end{figure}

\begin{lemma}
  \label{l:goodEquiv}
  Let $G$ be an atomic (theta, triangle)-free graph.
  A path $acb$ of~$G$ is a good $P_3$ if and only if it satisfies the following three conditions:    
  \begin{enumerate}
  \item\label{i:hole} some hole of $G$ goes through $a$, $c$ and $b$, 
  \item\label{i:center}  there is no wheel $(H, c)$ in $G$ such that $a, b \in H$ and
  \item\label{i:spoke} there is no wheel $(H, c')$ in $G$ such that $a, c, b \in H$ and $cc'\in E(G)$.
  \end{enumerate}
\end{lemma}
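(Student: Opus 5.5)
We prove the two directions separately, working with a fixed path $acb$ of the atomic (theta, triangle)-free graph $G$.

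\emph{Good implies the three conditions.} Assume $acb$ is good.

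For Condition~\ref{i:hole}, note that $\{a\}$ is not a clique separator, so $G\sm a$ is connected. We want an $ab$-path in $G\sm c$, and we argue that one exists. Suppose $a$ and $b$ are in different components of $G\sm c$. Then, $G$ being atomic, the component of $G \sm \{a, c\}$ containing $b$ must reach the rest of the graph through $a$, which gives an $ab$-path avoiding $c$ anyway. So take a shortest $ab$-path $P$ in $G\sm c$. By goodness, no internal vertex of $P$ is adjacent to $c$. Hence $P$ together with $c$ is a hole through $a$, $c$ and $b$.

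For Condition~\ref{i:center}, suppose $(H,c)$ is a wheel with $a,b\in H$. Some component of $H\sm\{a,b\}$ contains a neighbour of $c$, because $c$ has at least three neighbours on $H$. That component yields an $ab$-path in $G\sm c$ with an internal neighbour of $c$, contradicting goodness.

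For Condition~\ref{i:spoke}, suppose $(H,c')$ is a wheel with $a,c,b\in H$ and $cc'\in E(G)$. The path $H\sm c$ is an $ab$-path avoiding $c$. If $c'\notin H\sm c$, reroute it through $c'$: take the subpath from $a$ to a neighbour of $c'$ on the $a$-side, then $c'$, then onward to $b$. Since $c'$ has at least three neighbours on $H$, at least one of them differs from $a,b$ and is not adjacent to $c$ by triangle-freeness, so there is freedom in the choice. The resulting path is chordless after shortcutting and contains $c'$ as an internal vertex adjacent to $c$, contradicting goodness. The care needed is to make the rerouted path chordless while keeping $c'$ internal. We use the neighbours of $c'$ closest to $a$ and to $b$ on $H\sm c$, which are distinct from $a,b$ by triangle-freeness, since $c'a, c'b$ would create triangles with $c$.

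\emph{The three conditions imply good.} Suppose Conditions~\ref{i:hole}--\ref{i:spoke} hold, and let $P$ be an $ab$-path in $G\sm c$ with some internal vertex adjacent to $c$. Let $C=acbRa$ be the hole given by Condition~\ref{i:hole}. We derive a contradiction.

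Consider the hole formed by $P$ and $c$ where possible. More usefully, look at $P\cup C$. If $c$ has at least two neighbours on $P$, then $P\cup\{c\}$ already forms a hole $H'=aPb$ plus $c$ with $c$ having at least three neighbours among $a$, $b$ and $P$'s interior. By Lemma~\ref{l:vAtachHole}, $(aPb\,\cup$ suitable closure$)$ yields a wheel centred at $c$ containing $a,b$. Precisely, $P$ itself is chordless with $a,b$ as ends, and $c$ is adjacent to $a$, $b$ and an interior vertex. So this is a wheel $(H,c)$ with $H$ the hole formed by $P$ together with… however $a,b$ are non-adjacent, so $P$ alone is not a hole.

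So instead we choose $P$ minimal and combine it with $R$. Let $d$ be an internal vertex of $P$ adjacent to $c$. The graph $R\cup P$ contains a hole through $a$ and $b$ unless $P$ and $R$ interact. Here we apply the standard argument: the connected set $P^\circ$ (interior of $P$) attaches to the hole $C$. By Lemma~\ref{l:vAtachHole} and the theta-freeness, either $d$ sees at least three vertices of $C$, giving a wheel centred at $d$ whose rim contains $a,c,b$ with $dc\in E$, contradicting Condition~\ref{i:spoke}. Otherwise a minimal subpath of $P^\circ$ from a $c$-neighbour to $R\cup\{a,b\}$ yields, together with $C$, either a theta (contradiction) or a hole $H$ through $a,b$ avoiding $c$ on which $c$ has at least three neighbours, contradicting Condition~\ref{i:center}. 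Alternatively it yields a wheel centred at a vertex adjacent to $c$ with rim through $a,c,b$, contradicting Condition~\ref{i:spoke}.

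The main obstacle is this last case analysis, namely how a shortest path from $d$ to $R$ attaches to $C$. We handle it as in the proof of Theorem~\ref{cor:us}, choosing the attaching path minimal and using that $c$ already has the two neighbours $a,b$ on any hole containing them.
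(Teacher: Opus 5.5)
Your forward direction is essentially the paper's argument. Two small fixes: for Condition~\ref{i:hole} it suffices that $\{c\}$ is not a clique separator. For Condition~\ref{i:spoke}, add that $c'$ has at least two neighbours in the interior of $H\setminus c$, so the two extreme ones are distinct and, by triangle-freeness, non-adjacent.

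The converse, however, is not proved, and this is a genuine gap. After introducing $C=acbRa$ you only list the kinds of contradiction you hope to reach, and you explicitly postpone ``the main obstacle''. The sketch is also inaccurate in places:
\begin{itemize}
\item The interior of $P$ may share vertices with $R$, so it is not a set that simply attaches to $C$.
\item A single minimal subpath from a neighbour of $c$ to $R\cup\{a,b\}$ cannot by itself produce a hole through both $a$ and $b$.
\item The analogy with Theorem~\ref{cor:us} is misleading. There every wheel is a contradiction, whereas here a vertex $z$ non-adjacent to $c$ with three or more neighbours on $C$ gives a wheel $(C,z)$ that violates none of the three conditions. That case needs its own theta.
\item You never use that $P$ is chordless, and this is needed to rule out a crossing attachment.
\end{itemize}

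Your strategy can be completed, and no minimality of $P$ is needed. Fix an internal vertex $d$ of $P$ adjacent to $c$. By Lemma~\ref{l:vAtachHole} and Condition~\ref{i:spoke}, no vertex outside $C$ that is adjacent to $c$ has a neighbour in $R$. In particular $d\notin C$ and $d$ has no neighbour in $R$.

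Walk along $P$ from $d$ towards $a$. Let $z$ be the first vertex with a neighbour in $R$; then $z\notin C$ and, by the observation above, $zc\notin E(G)$. Let $d'$ be the neighbour of $c$ on $dPz$ closest to $z$. There are three cases:
\begin{itemize}
\item If $z$ has at least three neighbours on $C$, let $s$ and $s'$ be those closest to $a$ and to $b$ along $R$. Then $zsRac$, $zs'Rbc$ and $zPd'c$ form a $zc$-theta.
\item If $z$ has a unique neighbour $w$ on $C$ and $w$ is internal to $R$, then $cd'Pzw$, $caRw$ and $cbRw$ form a $cw$-theta. The option $w=b$ is impossible because $P$ is chordless and $z\in dPa\setminus d$.
\item Exactly two neighbours on $C$ is excluded by Lemma~\ref{l:vAtachHole}.
\end{itemize}
Hence $z$ is the neighbour of $a$ on $P$ and sees only $a$ on $C$. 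The same holds symmetrically on the $b$-side.

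So the interior of $P$ is disjoint from, and anticomplete to, the interior of $R$. Thus $aPbRa$ is a hole on which $c$ has the three neighbours $a$, $b$, $d$, contradicting Condition~\ref{i:center}.

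Completed this way, your route (attaching $P$ to the hole given by Condition~\ref{i:hole}) is shorter than the paper's. The paper instead fixes $P$ with the fewest internal neighbours of $c$ and analyses, case by case, how a minimal subpath of the $c$-free side of the hole links two sectors of $P$.
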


\begin{proof}
  Suppose first that $acb$ is good.  Since $G$ is atomic, there exists a path~$P$ from $a$ to $b$ in $G\sm c$, and this path has no internal vertex adjacent to $c$ since $acb$ is good. Hence, $acb$ and $P$ form a hole, so Condition~\ref{i:hole} is satisfied.  Conditions~\ref{i:center} and~\ref{i:spoke} are clearly satisfied for otherwise, there exists a path from $a$ to $b$ with internal vertices adjacent to $c$, a contradiction to $acb$ being good. 
  
  Let us prove the converse. So, suppose for a contradiction $acb$ is not good while Conditions~\ref{i:hole}, \ref{i:center} and~\ref{i:spoke} are satisfied. 
  So there exists an $ab$-path~$P$, not containing $c$ and containing at least one internal vertex adjacent to $c$.  We choose $P$ subject to the minimality of the  number $k\geq 1$ of internal vertices of $P$ adjacent to $c$.   
  
  Call \emph{sector} of $P$ any subpath of $P$ of length at least~1, whose ends are adjacent to $c$ and whose internal vertices are not (so $P$ has at least two sectors). 
  
  By Condition~\ref{i:hole}, some hole goes through $acb$. So there exists an $ab$-path~$Q$ in $G\sm c$ such that no internal vertex of $Q$ is adjacent to $c$. 
  
  Not all vertices of $Q$ are in $P$ because the interior of $Q$ contains no neighbour of $c$. 
  Since $a$ and $b$ are in distinct sectors of $P$, there exists a subpath $Q'= a'\dots b'$ of $Q$ such that $a'$ and $b'$ are vertices of distinct sectors $S'$ and $T'$ of $P$, and the interior of $Q'$ is disjoint from $P$. Note that $a'$ (resp.\ $b'$) is either one of $a$ or $b$, or is an internal vertex of $S'$ (resp.\ $T'$). 

  Consider a subpath $R= x \dots y$ of $Q'$ and two distinct sectors $S = s\dots r$ and $T = r'\dots t$ of $P$ such that $x$ has neighbours in $S\sm T$, $y$ has neighbours in $T\sm S$ and no sector of $P$ contains $N_P(x) \cup N_P(y)$. 
  The existence of $R$ comes from the fact that the interior of $Q'$  satisfies these conditions.  
  Suppose that $S$, $T$ and $R$ are chosen subject to the minimality of $R$ (possibly $x=y$). 
  
  Up to symmetry, we suppose that $a$, $s$, $r$, $r'$, $t$ and $b$ appear in this order along $P$.  
  Let $x'$ (resp.\ $x''$) be the neighbour of $x$ in $S$ closest to $s$ (resp.\ to $r$) along $S$.  Let $y'$ (resp.\ $y''$) be the neighbour of $y$ in $T$ closest to $r'$ (resp.\ to $t$) along $T$. 
  
  Since $R$ is a subpath of the interior of $Q'$, it contains neither $c$ nor neighbours of $c$ and $R$ is disjoint from $P$.  Also, by the minimality of $R$, an internal vertex of $R$ has no neighbours in $P$, except possibly $r$ if $r=r'$.  Finally, no internal vertex of $r P r'$ has a neighbour in $R$,  except possibly if $x=y$.  
 
  Consider the path $Z = a P x' x R y y'' P b$.  
  We claim that some neighbour of~$c$  in $rPr'$ is not in $Z$.  
  Otherwise $r=r'$ or $rPr'$ is a sector of $P$, and $N_P(x) \cup N_P(y)$ is included in some sector of $P$, a contradiction to the definition of~$R$. 
  Hence, $Z$  contains less neighbours of $c$ than~$P$.  So, the interior of $Z$ contains no neighbour of $c$ for otherwise, $Z$ contradicts the minimality of $k$.  
  This implies that $S$ is the sector of $P$ that contains $a$ (so $s=a$), $T$ is the sector of $P$ that contains $b$ (so $t=b$), $x' \neq  r$ and $y'' \neq  r'$. 

  Suppose $x'\neq x''$.  
  Then $x x' P a c$, $x x'' P r c$ and $x R y y'' P b c$ form an $xc$-theta unless $r'=r$, $y'=y''$ and $y'r \in E(G)$.  
  But then, $y' r c$, $y' P b c$ and $y'y R x x' P a c$ form a $y'c$-theta unless $x''=r$. 
  In this last case, $(aPx'xRyy'Pbca, r)$ is a wheel, a contradiction to Condition~\ref{i:spoke} (note that in this case, $P$, $R$ and $c$ form a daisy with two petals). 
  This proves that $x'=x''$, and symmetrically $y'=y''$ (recall that $x' \neq  r$ and $y'' \neq  r'$).
  
  Suppose that $x'\neq a$.  
  Then, the three paths $x' P a c$, $x' P r c$ and $x' x R y y' P b c$ form a theta unless $r = r'$ and $y'r \in E(G)$.  In this last case, $y'\neq b$, so a symmetric argument proves that $x'r\in E(G)$.  
  Then, the hole $a P x' x R y y' P b c a$ is the rim of wheel centred at $r$, a contradiction to Condition~\ref{i:spoke}.  Hence, $x'=a$, and symmetrically, we can prove $y'=b$. 
  
  Suppose that $r=r'$.  If $r$ has some neighbours in $R$, then $rca$, $rPa$ and a shortest path from $r$ to $a$ in $\{r, a\} \cup R$ form a theta, a contradiction. So, $r$ has no neighbours in $R$.  
  Hence, $axRybPa$ is a hole, and therefore the rim of a wheel centred at $c$.  
  This is a contradiction to Condition~\ref{i:center}.  So, $r\neq r'$.  By the minimality of $R$, it follows that no internal vertex of $R$ has a neighbour in $P$.   
  
  Suppose that some vertex of $rPr'$  has some neighbour in $R$.  By the minimality of $R$, this implies that $R$ has length~0, so $x=y$.  Then, $xac$, $ybc$ and some shortest path from $x$ to $c$ in $\{x, c\} \cup rPr'$ form an $xc$-theta, a contradiction. Hence, no vertex of $rPr'$ has some neighbour in $R$. 
  Hence, $axRybPa$ is a hole,  and therefore the rim of a wheel centred at $c$.  This is a contradiction to Condition~\ref{i:center}. 
\end{proof}

Adding an \emph{ear} to a path $acb$ in a graph $G$ means adding a path $P= x\dots y$ disjoint from $G$ and such that $xa, yb \in E(G)$, $c$ has at least one neighbour in the interior of $P$, no two consecutive vertices of $axPyb$ are adjacent to $c$ and there are no other edges.  
A \emph{good ear addition} means an ear addition made to a good~$P_3$.

Note that our definition slightly differs from the one in~\cite{DBLP:journals/jgt/ConfortiCKV00}.  
First, in~\cite{DBLP:journals/jgt/ConfortiCKV00}, good ear additions are defined with the three conditions of Lemma~\ref{l:goodEquiv}.  
We believe that our definition is simpler (both to state and to use, see below how we use it) and slightly more general (with our definition, adding an ear to a $acb$ is permitted even when $c$ is a clique separator separating $a$ from $b$). 
Another difference is a condition in~\cite{DBLP:journals/jgt/ConfortiCKV00} that we do not need, about the parity of the number of neighbours of $c$ in the ear, to prevent creating an even wheel when adding an ear (recall that~\cite{DBLP:journals/jgt/ConfortiCKV00} is concerned with even-wheel-free graphs).
Note finally that we need Lemma~\ref{l:goodEquiv} only to explain that our notion of good ear addition is (almost) equivalent to the one in~\cite{DBLP:journals/jgt/ConfortiCKV00}, otherwise we never use it.

\begin{lemma}
  \label{l:earPreserve}
  Suppose that $G'$ is obtained from $G$ by a good ear addition. 
  Then $G'$ is theta-free (resp.\ triangle-free, wac-free, turtle-free, c-wac-free, prism-free) if and only if $G$ is theta-free (resp.\ triangle-free, wac-free, turtle-free, c-wac-free, prism-free).
\end{lemma}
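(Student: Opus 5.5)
The forward direction is immediate, since $G$ is an induced subgraph of $G'$. For the converse, I would show that when $G$ has at least two vertices outside $acb$ (otherwise treat degenerate cases directly), $acb$ is a proper $P_3$-separator of a suitable graph. Then I would invoke Lemma~\ref{l:decProperP3Sep}, which carries exactly the list of properties in the statement: theta, triangle, wac, turtle, c-wac, prism.

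Concretely, let $P = x\dots y$ be the ear added to the good $P_3$ $acb$. In $G'$, $acb$ separates the interior of the ear, namely the vertex set $Y = V(P)$, from $X = V(G)\sm\{a,b,c\}$. The component structure on the $G$ side need not be a single component, so I would work directly with the argument of Lemma~\ref{l:decProperP3Sep} rather than its hypotheses. Suppose $G'$ contains a structure $H$ (theta, wac, turtle, c-wac, prism; triangles are trivial since $P$ creates none) and $G$ does not. Then $H$ meets $Y$. Since $H$ has no clique separator, no proper 2-separator and no proper $P_3$-separator, and $N_{G'}(Y)\subseteq\{a,b,c\}$, one of two things holds. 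Either $H\cap(Y\cup\{a,b\})$ is an $ab$-path $R$ through $Y$, or $H$ lies within $Y\cup\{a,b,c\}$. The latter is impossible, because $G'[Y\cup\{a,b,c\}]$ is a daisy-like graph: it is a wheel centred at $c$ with rim $axPyb$ plus $c$, or a path-like graph, and it contains none of the forbidden structures. This mirrors the observation that each $P_3$-separator of a wac or theta cuts off a bare path.

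In the first case, I would replace $R$ by an $aGb$-path $Q$ in $G\sm c$ with no internal vertex adjacent to $c$. Such a $Q$ exists because $acb$ is good, provided some $ab$-path in $G\sm c$ exists; if none exists, $R$ cannot be part of a larger structure through $a$ and $b$ except via $c$, and that case must be handled separately. The substitution should yield the same type of structure in $G$. The key point is that $R$ and $Q$ interact with the rest of $H$ in the same way. The rest of $H$ lies in $X\cup\{a,b,c\}$, and $Q$ has no edges to it other than those forced, since the interior of $Q$ lies in $X$. Here the relevant adjacencies are to $c$: the interior of $R$ may see $c$, while the interior of $Q$ does not.

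The main obstacle is exactly this $c$-adjacency mismatch. If $c\in H$ and $c$ has neighbours in the interior of $R$, then replacing $R$ by $Q$ changes the structure. For example, a wheel centred at $c$ whose rim contains $R$ becomes a hole through $a$, $c$, $b$ or a plain cycle. I would handle this by case analysis on the role of $c$ in $H$. If $c$ is a centre of a wac, I would argue that $H$ then contains an $ab$-path $S$ of $G\sm c$ within $H\sm R$ having internal vertices adjacent to $c$; the other centre, or the rim remainder, provides it. This contradicts goodness of $acb$ directly. If $c$ is on the rim of $H$ or on a theta or prism path, then $c$, being adjacent to an internal vertex of $R$, is a non-path vertex adjacent to the path, so $H\cap(Y\cup\{a,b,c\})$ is not just a path; I would rule this out by the non-properness analysis together with the ear condition that no two consecutive vertices of $axPyb$ are adjacent to $c$. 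I expect this step, verifying that every configuration where $c$ sees the interior of $R$ yields an $ab$-path in $G\sm c$ with an internal neighbour of $c$, to be the delicate part.
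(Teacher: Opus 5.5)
\paragraph{Gap: the substitution of $Q$ for $R$.} In your first case you replace $R$ by an $ab$-path $Q$ of $G\setminus c$ whose interior avoids $N(c)$. You then assert that $Q$ has no edges to the rest of $H$ ``since the interior of $Q$ lies in $X$''. That is where the argument breaks. $H\setminus R$ also lies in $X\cup\{a,b,c\}$, so nothing stops $Q$ from sharing vertices with $H\setminus R$ or having chords to it. In fact the obvious candidates for $Q$ are $ab$-paths inside $H\setminus P$ itself. In Lemma~\ref{l:decProperP3Sep} the swap is harmless only because the substitute path lives in the \emph{other} component. Here the other side is the ear, which has no counterpart in $G$. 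So that mechanism is unavailable, and your proposal gives no way to turn $H$ into a structure of the same type in $G$.

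\paragraph{Missing idea: replace the ear by $c$.} Replace $P$ by the single vertex $c$, that is, replace $R=axPyb$ by $acb$. First, if $H$ meets $P$ then $P\subseteq H$. Otherwise a component of $H\cap P$ would be cut off by one of the cliques $\{c\}$, $\{a,c\}$, $\{b,c\}$.

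Suppose $c\notin H$. Then every vertex of $H\setminus(P\cup\{a,b\})$ is an internal vertex of some $ab$-path of $G\setminus c$ contained in $H\setminus P$:
\begin{itemize}
\item for a theta, use the two routes through the other two paths;
\item for a wac with rim $C$, use $C\setminus P$, and for a centre $u$ the detour $aCu_1uu_kCb$ through its extreme neighbours on $C\setminus P$;
\item prisms are handled the same way.
\end{itemize}
Goodness then says $c$ has no neighbour in $H$ other than $a$ and $b$. Hence $(H\setminus P)\cup\{c\}$ is a structure of the same type in $G$. The turtle versus c-wac distinction is preserved, since neither $P$ nor $c$ sees the centres.

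Suppose $c\in H$. Then $c$ has in $H$ the pairwise non-adjacent neighbours $a$, $b$, $p$, where $p$ is an interior vertex of $P$ and has all three of its neighbours in $H$.
\begin{itemize}
\item In a theta, $c$ and $p$ would be adjacent branch vertices, which is impossible.
\item If $c$ is on the rim of a wac, then $P\subseteq C$, because interior vertices of $P$ have degree at most~3 and centres have degree at least~4. So $c$ would have three rim neighbours, which is impossible.
\item In a prism, $c$ would be a degree-3 vertex lying in no triangle, which is impossible.
\item If $c$ is a centre of a wac, your argument via the other centre $c'$ is exactly right.
\end{itemize}

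So the case you flagged as delicate ($c$ seeing the interior of $R$) is the easy one. The case $c\notin H$, which you treated as routine, is where goodness does the real work.
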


\begin{proof}
  If $G'$ is theta-free (resp.\ triangle-free, wac-free, turtle-free, c-wac-free, prism-free), then so is  $G$ since $G$ is an induced subgraph of $G'$.  
  Let us prove the converse for all structure under consideration. 
  It is clear for triangles. 
  
  If $G'$ contains a theta $H$, then either $H$ is a theta of $G$, or $H$ contain the ear~$P$ (it must contain it entirely since a theta has no clique separator).  
  Note that $c$ cannot be in $H$, hence replacing $P$ by $c$ yields a theta of $G$ (because $acb$ is good, so there is no neighbour of $c$ in $H \sm \{a, b\}$).   
  In all cases, $G$ contains a theta.
  The proof for prisms is similar. 
  
  If $G'$ contains a wac $H$, we may assume that $c$ is a centre of $H$, for otherwise the same proof as above works. 
  Let $c'$ be the other centre. Some path from $a$ to $b$ through $c'$ contradicts $acb$ being a good $P_3$ of $G$, because $cc'\in E(G)$.  The same proofs works for turtles and c-wacs.  
\end{proof}

\begin{lemma}
  \label{l:findOneEar}
  Every atomic (theta, triangle, wac)-free graph on at least three vertices and distinct from a hole and a cube can be obtained from an atomic graph by a good ear addition.
\end{lemma}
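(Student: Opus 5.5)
Let $G$ be an atomic (theta, triangle, wac)-free graph on at least three vertices, distinct from a hole and a cube. We need to find a good $P_3$ $acb$ and an ear $P$ attached to it such that $G\sm P$ is atomic. The natural tool is Theorem~\ref{th:Decomp}, combined with induction on $|V(G)|$ along the decomposition by proper 2-separators and proper $P_3$-separators.

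\textbf{Basic case.} Since $G$ is atomic, has at least three vertices, is not a cube and is not a hole, Theorem~\ref{th:Decomp} says that either $G$ is a daisy with at least one petal, or $G$ has a proper 2-separator or a proper $P_3$-separator.

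First suppose $G$ is a daisy. Take a petal $P_i$ that is extremal in the path of centres, say centred at $c_i$ with no petal at $c_{i+1}$ (or any petal, if the daisy is full). Remove $P_i$ and regard it as an ear on $c_{i-1}c_ic_{i+1}$. The remaining graph is a daisy with one fewer petal, or the hole $C$, so it is atomic. The ear conditions hold directly by the definition of a petal. For goodness, note that $G\sm P_i$ is a daisy. In it, every $c_{i-1}c_{i+1}$-path avoiding $c_i$ must use $Q_i$, or else use petals centred at $c_{i\pm1}$. We must rule out such a path meeting a neighbour of $c_i$ internally. Since $c_i$ has no neighbours outside $C\cup P_i$, the only such neighbours are $c_{i\pm1}$ themselves. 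So goodness holds.

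\textbf{Separator case.} Here I would induct on the blocks. Suppose first $(X,\{a,b\},Y)$ is a proper 2-split. The block $G_X$ is atomic by Lemma~\ref{l:decProper2sepAtomic} and (theta, triangle, wac)-free by Lemma~\ref{l:decProper2sep}. It is not a hole, because $X\cup\{a,b\}$ is not a path.

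Is it a cube? A cube has no proper 2-separator and no degree-2 vertex. Yet the marker path $Q$ in $G_X$ has internal vertices of degree~2. Hence $G_X$ is neither a cube nor, by the same reasoning, a hole.

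By induction, $G_X$ arises from an atomic graph by a good ear $P$ on some $acb$-type $P_3$ $uwv$. The key is to choose the ear disjoint from the marker path $Q$. To arrange this, I would strengthen the induction hypothesis: \emph{for every induced path $Q$ whose internal vertices have degree~2, the ear can be chosen disjoint from $Q$ and from its ends}. Alternatively, I would use two markers, or pick the ear in the block whose side is deepest. Then $P\subseteq X$, and $P$ is also an ear of $G$. Moreover, $G\sm P$ is obtained by gluing $G_X\sm P$ and $G_Y$ along $\{a,b\}$, so it is atomic by Lemma~\ref{l:decProper2sepAtomic} applied in reverse.

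For goodness in $G$, take any $uv$-path in $G\sm w$. Either it stays in $X\cup\{a,b\}$, or it crosses $\{a,b\}$ through $Y$. In the latter case, replace the $Y$-portion by $Q$ to get a path of $G_X$. This preserves adjacency to $w$ whenever $w\notin\{a,b\}$. The case $w\in\{a,b\}$ needs separate care, which is exactly why the ear should avoid the separator. Proper $P_3$-splits are handled the same way, using Lemma~\ref{l:decProperP3SepAtomic}. There the marker in $G_Y$ is the loose path, so no neighbour of $c$ appears in its interior.

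\textbf{Main obstacle.} I expect the hardest part to be the strengthened induction: guaranteeing an ear that avoids a prescribed marker path, including its ends and the vertex $c$ of a $P_3$-separator. In a daisy, this amounts to choosing a petal far from the marker. Since markers lie in degree-2 stretches, i.e.\ inside external sectors or within $C$, this needs a daisy with at least two petals, or an argument that a single-petal wheel still offers a suitable alternative petal (any sector can be viewed as the petal). The cube never occurs as a block containing a marker, which keeps the base cases finite.
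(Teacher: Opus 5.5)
\textbf{There is a genuine gap, and it is where you place your ``main obstacle''.} Nothing in the proposal shows that the ear obtained by induction in a block $G_X$ can be taken inside $X$.

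You propose a strengthened hypothesis: the ear can be chosen disjoint from any prescribed induced path whose internal vertices have degree~2, and from its ends. You do not prove it, and as formulated it does not survive the induction.
\begin{itemize}
\item In a proper $P_3$-split $(X,acb,Y)$ the side $Y$ is tight. So the marker $Q$ of $G_X$ necessarily has an internal vertex adjacent to $c$, of degree~3 in $G_X$. Your hypothesis therefore says nothing about $G_X$.
\item Consequently, when the prescribed path lies on the $Y$-side, you cannot take the ear from $G_X$.
\item Taking it from $G_Y$ instead would require avoiding two paths at once (the marker $P$ and the prescribed path). The hypothesis does not give that either.
\item The obvious relaxation (internal vertices of degree~2, or of degree~3 adjacent to a fixed vertex) is false already for wheels. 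In a wheel $(H,c)$ the ends of any good ear have degree~2 in $G$, so every good ear meets $H\setminus h$ for $h\in N_H(c)$.
\end{itemize}
Your alternative suggestions (``two markers'', ``the deepest side'') are not developed.

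\textbf{How the paper closes the induction.} It prescribes nothing. It proves by induction the statement ($\star$): every such graph other than a hole or a wheel contains two \emph{disjoint} paths $S,T$, each a good ear whose removal leaves an atomic graph.

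Two disjoint ears of a block cannot both meet its marker:
\begin{itemize}
\item for a 2-separator, because the interior of the marker consists of degree-2 vertices;
\item for the tight block of a $P_3$-separator, because otherwise $S$ and $T$ are attached at $scr$ and $pcr$ with $r$ internal to $Q$, and then $ac$ is a clique separator of $G_X\setminus S$ or of $G_X\setminus T$, cutting $r$ off from $X$.
\end{itemize}
Hence one of the two ears lies in $X$. The mirror argument gives one in $Y$, and these are two disjoint ears of $G$, which closes the induction. Wheel blocks can occur for 2-separators but not for proper $P_3$-separators. They are handled directly by taking the ear $H\setminus R$, where $R$ is the sector containing the marker.

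\textbf{Two smaller points.}
\begin{itemize}
\item In your daisy case, $c_i$ does have neighbours outside $C\cup P_i$, namely $x_{i+1}$ and $y_{i-1}$ when petals at $c_{i\pm1}$ exist; your extremal choice still allows one at $c_{i-1}$. Goodness survives only because these vertices have degree~2 in $G$, so they cannot be internal vertices of a path avoiding $c_i$.
\item An ear lying in $X$ can still have its centre in the separator, so that case of the goodness transfer must still be argued. For a 2-separator it is easy: a path avoiding $a$ cannot pass through $Y$.
\end{itemize}
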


\begin{proof}
  Since every  every wheel is obtained from a hole by a good ear addition, it is enough to prove by induction the following statement. 
  
  \medskip
  
  \noindent ($\star$): Every (theta, triangle, wac)-free graph distinct from a hole and a wheel contains two disjoint paths $S$ and $T$ such that $G\sm S$ (resp.\ $G\sm T$) is atomic and $G$ is obtained from $G\sm S$ (resp.\ $G\sm T$) by adding the good ear $S$ (resp.~$T$).
  
  \medskip
  
  The statement $(\star)$ is clear for daisies, because a daisy distinct from a hole and a wheel must have at least two petals, and any two of them can serve as the two ears that we need. 
  So, by Theorem~\ref{th:Decomp} and since $G$ is atomic, it is enough to consider the following cases. 
  
  \medskip
  
  \noindent{\bf Case~1:} $G$ has a proper 2-separator with split $(X, \{a, b\}, Y)$.

  We claim that $X$ contains a path $S$ such that $G_X \sm S$ is atomic, $G_X$ is obtained from $G_X \sm S$ by adding the good ear $S$ to some good $P_3$ of $G_X$. Note that $G_X$ is not a hole (because $\{a, b\}$ is proper). 
  
  If $G_X$ is a wheel $W = (H, c)$, then some sector $R = x \dots y$ of $W$ contains the marker path $Q$. 
  We then set $S = H\sm R$, and $G_X$ is obtained from $xRycx$ by adding the ear $S$ to $xcy$. 
  If $G_X$ is not a wheel, then by the induction hypothesis, $G_X$ contains two disjoint ears $S$ and $T$ as in $(\star)$.  
  Since they cannot both overlap $Q$, one of them, say $S$ up to symmetry, is in $X$. This proves our claim.  For simplicity, we also denote by $xcy$ the good $P_3$ on which $S$ is added. 
  
  We now prove that $S$ is in fact such that  $G\sm S$ is atomic and $G$ is obtained from $G\sm S$ by adding the good ear $S$ to some good $P_3$ of $G$.  Since $G_X \sm S$ is atomic, so is $G\sm S$ (we omit the details, they are similar to the ones in Lemma~\ref{l:decProper2sepAtomic}). The path $xcy$ is good also in $G$ because a path from $x$ to $y$ in $G$ has all its internal vertices either in $G_X$ or in $Y$, and in either case, they are non-adjacent to $c$ since $xcy$ is good in $G_X$.  This proves our claim. 
  
  Now we observe that the argument can be mirrored in $Y$ to obtain the second ear $T$ that we need, thus proving $(\star)$ for $G$.  
  
  \medskip
  
  \noindent{\bf Case~2:} $G$ has a proper $P_3$-separator with split $(X, acb, Y)$.

  We claim that $X$ contains a path $S$ such that $G_X \sm S$ is atomic, $G_X$ is obtained from $G_X \sm S$ by adding the good ear $S$ to some good $P_3$ of $G_X$. Note that $G_X$ is neither a hole nor a wheel (because $acb$ is proper). 
  So, we may apply directly the induction hypothesis and obtain two disjoint ear $S$ and $T$. 

  The proof is then very similar to the previous case, but we need to be more careful about proving that $S$ and $T$ cannot both overlap the marker path $Q$. If they do, then $S$ is an ear added to a path $scr$ and $T$ to a path $pcr$ where $r$ is some internal vertex of $Q$.  But then there is a contradiction, because $ac$ is a clique separator of either $G_X \sm S$ or $G_X \sm T$ (separating $r$ from $X$), contradicting $G_X \sm S$ and $G_X \sm T$ being atomic.
  
  Mirroring the proof in $G_Y$ is easy, because the internal vertices of the marker path $P$ have degree~2, so not both ears obtained by induction can overlap $P$.  
\end{proof}

We are now ready to prove the main theorem of this section, that is very similar to Theorem~6.4 from~\cite{DBLP:journals/jgt/ConfortiCKV00}.

\begin{theo}
  \label{th:ear}
 Let $G$ be an atomic graph on at least three vertices and distinct from the cube. Then, $G$ is (theta, triangle, wac)-free if and only if $G$ can be obtained from a hole by a sequence of good ear additions.   
\end{theo}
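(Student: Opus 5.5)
The proof splits into the two directions, and the work is in combining Lemmas~\ref{l:earPreserve} and~\ref{l:findOneEar} with an induction on $|V(G)|$.

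For the ``if'' direction, I would argue by induction on the number of good ear additions. A hole is (theta, triangle, wac)-free: it contains no vertex of degree~3, no triangle, and no wheel. Lemma~\ref{l:earPreserve} then shows that each good ear addition preserves being theta-free, triangle-free and wac-free. I would also check atomicity, since the statement is about atomic graphs and the induction in the other direction needs it. The graph obtained from a hole is atomic by hypothesis, so nothing more is required there. It is still worth noting that adding an ear $P$ to $acb$ in an atomic graph creates no clique separator: $P$ is attached at $a$, $b$ and $c$, and $a$ and $b$ are joined through $P$.

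For the ``only if'' direction, I would proceed by induction on $|V(G)|$, with $G$ atomic, (theta, triangle, wac)-free, on at least three vertices, and not the cube. If $G$ is a hole, there is nothing to prove. Otherwise Lemma~\ref{l:findOneEar} yields an atomic graph $G'$ from which $G$ is obtained by one good ear addition. I then need to verify that $G'$ satisfies the induction hypotheses.
\begin{itemize}
\item $G'$ is (theta, triangle, wac)-free, since it is an induced subgraph of $G$.
\item $G'$ is atomic, by Lemma~\ref{l:findOneEar}.
\item $G'$ has at least three vertices, because it contains the path $acb$.
\item $G'$ is not the cube. The ear attaches to $a$, $b$ and an internal neighbour of $c$, so $G$ would contain the cube as an induced subgraph together with further vertices. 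By Lemma~\ref{l:decCube}, $G$ would then have a clique separator, contradicting atomicity.
\end{itemize}
The induction hypothesis then shows that $G'$ is obtained from a hole by good ear additions, and appending the last ear finishes the proof.

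The main obstacle is making sure the inductive hypotheses are really preserved, especially that $G'$ is not the cube; Lemma~\ref{l:decCube} handles this as above. A second point to confirm is that, at the base of the induction, the wheel case reduces to a hole. Lemma~\ref{l:findOneEar} covers this, because a wheel is a hole plus one good ear, so no separate treatment is needed.
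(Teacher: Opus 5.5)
Your proposal is correct and follows the paper's proof: the forward direction comes from repeated applications of Lemma~\ref{l:findOneEar}, and the converse from repeated applications of Lemma~\ref{l:earPreserve} starting from a hole. The paper leaves the inductive bookkeeping implicit, and you spell it out: the smaller graph is atomic, has at least three vertices, and is not the cube. The last point follows, as you argue, from Lemma~\ref{l:decCube} and the atomicity of $G$.
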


\begin{proof}
  If $G$ is (theta, triangle, wac)-free, then it is obtained from some hole by a sequence of good ear additions by repeated applications of  Lemma~\ref{l:findOneEar}. The converse if obtained by repeated applications of Lemma~\ref{l:earPreserve}.
\end{proof}

It is easy to obtain variants of Theorem~\ref{th:ear} for the classes of (theta, triangle, even wheel)-free graphs, (even hole, triangle)-free graphs and bipartite (theta, wac)-free graphs.  Just ensure that adding the ear is class-preserving.  We omit the description, it is very similar to the descriptions of restrictions to the definition of petals in Lemma~\ref{l:daisyEHF}.

Note that Theorem~\ref{th:ear} and it variants are maybe not so convenient, in particular to design recognition algorithms, because testing if a $P_3$ is good is difficult, as certified by the following theorem. 

\begin{theo}
  \label{th:goodNPC}
  Testing if a prescribed path $acb$ of an input graph $G$ is a good $P_3$ is a coNP-complete problem.  
\end{theo}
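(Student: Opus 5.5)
Membership in coNP is immediate: a certificate that $acb$ is \emph{not} good is an $ab$-path $P$ in $G\sm c$ (induced, by the paper's convention) with some internal vertex adjacent to $c$. Checking that $P$ is chordless, avoids $c$, and meets $N(c)$ in its interior takes polynomial time. For hardness, we show that the complement problem is NP-hard. That problem asks: does there exist an induced $ab$-path in $G\sm c$ whose interior contains a neighbour of $c$? Since $c$ has no other role, this is exactly the problem of deciding whether some induced $ab$-path passes through a vertex of a prescribed set $N$. We then pick a single vertex $v$, attach a pendant-free gadget so that $c$ is adjacent to $v$ only, and reduce to the classical NP-complete problem: given a graph and three vertices $a, v, b$, is there an induced path from $a$ to $b$ through $v$? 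This is the ``three-in-a-path'' problem, which is NP-complete by Bienstock's result (also used for detecting holes through a prescribed vertex and for theta/pyramid detection reductions).

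\textbf{Construction.} Given an instance $(H, a, v, b)$ of three-in-a-path, we form $G$ by adding a new vertex $c$ adjacent to $a$, $b$ and $v$, and to nothing else. We need $a, b \ne v$, and we may assume $a$, $b$, $v$ are pairwise non-adjacent by a standard preprocessing (subdividing near them, or handling trivial cases). Then $acb$ is a path of $G$, since $ab\notin E$. An $ab$-path in $G\sm c$ is an induced $ab$-path of $H$. Its interior contains a neighbour of $c$ if and only if it goes through $v$, because $a$ and $b$ are its ends and $v$ is the only other neighbour of $c$. Hence $acb$ is not good if and only if $(H,a,v,b)$ is a yes-instance.

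\textbf{Main obstacle.} The main care point is the exact hardness source and the adjacency assumptions. Bienstock's reduction gives NP-completeness of deciding whether a graph has an induced path through three prescribed vertices, or equivalently an induced cycle through two prescribed vertices. We must make sure the version used has the middle vertex $v$ constrained to be internal and $a$, $b$ as the ends. If only the ``some order'' version is available, we first fix the order by attaching new pendant paths: a new leaf neighbour $a'$ of $a$ and $b'$ of $b$ become the ends. I would also double-check that non-adjacency of $a$ and $b$ can be assumed; if $ab\in E$ the instance is trivial or can be subdivided. I expect no further difficulty.
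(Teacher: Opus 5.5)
Your proposal is correct and is essentially the paper's proof. The paper starts from Bienstock's problem of finding a hole through two prescribed degree-$2$ vertices $c, c'$ and adds the edge $cc'$; this is exactly your construction applied to the instance $(G\setminus c, a, c', b)$, where $a,b$ are the neighbours of $c$. It also shows directly that your fixed-ends source problem is NP-complete, so the pendant-path fallback is unnecessary (and it would not actually fix which vertex lies in the middle).
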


\begin{proof}
  First note that the problem is in coNP because an $ab$-path in $G\sm c$ containing an internal vertex adjacent to $c$ certifies that $acb$ is not good. 
  
  Bienstock~\cite{bienstock:evenpair} proved that deciding if an input graph $G$ with two prescribed vertices $c$ and $c'$ of degree~2 contains a hole through $c$ and $c'$ is an NP-complete problem.  
  Let us reduce this problem to testing for a non-good~$P_3$, so consider an instance $G, c, c'$ of Bienstock's problem.  Call $a$ and $b$ the neighbours of $c$, and build a graph $G'$ from $G$ by simply adding the edge $cc'$. 
  
  If $G$ contains a hole $H$ through $c$ and $c'$, then $acb$ is not a good $P_3$ of $G'$, because of the path $H\sm c$. Conversely, if $acb$ is not good, then there exists an $ab$-path $P$ in $G'\sm c$ with an internal vertex $v$ adjacent to $c$. Since $c$ has degree~2 in $G$, we must have $v=c'$.  Hence, $P$ and $c$ form a hole of~$G$ through $c$ and $c'$.     
\end{proof}

Of course, when restricted to (theta, triangle, wac)-free graphs, the problem of testing for good $P_3$'s is much easier (and tractable in polynomial time since as we will see, the treewidth in the class is bounded).

\subsection{Planarity}
\label{ssec:planar}

We now show that our description allows to understand the planar graphs in our class. By the Jordan curve theorem, any hole $C$ in a planar embedding of some graph is a simple closed curve that partitions its complement into a bounded open set and an unbounded open set.  They are respectively the \emph{interior} and the \emph{exterior} of $C$.  
  
\begin{theo}
  \label{th:planar}
  A (theta, triangle, wac)-free graph is planar if and only if it contains no full odd daisy. 
\end{theo}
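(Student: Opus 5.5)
Two directions. For the "only if" direction, I will show that every full odd daisy is non-planar. For the "if" direction, I will show that a (theta, triangle, wac)-free graph with no full odd daisy is planar. I will run induction on $|V(G)|$ using Theorem~\ref{th:Decomp}, together with the facts that planarity is preserved by the separators and that full odd daisies survive decomposition.

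\textbf{Full odd daisies are non-planar.} Let $D$ be a full $k$-daisy with $k$ odd, hole $C$ and petals $P_1,\dots,P_k$. Fix a planar embedding and look at the Jordan curve $C$. Each petal $P_i$ is connected and disjoint from $C$, so it lies entirely in the interior or entirely in the exterior of $C$. The petals $P_i$ and $P_{i+1}$ cannot lie on the same side. Indeed, $P_i$ attaches at $c_{i-1}$, at $c_i$ (via interior vertices), and at $c_{i+1}$. Likewise $P_{i+1}$ attaches at $c_i$, $c_{i+1}$, $c_{i+2}$. So their attachment sets interleave along $C$: $c_{i-1},c_i,c_{i+1}$ versus $c_i,c_{i+1},c_{i+2}$, and once the degenerate case $k=3$ is handled, these give two crossing chords. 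On a common side they would have to cross, contradicting planarity. I would make this precise by contracting each petal to a path through its attachments, or by exhibiting two vertex-disjoint crossing paths. Thus the sides of consecutive petals alternate around the odd cycle of centres, which is impossible since $k$ is odd. Every graph containing a full odd daisy is therefore non-planar.

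\textbf{Graphs with no full odd daisy are planar.} I may assume $G$ is connected, and by the usual clique-sum argument (clique separators have at most two vertices) I may assume $G$ is atomic. If $G$ is basic, it is planar: $K_1$, $K_2$ and the cube are planar. A daisy that is not a full odd daisy is planar, by the reverse of the alternation argument. If it is not full, its centres induce a path $c_1\dots c_\ell$ with $\ell<k$, and I put petals alternately inside and outside $C$. Consecutive petals on opposite sides do not interfere, and petals two apart share only one attachment vertex, so they can be drawn on the same side. If it is a full even daisy, the alternation closes up consistently.

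Otherwise, $G$ has a proper 2-separator or a proper $P_3$-separator with blocks $G_X$ and $G_Y$. These blocks are smaller and are (theta, triangle, wac)-free by Lemmas~\ref{l:decProper2sep} and~\ref{l:decProperP3Sep}. They are induced subgraphs of $G$, so they contain no full odd daisy, and by induction both are planar. For a 2-separator $\{a,b\}$, embed $G_X$ so that the marker path $Q$ lies on the outer face, which is possible because $Q$ is an induced path whose interior vertices have degree~2. Then glue an embedding of $G_Y$ with $P$ on its outer face, replacing each marker path by the other side.

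\textbf{The main obstacle: the $P_3$ case.} The $P_3$-separator case is where I expect the real difficulty. Here $c$ has neighbours on both sides, so the gluing must embed $G_X$ with the face bounded by $Q$ and $acb$ available. The edges from $c$ into $Y$ must then fit on the correct side of $c$'s rotation. I would try first to choose the marker paths $P$ and $Q$ (tight side: $Q$ through a neighbour of $c$) so that in each block the cycle marker$\,\cup\, acb$ bounds a face containing $c$'s edges to the marker. Then I would take the planar 2-sum-like gluing along $a,c,b$ with the rotation at $c$ concatenated. If some block has no such embedding, I would argue that the obstruction extends, by replacing the marker with the real component, to a full odd daisy in $G$, giving a contradiction.
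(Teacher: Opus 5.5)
Your proof that full odd daisies are non-planar is correct and more explicit than the paper's ``obviously''. Consecutive petals are overlapping bridges of $C$, so they must lie on opposite sides, which is impossible around an odd cycle; also $k\ge 5$ automatically since $C$ is a hole, so the $k=3$ caveat never arises. Your treatment of basic graphs, clique separators and proper 2-separators matches the paper.

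The gap is the proper $P_3$-separator case. It is the only non-routine step, and you give only a plan. That plan also points the wrong way. Choosing the marker paths cleverly is unnecessary, and the fallback ``an obstruction extends to a full odd daisy'' is not how the case is settled: the no-full-odd-daisy hypothesis is used here only to make $G_X$ and $G_Y$ planar by induction. What you actually need is that in \emph{every} planar embedding of each block, $acb$ lies on a face boundary once the interior of the marker is deleted. The obstruction to this is a theta or a clique separator, both excluded since $G$ is theta-free and (as you assumed) atomic.

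Concretely, the paper's argument runs as follows.
\begin{itemize}
\item \emph{Side $G_Y$.} The marker $P$ has interior vertices of degree~2 and no neighbour of $c$. So every component of $G_Y\setminus(aPbca)$ attaches only to $a,c,b$, and hence, by atomicity, to both $a$ and $b$. If such components lay on both sides of the hole $aPbca$, then shortest $ab$-paths through them (one inside, one outside, so no edges between them) together with $P$ would form an $ab$-theta. Thus $aPbca$ bounds a face.
\item \emph{Side $G_X$.} The path $P\cup Q$ is the rim of a wheel centred at $c$; say $c$ lies inside it. Take any sector $S=v\dots w$ of this wheel contained in $Q$, and the side of the hole $cvSwc$ not containing the rest of the rim. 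That side must be empty. Internal vertices of $Q$ have degree~2, or degree~3 with third neighbour $c$. Also $v,w$ are not both in $\{a,b\}$, since $Q$ has an interior neighbour of $c$. So anything there would be cut off by $\{c,v\}$ or $\{c,w\}$, a clique separator of $G$. These faces merge, when the interior of $Q$ is deleted, into one face with $acb$ on its boundary.
\end{itemize}
With $acb$ on a face boundary on both sides, the embeddings of $G[X\cup acb]$ and $G[acb\cup Y]$ glue along $acb$. This is exactly what resolves your concern about the rotation at $c$.
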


\begin{proof}
  A full odd daisy is obviously non planar, so a planar graph cannot contain it. The proof of the converse is by induction on the number of vertices of a (theta, triangle, wac)-free graph $G$ that contains no full odd daisy. By Theorem~\ref{th:Decomp}, it is sufficient to consider the following cases. 

  \medskip

  \noindent{\bf Case 1:} $G$ is basic (this is the base case of the induction).
  The graphs $K_1$, $K_2$ and the cube are planar. 
  When $G$ is a daisy, we consider the following planar embedding of $G$: draw the base hole $C$ as a circle. 
  Then draw every petal alternatively in the interior and in the exterior of $C$. 
  Since the daisy is not full and odd, this yields a planar embedding. 

  \medskip
  
  From here on, we consider blocks of decomposition of $G$, that are proper induced subgraphs of $G$, and are therefore planar by the induction hypothesis. 
  
  \medskip

  \noindent{\bf Case 2:} $G$ has a clique separator.
  Since $G$ is triangle-free, the clique separator is on at most~2 vertices and a planar embedding of $G$ is easily recovered from planar embeddings of its blocks of decomposition. 
  
  \medskip
  
  From here on, we consider blocks of decomposition $G_X$ and $G_Y$ with respect to a proper 2-separator $\{a, b\}$ or a proper $P_3$-separator $acb$. 
  Their respective marker are paths $Q$ and $P$, where $P\subseteq X \cup \{a, b\}$ and $Q\subseteq \{a, b\} \cup Y$.  
  We consider planar embeddings $\mathcal G_X$ of $G_X$ and $\mathcal G_Y$ of $G_Y$.
  
  \medskip
  
  \noindent{\bf Case 3:} $G$ has a proper 2-separator with split $(X, \{a, b\}, Y)$. 
  
  Since the internal vertices of $Q$ all have degree~2 (in $G_X$), $Q$ is on the boundary of some face of $\mathcal G_X$.  
  Hence $a$ and $b$ are on the boundary of some face of the embedding $\mathcal G'_X$  obtained from $\mathcal G_X$ by removing the internal vertices of~$Q$.  
  We have a similar property with $G_Y$ and $P$, that yields an embedding~$\mathcal G'_Y$. 
  So a planar embedding of $G$ is obtained by gluing the embeddings $\mathcal G'_X$ and $\mathcal G'_Y$ along $a$ and $b$. 
  
  \medskip
  
  \noindent{\bf Case 4:} $G$ has a proper $P_3$-separator with split $(X, acb, Y)$. We may assume that $G$ is atomic. So $G_X$ and $G_Y$ are also atomic by Lemma~\ref{l:decProperP3SepAtomic}.   
  
  The hole $H_Y = aPbca$ is a face of $\mathcal G_Y$. 
  Indeed, otherwise there are vertices in both the interior and the exterior of $H$. 
  Since $G$ is atomic and every vertex of $H_Y\sm acb$ has degree~2 (in $G_Y$), the interior of $H$ contains a shortest path $R$ from $a$ to $b$ (not going through $c$, but possibly containing neighbours of $c$).  
  Similarly, the exterior of $H$ contains a shortest path $S$ from $a$ to $b$. 
  Hence $R$, $S$ and $H_Y\sm c$ form a theta from $a$ to $b$, a contradiction.
  It follows that $acb$ is on some face of $\mathcal G_Y$. 
  
  We claim that $acb$ is on the boundary of some face of the embedding of $G[X \cup acb]$ obtained by removing the internal vertices of $Q$ in $\mathcal G_X$.  
  Indeed, the paths $P$ and $Q$ induce a hole $H$, that is the rim of a wheel $W$ of $G_X$ centred at~$c$.  
  Up to symmetry suppose that $c$ is in the interior of $H$.  
  For every sector $S = v\dots w$ of $W$ that is contained in $Q$, consider the hole $H_S = cvSwc$. 
  The exterior of $H_S$ is non-empty because of $H$.  
  Only $c$ and at most one of $v$ or $w$ (in case they coincide with $a$ or $b$) may have neighbours in the interior of $H_S$.  
  So, since $G$ is atomic, the interior of $H_S$ is empty, so $H_S$ is in fact the boundary of some face $F_S$ of $\mathcal G_X$. 
  It follows that when the internal vertices of $Q$ are removed, some face $F$ of the embedding contains the union of all faces $F_S$, and $acb$ is on the boundary of $F$, as claimed. 
  
  Since $acb$ is on the boundary of some face of some embeddings of $G[X \cup acb]$ and $G[acb \cup Y]$, a planar embedding of $G$ can be obtained from two such embeddings by gluing them along $acb$.  
\end{proof}

 We have the following corollary. 
 
 \begin{cor}
   Every bipartite (theta, wac)-free graph is planar. 
 \end{cor}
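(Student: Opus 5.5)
The plan is to derive the corollary directly from Theorem~\ref{th:planar}. Let $G$ be a bipartite (theta, wac)-free graph. Being bipartite, $G$ is triangle-free, so it is (theta, triangle, wac)-free and Theorem~\ref{th:planar} applies. It therefore suffices to show that $G$ contains no full odd daisy.

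Suppose for a contradiction that $G$ contains a full odd daisy $D$ as an induced subgraph. By definition of an odd daisy, its base hole $C$ has odd length, and $C$ is an induced subgraph of $D$, hence of $G$. An odd hole is an odd cycle, and a bipartite graph contains no odd cycle. This is a contradiction, so $G$ contains no full odd daisy and is planar by Theorem~\ref{th:planar}.

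The only step to check carefully is that ``contains'' in Theorem~\ref{th:planar} means containing as an induced subgraph, so that the odd base hole $C$ really is an odd cycle of $G$. Even if containment were weaker, $C$ would still be a cycle of odd length in $G$, which bipartiteness forbids. The fullness of the daisy and the petals play no role in the argument.

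As a cross-check, Lemma~\ref{l:daisyEHF} states that bipartite daisies are exactly those based on an even hole, so no bipartite graph contains an odd daisy at all. I do not expect any genuine obstacle: all the work is in Theorem~\ref{th:planar}, and the corollary is a one-line parity observation.
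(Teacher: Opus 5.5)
Your proof is correct and follows the paper's argument: since $G$ is bipartite it is triangle-free, so Theorem~\ref{th:planar} applies, and $G$ cannot contain a full odd daisy because the daisy's base hole would be an odd cycle. The paper states this in one line.
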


\begin{proof}
  Follows from Theorem~\ref{th:planar} and the fact that a bipartite graph contains no odd daisy. 
\end{proof}

\subsection{Treewidth and cliquewidth}
\label{ssec:tw}

A \emph{tree representation} of a graph $G$ is a pair $(T, \mathcal{X})$ where $T$ is a tree and $\mathcal{X} = \{ X_i \subseteq V(G) : i \in V(T) \}$ is a family of subsets of $V(G)$ (called \emph{bags}) such that:
\begin{enumerate}
    \item\label{i:TWv} For every vertex $v \in V(G)$, there exists at least one bag $X_i$ such that $v \in X_i$.    
    \item\label{i:TWe} For every edge $uv \in E(G)$, there exists at least one bag $X_i$ such that $u, v \in  X_i$.
    \item\label{i:TWc} For every vertex $v \in V(G)$, the set of nodes $\{ i \in V(T) : v \in X_i \}$ induces a connected graph  (so a subtree of $T$).
\end{enumerate}

The \emph{width} of a tree representation $(T, \mathcal{X})$ is defined as $\max_{i \in V(T)} |X_i| - 1$.
The \emph{treewidth} of a graph  $G$ is the minimum width over all possible tree representations of  $G$.

\begin{lemma}
  \label{l:TWconnect}
  If $(T, \mathcal X)$ is a tree representation of some graph $G$ and $H$ is a connected subgraph of $G$, then $\{ i \in V(T) :  X_i \cap H \neq \emptyset \}$ induces a connected subgraph of $T$ (so a subtree of $T$).
\end{lemma}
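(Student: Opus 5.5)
The plan is to argue by induction along paths of $H$, using only axiom~\ref{i:TWc} (the bags containing a fixed vertex form a subtree) and axiom~\ref{i:TWe} (each edge lies in some bag). Set $T_H = \{ i \in V(T) : X_i \cap H \neq \emptyset\}$; note that $T_H = \bigcup_{v \in V(H)} T_v$, where $T_v = \{ i \in V(T) : v \in X_i\}$. Each $T_v$ is nonempty by axiom~\ref{i:TWv} and induces a subtree of $T$ by axiom~\ref{i:TWc}.

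The key step concerns adjacent vertices. If $uv \in E(H)$, then by axiom~\ref{i:TWe} some bag $X_i$ contains both $u$ and $v$, so $T_u \cap T_v \neq \emptyset$. Since both are subtrees sharing a node, $T_u \cup T_v$ induces a connected subgraph of $T$.

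For the general case, take nodes $i, j \in T_H$. Choose $u, w \in V(H)$ with $i \in T_u$ and $j \in T_w$. Since $H$ is connected, it contains a path $u = v_1 v_2 \dots v_k = w$. By the previous step, consecutive subtrees $T_{v_\ell}$ and $T_{v_{\ell+1}}$ intersect. So the union $T_{v_1} \cup \dots \cup T_{v_k}$ is connected, which follows by an immediate induction on $\ell$, adding one subtree at a time, each of which meets the connected union built so far. This union lies in $T_H$ and contains both $i$ and $j$, so they are joined by a walk in $T[T_H]$. Hence $T[T_H]$ is connected, and it is a subtree because induced connected subgraphs of a tree are trees.

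No step is a genuine obstacle. The only point needing care is that the union of two subtrees of a tree with a common node induces a connected subgraph, and this is immediate: every node of either subtree connects within its own subtree to the shared node.
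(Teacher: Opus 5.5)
Your argument is correct. The paper's proof consists only of the word ``Clear'', and the standard reasoning behind it is exactly yours: write the set as the union of the subtrees $T_v$, use the edge axiom to see that adjacent vertices give intersecting subtrees, and chain these subtrees along a path of $H$.
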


\begin{proof}
  Clear. 
\end{proof}

The following lemma is often referred to as the Helly property for subtrees of a tree. 

\begin{lemma}[see~\cite{Gavril1974}]
  \label{l:helly}
  If $T_1$, \dots, $T_k$ are subtrees of some tree $T$ such that for all $i, j \in [k]$ we have $V(T_i) \cap V(T_j)\neq \emptyset$, then $V(T_1) \cap \dots \cap V(T_k) \neq \emptyset$. 
\end{lemma}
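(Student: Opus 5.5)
The plan is to prove the Helly property by induction on $k$, with the case $k=3$ as the core. For $k=1,2$ there is nothing to prove. For the inductive step with $k\geq 3$, I will replace $T_{k-1}$ and $T_k$ by $T' = T_{k-1}\cap T_k$. This is a subtree, since the intersection of two subtrees of a tree is connected: the unique path in $T$ between two vertices of the intersection lies in both subtrees. It is non-empty by hypothesis. Then I check that $T'$ meets every $T_i$ with $i\leq k-2$, which is exactly the case $k=3$ applied to $T_i, T_{k-1}, T_k$. The family $T_1,\dots,T_{k-2},T'$ has $k-1$ members and pairwise non-empty intersections, so induction concludes.

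For $k=3$, I pick $u\in T_1\cap T_2$, $v\in T_2\cap T_3$ and $w\in T_1\cap T_3$. In the tree $T$, the three paths $uTv$, $vTw$ and $wTu$ have a common vertex $m$, the median of $u,v,w$ (possibly equal to one of them). This is a standard fact about trees: consider the path $uTv$ and let $m$ be the vertex of it closest to $w$. Then $wTu$ and $wTv$ both pass through $m$.

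Since $u,w\in T_1$ and $T_1$ is a subtree, the path $uTw$ is contained in $T_1$, so $m\in T_1$. Similarly $uTv\subseteq T_2$ and $vTw\subseteq T_3$, so $m$ lies in all three subtrees.

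The only step needing care is the existence of the median: the claim that the paths meet in a single common vertex. I would justify it from the uniqueness of paths in a tree. The walk $wTm$ followed by $mTu$ is a path, because $wTm$ meets $uTv$ only at $m$ by the choice of $m$. Hence it is the path $wTu$, and symmetrically for $wTv$. Everything else is routine.
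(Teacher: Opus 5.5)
Your proof is correct and complete. The intersection $T_{k-1}\cap T_k$ is a non-empty subtree. The case $k=3$ supplies exactly the pairwise intersections the induction hypothesis needs. Your median argument is properly justified: a vertex $m$ of $uTv$ closest to $w$ satisfies $wTm\cap uTv=\{m\}$, so $wTm$ followed by $mTu$ is the unique path $wTu$.

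The paper gives no proof of its own here. It just invokes the Helly property for subtrees with a reference to Gavril, so your argument supplies what the paper leaves to the literature.

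For comparison, there is a shorter argument that avoids induction. Root $T$, and let $r_i$ be the vertex of $T_i$ closest to the root; it is unique and is an ancestor of every vertex of $T_i$. Pick $i$ with $r_i$ as deep as possible. For each $j$ and any $x\in T_i\cap T_j$, both $r_i$ and $r_j$ are ancestors of $x$, with $r_j$ no deeper than $r_i$. So $r_i$ lies on $xTr_j\subseteq T_j$, and hence $r_i$ is in every $T_j$.

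Your route is a little longer, but it uses only the fact that each $T_i$ contains the geodesic between any two of its vertices. Because of that, it carries over verbatim to convex sets in median graphs.
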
 

When $G$ and $H$ are graphs, a \emph{model for an $H$-minor in a graph $G$} is a family $(X_i)_{i\in V(H)}$ of disjoint connected nonempty subsets of $V(G)$ such that for all distinct $i, j\in V(H)$, if $ij\in E(H)$, then there exists  $u\in X_i$ and $v\in X_j$ such that $uv\in E(G)$.  We also say that \emph{$G$ contains $H$ as a minor}. The following is well known (and a direct consequence of Lemma~\ref{l:helly}). 

\begin{lemma}
  \label{l:minorTW}
    If $G$ and $H$ are graphs such that $G$ contains $H$ as a minor, then $\tw(G) \geq \tw(H)$. 
\end{lemma}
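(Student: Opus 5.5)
The plan is the standard contraction argument combined with the Helly property, Lemma~\ref{l:helly}. Let $(X_i)_{i\in V(H)}$ be a model of an $H$-minor in $G$, and let $(T,\mathcal X)$ be a tree representation of $G$ of width $\tw(G)$. From it we build a tree representation of $H$ on the same tree $T$ whose width is at most that of $(T,\mathcal X)$. Since $\tw(H)$ is the minimum over all tree representations of $H$, this gives $\tw(H)\le\tw(G)$.

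For each node $t$ of $T$, define the bag
\[
Y_t=\{\,i\in V(H) : X_i\cap X_t\neq\emptyset\,\},
\]
where $X_t\in\mathcal X$ is the bag of $G$ at $t$. We check the three axioms in turn.

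\begin{enumerate}
\item Every $i\in V(H)$ lies in some bag. Indeed, $X_i$ is nonempty, so it contains a vertex $v$, and $v$ lies in some bag $X_t$, which gives $i\in Y_t$.
\item Every edge $ij$ of $H$ lies in some bag. By the definition of a model, there is an edge $uv$ of $G$ with $u\in X_i$ and $v\in X_j$. Some bag $X_t$ contains both $u$ and $v$, so $i,j\in Y_t$.
\item For each $i$, the nodes $t$ with $i\in Y_t$ are exactly those whose bag meets $X_i$. Since $G[X_i]$ is connected, Lemma~\ref{l:TWconnect} shows these nodes form a subtree.
\end{enumerate}

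It remains to bound the width. The sets $X_i$ are pairwise disjoint. Hence distinct elements of $Y_t$ correspond to distinct, pairwise disjoint sets $X_i$, each of which meets $X_t$. Choosing one vertex of $X_t\cap X_i$ for each $i\in Y_t$ therefore gives an injection from $Y_t$ into $X_t$, so $|Y_t|\le|X_t|$. Consequently $(T,(Y_t))$ is a tree representation of $H$ of width at most $\tw(G)$.

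Helly (Lemma~\ref{l:helly}) is not actually needed for this direct construction. It would only come in if one first reduced to the case where $H$ is complete, using the pairwise-intersecting subtrees of the $X_i$. No step here is a real obstacle. The only point needing care is the disjointness in the counting step, which is what guarantees that the bags do not grow.
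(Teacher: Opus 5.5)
Your proof is correct. It is the standard direct construction: the bag at a node $t$ is the set of branch sets meeting $X_t$. Lemma~\ref{l:TWconnect} gives the subtree axiom, and disjointness of the branch sets gives $|Y_t|\le|X_t|$.

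The paper gives no written proof. It only remarks that the lemma is well known and follows from the Helly property (Lemma~\ref{l:helly}). The route that remark suggests uses the subtrees $T_i=\{t : X_t\cap X_i\neq\emptyset\}$. Adjacent branch sets give intersecting subtrees, and Helly turns any family of pairwise intersecting $T_i$ into a common node. The bag at that node contains a distinct vertex of each of these branch sets.

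For complete $H$ this immediately gives $\tw(G)\ge|V(H)|-1=\tw(H)$, which covers the $K_4$ and $K_5$ minors used in Lemma~\ref{l:TWdaisy}. For general $H$ it only gives $\tw(G)\ge\omega(H)-1$. Getting the full statement this way also requires the characterisation of treewidth through chordal supergraphs: the intersection graph of the $T_i$ is a chordal supergraph of $H$, and Helly bounds its clique number by the maximum bag size.

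Your construction makes the common node explicit, bag by bag. So it needs neither Helly nor chordal graphs, and it handles arbitrary $H$ directly. This includes the octahedron minor in Lemma~\ref{l:TWdaisy}, where a bare Helly argument only yields treewidth at least~2. Your closing remark, that Helly is relevant only after reducing to complete $H$, is accurate.
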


\begin{lemma}
  \label{l:decTW}
  Let $G$ be a graph with a clique separator, a proper 2-separator or a proper $P_3$-separator and let $G_1$, \dots, $G_k$ be its blocks of decomposition with respect to this separator.  We have $\tw(G) = \max (\tw(G_1), \dots, \tw(G_k))$.    
\end{lemma}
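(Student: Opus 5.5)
The inequality $\tw(G) \geq \max_i \tw(G_i)$ comes from minors; the reverse inequality comes from gluing tree representations.

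\textbf{Lower bound.} For a clique separator, each block $G_i$ is an induced subgraph of $G$, so $\tw(G_i)\le\tw(G)$.

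For a proper 2-separator with split $(X,\{a,b\},Y)$, the block $G_X$ is a minor of $G$. Indeed, $G_X$ is $G[X\cup\{a,b\}]$ together with the marker path $Q$. Since $\{a,b\}$ is proper, $G[Y\cup\{a,b\}]$ is not just an $ab$-path. Moreover $Y$ is connected and contains an $aYb$-path $Q$, and we may choose $Q$ to be the one used in $G_X$. So we delete the vertices of $Y$ not on $Q$ and obtain $G_X$ as an induced subgraph of $G$. Hence $\tw(G_X)\le\tw(G)$, and symmetrically for $G_Y$.

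For a proper $P_3$-separator with split $(X,acb,Y)$, the block $G_X=G[X\cup acb\cup Q]$ is again an induced subgraph of $G$, because $Q$ is an actual $aYb$-path of $G$. The same holds for $G_Y$ with the actual path $P$. So the lower bound is immediate in all three cases.

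\textbf{Upper bound.} Let $k=\max_i\tw(G_i)$. For a clique separator $K$, take optimal tree representations of the blocks. By Lemma~\ref{l:helly}, each contains a bag containing $K$, since the subtrees of the vertices of $K$ pairwise intersect. Join these bags by new tree edges. This is standard, and the result has width $k$.

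For a proper 2-separator, take a representation $(T_X,\mathcal X)$ of $G_X$ of width at most $k$. Contract the interior of $Q$ onto $a$, say; this gives a minor of $G_X$ equal to $G[X\cup\{a,b\}]$ plus the edge $ab$ when $Q$ has length at least 2. Replacing each interior vertex of $Q$ by $a$ in every bag yields a tree representation of width at most $k$ of $G[X\cup\{a,b\}]+ab$. This works because the union of connected subtrees along a path is connected, by Lemma~\ref{l:TWconnect}. In it some bag contains both $a$ and $b$. Do the same on the $Y$ side and join the two bags containing $\{a,b\}$ by an edge. The $\{a,b\}$-subtrees stay connected, and all edges of $G$ are covered.

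For a proper $P_3$-separator, I aim for the same gluing: a representation of $G[X\cup acb]$ with a bag containing all of $a,c,b$, and likewise on the $Y$ side. Contract the marker path onto $\{a,b\}$, so that $ab$ becomes an edge and $\{a,b,c\}$ becomes a triangle in the minor. By Helly some bag then contains $\{a,b,c\}$. The marker is $Q$ for $G_X$ and $P$ for $G_Y$, and it has length at least 2 since $ab\notin E(G)$. The contraction is a minor of $G_X$, so the width does not increase, and gluing along the bags containing $\{a,b,c\}$ gives a representation of $G$ of width at most $k$.

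\textbf{Main obstacle.} The delicate point is making the contraction argument precise: replace every interior vertex of the marker by its end $a$ in each bag. Edges of $G[X\cup acb]$ are preserved, connectivity of the $a$-subtree holds by Lemma~\ref{l:TWconnect}, and the edge between the last interior vertex and $b$ produces a bag containing $a$ and $b$. The $P_3$ case needs Helly to place $c$ in that bag. I expect this to go through cleanly, since $c$ is adjacent to both $a$ and $b$ in $G$.
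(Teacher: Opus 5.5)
Your proposal is correct and follows essentially the same route as the paper. The lower bound holds because the blocks are induced subgraphs of $G$. For the upper bound, you replace the interior of the marker path by $a$ in every bag, which stays a valid representation by Lemma~\ref{l:TWconnect}, and this yields a bag containing $a$ and $b$; in the $P_3$ case, Lemma~\ref{l:helly} then gives a bag containing $a,c,b$, and the two representations are glued along these bags. Your phrasing of the replacement as a contraction producing $G[X\cup\{a,b\}]$ plus the edge $ab$ is only a cosmetic reformulation of the paper's modified bags.
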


\begin{proof}
  We consider the separators one by one. 
  
  \medskip
  
  \noindent{\bf Proper 2-separator}.
  Let $(X, \{a, b\}, Y)$ be a split with respect to a proper 2-separator of $G$. 
  We claim that $G[X\cup \{a, b\}]$ admits a tree representation of width at most $\tw(G_X)$ such that some bag contains $a$ and $b$. 
  Indeed, consider a tree representation $(T, \mathcal X)$ of $G_X$ of width $\tw(G_X)$.  
  Recall that $G_X$ contains a marker path~$Q$. 
  Transform $(T, \mathcal X)$ into a tree representation $(T, \mathcal X')$ of $G[X\cup \{a, b\}]$ by replacing each bag $X_i$ by the bag $X'_i =  \{a\} \cup (X_i \sm (Q\sm b))$ if $X_i\cap (Q\sm b) \neq \emptyset$ and  $X'_i = X_i$ otherwise.
  Conditions~\ref{i:TWv} and~\ref{i:TWe} of the definition of tree representation are clearly satisfied by $(T, \mathcal X')$.  
  Condition~\ref{i:TWc} is also clearly satisfied for all vertices of $G\sm a$.  
  For $a$, it follows from Lemma~\ref{l:TWconnect} applied to $Q\sm b$. 
  By Condition~\ref{i:TWe}, some bag $X_i$ contains both $b$ and its neighbour in $Q$.  
  We then see that $X'_i$ contains $a$ and $b$, and the width of $(T, \mathcal X')$ is at most $\tw(G_X)$, thus proving our claim. 
  
  This proof can be mirrored in $Y$ to obtain a tree representation $(S, \mathcal Y')$ of $G[\{a, b\} \cup Y]$, of width at most $\tw(G_Y)$, and such that some bag $Y_j$ contains $a$ and $b$.  
  Now consider the tree obtained from the union of $S$ and $T$ by adding an edge from $i$ to $j$.  Keeping the same bags, we obtain a tree representation of $G$ of width at most $\max (\tw(G_X), \tw(G_Y))$.  
  The treewidth of $G$ is therefore exactly $\max (\tw(G_X), \tw(G_Y))$ since $G_X$ and $G_Y$ are induced subgraphs of~$G$.
  
  \medskip
  
  \noindent{\bf Proper $P_3$-separator}.
  The proof is similar: we consider a split $(X, acb, Y)$ for some proper $P_3$ separator, and prove the existence of tree representation of $G[X \cup acb]$ of width at most $\tw(G_X)$ and such that some bag contain $a$, $c$ and $b$. To prove its existence, we proceed exactly as above to find a tree representation where some bag contain $a$ and $b$. But some bag also contain $a$ and $c$ (because $ac\in E(G)$), and also $c$ and $b$.  Hence, by Lemma~\ref{l:helly}, some bag contain $a$, $c$ and $b$.  We omit further details that are entirely similar to the previous case. 

  \medskip

  \noindent{\bf Clique separator}.
  It follows directly from Lemma~\ref{l:helly} that in any tree representation of some block of decomposition with respect to a clique separator~$K$, some bag contains~$K$.  The result follows.
\end{proof}

\begin{figure}
    \centering
    \includegraphics[height=5cm]{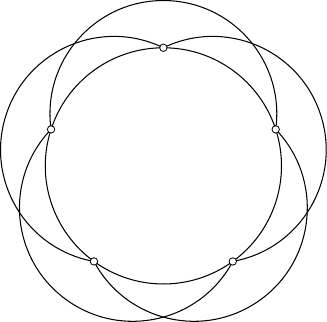}\rule{1cm}{0cm}%
    \includegraphics[height=5.1cm]{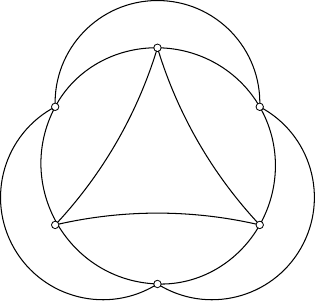}
    \caption{$K_5$ and the octahedron in a daisy fashion}
    \label{fig:K5Octahedron}
\end{figure}

\begin{lemma}
    \label{l:TWdaisy}
    A daisy has treewidth~4 if it is a full $k$-daisy where $k\geq 5$, treewidth~2 if it contains no petal, and treewidth~3 otherwise.   
\end{lemma}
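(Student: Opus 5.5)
We prove the three lower bounds with minors (Lemma~\ref{l:minorTW}) and the three upper bounds with explicit tree representations.

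\textbf{Lower bounds.} A daisy with no petal is a hole. It has $K_3$ as a minor, so its treewidth is at least~2. A daisy with at least one petal contains a wheel $W_i=(H_i,c_i)$. Contracting $H_i$ onto the neighbours of $c_i$ gives a cycle of length at least~3 together with a dominating vertex. This contains $K_4$ as a minor, so the treewidth is at least~3. For a full $k$-daisy with $k\geq 5$ we follow Figure~\ref{fig:K5Octahedron}.
\begin{itemize}
\item If $k=5$, contract each petal $P_i$ onto $c_{i-1}$. The vertex $c_{i-1}$ then becomes adjacent to $c_i$ and $c_{i+1}$, so $c_1,\dots,c_5$ together with the contracted petals form a $K_5$ minor.
\item If $k\geq 6$, we exhibit an octahedron ($K_{2,2,2}$) minor in the same spirit. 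Choose six branch sets made of arcs of $C$ together with suitable petals, so that each branch set is adjacent to all but one of the others. $K_5$ and the octahedron both have treewidth~4, which gives the bound.
\end{itemize}
The main obstacle is making the octahedron construction uniform in $k$. My plan is to group the hole into six consecutive arcs and attach the petals so that petal $P_i$ links the arcs containing $c_{i-1}$ and $c_{i+1}$. I would check carefully which pairs of arcs end up adjacent, separately for even and odd $k$.

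\textbf{Upper bounds.} A hole has treewidth~2, which is standard. For a daisy with petals we cut the hole $C$ at a single vertex to turn the problem into a path-like one. The hole together with its petals behaves like a "cycle with chords of length two" structure. Cut $C$ at some vertex $c_j$:
\begin{itemize}
\item If the daisy is not full, take $c_j$ to be a vertex that is not a petal centre.
\item If the daisy is full, take $c_j$ arbitrary.
\end{itemize}
Then build a path decomposition along $c_{j+1},c_{j+2},\dots$.

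The bags are organised as follows. Each petal $P_i$ is a path whose only attachments are $c_{i-1}$, $c_{i+1}$ and some neighbours of $c_i$. It is therefore handled by bags of the form $\{c_{i-1},c_i,c_{i+1}\}\cup\{\text{two consecutive vertices of } P_i\}$, minus redundancies. Processing the petals in order along the subpath of centres, each "step" bag contains $c_{i-1},c_i,c_{i+1}$ plus the vertex $c_j$ that closes the cycle. That gives at most four hole vertices plus petal vertices. We aim for width~3 in the non-full case: there the cut vertex is not a centre, so it is only needed where the centre sequence wraps, and I would verify that the bags $\{c_{i-1},c_i,c_{i+1},p\}$ with $p$ sweeping along $P_i$ suffice. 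In the full case one extra vertex ($c_j$) is carried throughout, giving width~4.

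\textbf{Small full daisies.} It remains to show that full $3$- and $4$-daisies have treewidth~3. The plan is to give direct decompositions. For $k=3$, bags containing all of $C$ plus one petal vertex at a time give width~3. For $k=4$, use bags $\{c_1,c_3\}$ together with two more vertices, exploiting that $c_1,c_3$ separate $c_2$ from $c_4$.
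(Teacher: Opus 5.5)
Your upper-bound strategy (a path-like decomposition along $C$, with extra vertices carried to close the cycle) is the paper's. Your $K_4$ minor and your $K_5$ minor for $k=5$ are fine. The genuine gap is the lower bound for full $k$-daisies with $k\geq 7$, where the plan you sketch cannot be completed.

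For odd $k$, the octahedron is the wrong minor. Take the full $7$-daisy whose petals are three-vertex paths $x_iz_iy_i$, with $c_iz_i$ the only edge from the centre. A graph and its subdivisions have the same minors of minimum degree at least~$3$, so you may suppress the $x_i,y_i$. Each $z_i$ then has degree~$3$, so it cannot be a branch set on its own. Since $c_{i-1},c_i,c_{i+1}$ are pairwise adjacent in $C_7^2$ (the $7$-cycle plus all chords $c_ic_{i+2}$), removing every $z_i$ from its branch set turns an octahedron model into one of $C_7^2$. But $C_7^2$ is $4$-regular with $14$ edges, and a six-vertex minor of it comes from one vertex deletion or one edge contraction (plus edge deletions). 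Deleting a vertex leaves $10$ edges. Contracting an edge leaves at most $12$, with equality only for an edge $c_ic_{i+2}$, after which the new vertex has degree~$5$. So this daisy has no octahedron minor.

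For even $k\geq 8$, six consecutive arcs fail too. A petal centred at $c_i$ only sees $c_{i-1},c_i,c_{i+1}$, so two non-consecutive arcs can touch only if the arc between them is a single vertex. Every octahedron vertex has four neighbours, so each arc must touch both arcs at distance two from it. This forces all six arcs to be single vertices, i.e.\ $k=6$.

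The missing idea is to group hole vertices by parity. The petal centred at $c_{i+1}$ joins $c_i$ to $c_{i+2}$; attach it to $c_i$. Then all hole vertices of one parity, together with their attached petals, form a connected set.
\begin{itemize}
\item For $k=2\ell+1\geq 5$: the sets for $c_1$, $c_2$ and $c_3$, the union for $c_4,c_6,\dots,c_{2\ell}$, and the union for $c_5,c_7,\dots,c_{2\ell+1}$ form a $K_5$ model. This generalises your $k=5$ case.
\item For $k=2\ell\geq 6$: the sets for $c_1,\dots,c_4$ individually, the union for $c_5,c_7,\dots,c_{2\ell-1}$, and the union for $c_6,c_8,\dots,c_{2\ell}$ form an octahedron model. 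The antipodal pairs are $c_1$ with $c_4$, $c_2$ with the odd union, and $c_3$ with the even union.
\end{itemize}

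Your upper bound also needs smaller repairs.
\begin{itemize}
\item Bags $\{c_{i-1},c_i,c_{i+1},p\}$ with a single petal vertex miss the petal's own edges. Drop $c_{i-1}$ after the first petal vertex and sweep with $\{c_i,c_{i+1},p,p'\}$.
\item With only $c_j$ carried, no bag contains $\{c_{j-1},c_j,c_{j+1}\}$, which your scheme needs for the petal centred at $c_j$. Carry two vertices, as the paper does with $c_1,c_2$.
\item For $k=4$, those bags already have size four, so no special argument is needed. This matters because $\{c_1,c_3\}$ does not separate $c_2$ from $c_4$: the petal centred at $c_1$ joins them.
\item There are no $3$-daisies, since $C$ is a hole.
\end{itemize}
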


\begin{proof}
  We use notation as in the definition of daisies (except that we denote by $P_i$ the petal centred at $c_{i+1}$, so starting at $c_i$).
  
  Let us describe a tree representation of some full $k$-daisy $G$. 
  We start the construction of the tree $T$ with nodes $t_1$, \dots, $t_k$. The bag corresponding to $t_i$ is $X_i = \{c_i, c_{i+1}, c_{i+2}, c_1, c_2\}$ (subscripts are modulo $k$).  
  To take into account the petal $P_1 = p_1 \dots p_\ell$ centred at $c_{2}$ say, we add to $T$ a path $t_1 t'_1 s_1 \dots s_{\ell-1}$, the bag corresponding to $t'_1$ is $\{c_1, c_2, c_3, p_1\}$ and the bag corresponding to $s_j$ is $\{p_j, p_{j+1}, c_2, c_3\}$.  
  We proceed similarly with all petals.  It is a routine matter to check that this is a tree representation of $G$. The width is~3 when $k=4$, and~4 if $k\geq 5$.    
  
  For a daisy that is not full (but still has at least one petal), suppose up to symmetry that there is no petal centred at $c_1$.  Then we proceed exactly as above, except that we remove from $T$ the vertex $t_k$, and the bag corresponding to $t_i$ is $X_i = \{c_i, c_{i+1}, c_{i+2}, c_1\}$. The width of the representation is then~3. 

  A daisy with no petal is a hole, so it has treewidth~2.  

  For lower bounds, we observe that a daisy with at least one petal contains $K_4$ as a minor, so it has treewidth at least~3 by Lemma~\ref{l:minorTW}. 
  Full daisies with at least five petals are more intricate.  It is useful to notice that contracting each petal into an edge transforms the full daisy with five petals into $K_5$, and the full daisy with six petals into the octahedron, see Figure~\ref{fig:K5Octahedron}.  
  It turns out that $K_5$ and the octahedron are among the four well-known  graphs of treewidth~4 that are minimal with respect to the minor relation see~\cite{Arnborg1990Complexity,Satyanarayana1990LinearTime}. 
  We now check that this observation generalises to larger sizes of full daisies.   
  
  For an odd full $k$-daisy with $k\geq 5$, say $k=2\ell +1$, consider the five sets $A=\{c_1\} \cup V(P_1)$, $B=\{c_2\} \cup V(P_2)$, $C = \{c_3\} \cup V(P_3)$, $D = \cup_{2\leq i \leq \ell} (\{c_{2i}\} \cup V(P_{2i}))$ and $E = \cup_{2\leq i \leq \ell} (\{c_{2i+1}\} \cup V(P_{2i+1}))$. 
  It is a routine matter to check that these fives sets form the model of a $K_5$-minor in $G$, so the treewidth is at least~4 by Lemma~\ref{l:minorTW}. 
  
  For an even full $k$-daisy with $k\geq 6$, say $k=2\ell$, consider the six sets $A_1=\{c_1\} \cup V(P_1)$, $B_1=\{c_2\} \cup V(P_2)$, $C_1 = \{c_3\} \cup V(P_3)$, $A_2 = \{c_4\} \cup V(P_4)$, $B_2 = \cup_{2\leq i \leq \ell-1} ( \{c_{2i+1}\} \cup  V(P_{2i+1}))$ and $C_2 = \cup_{3\leq i \leq \ell} (\{c_{2i}\} \cup V(P_{2i}))$. 
  It is a routine matter to check that these six sets form the model of an octahedron minor in $G$, so the treewidth is at least~4 by Lemma~\ref{l:minorTW}. 
\end{proof}

\begin{lemma}
  \label{l:TWcube}
    The cube has treewidth~3.   
\end{lemma}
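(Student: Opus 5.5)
The plan is to prove two bounds, an upper bound of~3 by an explicit tree representation and a lower bound of~3 by a $K_4$-minor together with Lemma~\ref{l:minorTW}. We use the labelling from the proof of Lemma~\ref{l:decCube}: bipartition $A=\{a_1,\dots,a_4\}$ and $B=\{b_1,\dots,b_4\}$, with $a_ib_j\in E$ if and only if $i\neq j$.

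\textbf{Upper bound.} We take a path $t_1t_2t_3t_4$ as the tree, with bags
\[X_1=\{a_1,a_2,a_3,b_4\},\quad X_2=\{a_1,a_2,b_3,b_4\},\quad X_3=\{a_1,b_2,b_3,b_4\},\quad X_4=\{b_1,b_2,b_3,b_4\}\setminus\{b_1\}\cup\{a_4\}.\]
Before settling on these bags, we check the twelve edges directly, which is cleaner.
\begin{itemize}
\item The neighbours of $b_4$ are $a_1,a_2,a_3$, and these are covered by $X_1$.
\item The neighbours of $a_4$ are $b_1,b_2,b_3$, and these force a bag containing $a_4$ with each $b_j$.
\end{itemize}
Instead of fixing bags blindly, we use the standard fact that deleting a vertex lowers the treewidth by at most one. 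We delete $a_4$ and $b_4$. The remaining graph on $a_1,a_2,a_3,b_1,b_2,b_3$ is the $6$-cycle $a_1b_2a_3b_1a_2b_3a_1$, which has treewidth~2. This only yields the bound $4$, so a better route is needed.

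We therefore use a direct decomposition. The bags are $\{a_1,a_2,a_3,b_4\}$, $\{a_1,a_2,a_3,a_4\}$, $\{a_4,a_2,a_3,b_1\}$, $\{a_4,a_1,a_3,b_2\}$ and $\{a_4,a_1,a_2,b_3\}$, forming a star with the central bag $A$. Each $b_i$ lies in exactly one bag, and that bag contains $N(b_i)=A\setminus\{a_i\}$. Each $a_i$ lies in the centre and in some leaves, so its set of bags is connected. Every bag has size~4, so the width is~3.

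\textbf{Lower bound.} We exhibit a $K_4$-minor by contracting the edges $a_1b_2$, $a_2b_1$, $a_3b_4$ and $a_4b_3$. We then check that the four branch sets are pairwise adjacent:
\begin{itemize}
\item $a_1$ is adjacent to $b_3$, $b_4$ and $a_2$ is adjacent to $b_3$, $b_4$;
\item the pair $\{a_1,b_2\}$ and $\{a_2,b_1\}$ is joined by $a_1b_2$? No, that edge lies inside a branch set; the edge $b_2a_2$... we check instead $a_2b_2$, which is not an edge, so we use $a_1\!-\!?$ Taking the edge $b_1a_3$ is not relevant here.
\end{itemize}
Simpler: a graph with minimum degree~3 has treewidth at least~3, since a tree representation of width at most~2 can be pruned so that some leaf bag contains a vertex whose neighbours all lie in a bag of size at most~3, giving degree at most~2. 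The cube is $3$-regular, so its treewidth is at least~3. The main care is in stating this minimum-degree argument properly.
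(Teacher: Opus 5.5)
Your final argument is correct, but the write-up needs its false starts removed. The upper bound you end with is exactly the paper's: the star with centre bag $A$ and leaf bags $\{b_i\}\cup (A\sm\{a_i\})$ is the paper's representation, with $v_{1,j}=a_j$ and $v_{2,j}=b_j$.

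The paper stops there and never justifies $\tw\geq 3$. Your minimum-degree argument therefore supplies something the paper leaves implicit, and it proves the more general fact $\tw(G)\geq\delta(G)$. State it precisely:
\begin{enumerate}
\item Take a tree representation of width $w$, and repeatedly delete a leaf whose bag is contained in its neighbour's bag; this preserves all three conditions.
\item If only one node remains, every vertex has degree at most $w$.
\item Otherwise some leaf bag $X_\ell$ contains a vertex $v$ not in the neighbouring bag. By Condition~\ref{i:TWc}, $v$ lies in no other bag, and by Condition~\ref{i:TWe}, $N(v)\subseteq X_\ell\sm\{v\}$. Hence $\deg(v)\leq w$.
\end{enumerate}

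If you prefer the paper's own style, Lemma~\ref{l:minorTW} with a $K_4$-minor also works; the paper argues this way for daisies in Lemma~\ref{l:TWdaisy}. It does not work with your matching $a_1b_2,a_2b_1,a_3b_4,a_4b_3$. That matching is one of the three parallel classes of the cube, and contracting it gives a $4$-cycle: $\{a_1,b_2\}$ and $\{a_2,b_1\}$ are non-adjacent, as you discovered. Contracting $a_1b_2,a_2b_3,a_3b_4,a_4b_1$ instead does yield $K_4$.

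Finally, delete three things from the write-up. The first list of bags is not a tree representation: $b_1$ lies in no bag, and the edge $a_3b_2$ is not covered. The vertex-deletion detour and the aborted minor computation should also go.
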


\begin{proof}
  The vertex-set of the cube $G$ is $\{v_{i, j} : 1\leq i \leq 2, 1\leq j\leq 4\}$ and $v_{i, j}v_{i', j'} \in E(G)$ if and only if $i\neq i'$ and $j\neq j'$. 
  We consider a tree with five nodes $t_0$, \dots, $t_4$ and four edges $t_0t_1$, \dots, $t_0t_4$.  
  The bag associated to $t_0$ is $X_0 = \{v_{1, 1}, v_{1, 2}, v_{1, 3}, v_{1, 4}\}$.  
  The bag associated to $t_i$ ($1\leq i \leq 4)$ is $X_i = \{v_{2, i}\} \cup X_0 \sm \{v_{1, i}\}$.  It is a routine matter to check that this is a tree representation of the cube of width~3.   
\end{proof}

Theorem~\ref{th:TW} is a corollary of the following. 

\begin{lemma}
  \label{l:tw}
  A (theta, triangle, wac)-free graph has treewidth~4 if it contains a full $k$-daisy where $k\geq 5$, treewidth at most~2 if it is wheel-free, and treewidth~3 otherwise. 
\end{lemma}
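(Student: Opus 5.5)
We argue by induction on $|V(G)|$, using Theorem~\ref{th:Decomp} to reduce to basic graphs and Lemma~\ref{l:decTW} to glue the blocks back together. Lower bounds come from induced subgraphs and minors; upper bounds come from the decomposition.

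\emph{Lower bounds.} If $G$ contains a full $k$-daisy with $k\geq 5$, then by Lemma~\ref{l:TWdaisy} this induced subgraph has treewidth~4. Since treewidth is monotone under taking subgraphs (a special case of Lemma~\ref{l:minorTW}), $\tw(G)\geq 4$. If $G$ contains a wheel, a wheel is a daisy with one petal and contains $K_4$ as a minor, so $\tw(G)\geq 3$.

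\emph{Upper bounds.} We prove the three statements together by induction on $|V(G)|$: $\tw(G)\leq 4$ always; $\tw(G)\leq 3$ if $G$ contains no full $k$-daisy with $k\geq 5$; and $\tw(G)\leq 2$ if $G$ is wheel-free. Apply Theorem~\ref{th:Decomp}.

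If $G$ is basic, it is $K_1$ or $K_2$ (treewidth at most~1), the cube, or a daisy. The cube has treewidth~3 by Lemma~\ref{l:TWcube}, and it contains a wheel but no full daisy on at least five vertices of $C$, since a full $k$-daisy has more than eight vertices when $k\geq 5$. For a daisy, Lemma~\ref{l:TWdaisy} gives the right value: a daisy with no petal is a hole (so wheel-free, treewidth~2), and a daisy with a petal contains a wheel.

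Otherwise $G$ has a clique separator, a proper 2-separator or a proper $P_3$-separator. By Lemma~\ref{l:decTW}, $\tw(G)$ is the maximum of the treewidths of the blocks of decomposition. Each block is a proper induced subgraph of $G$: this is clear for clique separators, and holds for the other two separators by properness, as noted after their definitions. Each block is (theta, triangle, wac)-free, being an induced subgraph of $G$. Moreover, if $G$ contains no full $k$-daisy with $k\geq 5$ (resp.\ is wheel-free), then so does every block, again because blocks are induced subgraphs. The induction hypothesis then gives the required bound for every block, hence for $G$.

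\emph{Expected obstacle.} This direction needs no converse preservation lemma for daisies or wheels, since hereditary freeness passes directly to induced subgraphs. The only point needing care is the $P_3$-separator case of the wheel-free bound. The blocks there contain the hole $P\cup Q$ together with $c$, and $c$ has neighbours on $Q$ because $Y$ is tight, so the blocks may contain a wheel. This is no contradiction: the hole formed by $P$ and an $aYb$-path $Q\subseteq G$ is already in $G$, and together with $c$ it gives a wheel in $G$ itself. So a wheel-free $G$ has no proper $P_3$-separator at all, and only clique separators and proper 2-separators arise in that case. With this observation the induction closes, and combining it with the lower bounds yields the exact values of $\tw(G)$ stated in the lemma.
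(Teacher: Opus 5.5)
Your proof is correct and is essentially the paper's argument: an induction via Theorem~\ref{th:Decomp} and Lemma~\ref{l:decTW}, with base cases from Lemmas~\ref{l:TWcube} and~\ref{l:TWdaisy}, and lower bounds from the $K_4$-minor of a wheel and from the full daisy itself. The only minor difference is that the paper gets the wheel-free bound from Theorem~\ref{cor:us} (so only clique separators are involved), whereas you push it through the same induction; this works because wheel-freeness is inherited by the blocks, and for the same reason your closing ``obstacle'' paragraph, though correct, is not needed.
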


\begin{proof}
  If $G$ is wheel-free, its treewidth is at most~2 by Theorem~\ref{cor:us} and Lemma~\ref{l:decTW}.  If $G$ contains a wheel, the conclusion follows from Theorem~\ref{th:Decomp} and Lemmas~\ref{l:TWcube}, \ref{l:TWdaisy} and~\ref{l:decTW} by a trivial induction.
\end{proof}

We now explain how our results improve bounds on the cliquewidth established in~\cite{DBLP:journals/dm/CameronSHV18}.  
We do not need the definition of cliquewidth.  
We only use a theorem from~\cite{DBLP:journals/siamcomp/CorneilR05} stating that the cliquewidth of a graph $G$ is at most $3\times 2^{\tw(G) - 1}$.  It immediately follows that the cliquewidth of any (theta, triangle, wac)-free graph is at most~24. 
As explained in Section~5 of \cite{DBLP:journals/dm/CameronSHV18}, it then easy to prove that for a (theta, cap, $C_4$, even wheel)-free graph, 
 the cliquewidth  is at most 24 and the treewidth at most  $5  \omega(G) -1$. 

\subsection{Testing for a wac}
\label{s:testWac}

\begin{figure}
    \centering
    \includegraphics[height=3.3cm]{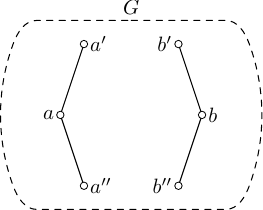}\rule{1cm}{0cm}%
    \includegraphics[height=4.2cm]{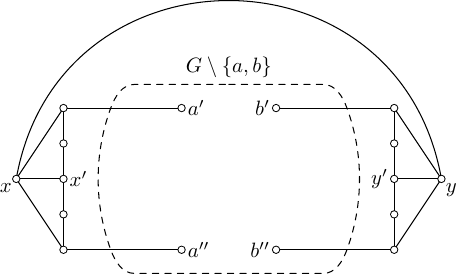}
    \caption{Graphs $G$ and $G'$}
    \label{fig:wacNPC}
\end{figure}

 A \emph{hub} in a graph is a vertex that has at least three neighbours of degree at least~3. Observe that the centre of some wheel is a hub. The proof of the following theorem is similar to the proof from~\cite{diotTaTr:13} that deciding if a graph contains a wheel is NP-complete. 

\begin{theo}
\label{th:wacNPC}
Deciding if an input bipartite graph contains a wac (resp.\ a turtle, a c-wac) is an NP-complete problem. 
\end{theo}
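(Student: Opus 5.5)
Membership in NP is immediate: a wac (resp.\ turtle, c-wac) is certified by giving the hole $H$ and the two adjacent centres $c, c'$. One checks in polynomial time that $H$ is chordless, that each of $c, c'$ has at least three neighbours in $H$, and, for turtles versus c-wacs, whether some sector of $(H, c')$ contains $N_H(c)$.

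For hardness I would follow the reduction of~\cite{diotTaTr:13} for wheels, which ultimately goes back to Bienstock's problem: decide whether a graph contains a hole through two prescribed vertices. I would use the variant that is NP-complete for bipartite inputs with the two prescribed vertices of degree~2; the usual construction from 3-SAT can be made bipartite by subdividing edges. Call it Bienstock's problem with instance $G$, $u$, $v$.

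Given $(G,u,v)$, build $G'$ by attaching small bipartite gadgets at $u$ and $v$ so that any hole through both gadgets can be extended to a wac, and conversely. A concrete choice is the following. Replace $u$ by a path $u_1u_2u_3u_4u_5$ and $v$ by a path $v_1\dots v_5$, each joined to the original neighbours of $u$ (resp.\ $v$) at its ends. Then add two adjacent new vertices $x,y$: $x$ is adjacent to $u_2$ and $u_4$, $y$ is adjacent to $v_2$ and $v_4$, and $x$ and $y$ get further neighbours so that each has at least three neighbours on any hole through both gadgets. For instance, add a pendant-like path from $x$ to $v_3$, or simply make $x$ adjacent to $u_2,u_4,v_3$ and $y$ adjacent to $v_2,v_4,u_3$, with parities adjusted by subdivision so that $G'$ stays bipartite. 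With both centres spread over both gadgets the wac is crossing; with both centres confined near one gadget it is a turtle. So two slightly different gadgets handle the turtle and c-wac cases, and either serves for general wacs.

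The forward direction is routine: a hole through $u$ and $v$ in $G$ yields a hole through both gadget paths in $G'$, and $x,y$ each have at least three neighbours on it. The converse is the main obstacle. I must show that every wac $(H,c,c')$ of $G'$ uses the gadgets in the intended way, so that $H$ passes through both $u$- and $v$-gadgets and restricts to a hole of $G$ through $u$ and $v$. My argument would be as follows. Since $u,v$ have degree~2 in $G$, vertices of $G$ other than hubs created by the gadget cannot serve as centres, so I would ensure $G$ itself is wac-free. The easiest way is to make $G$ wheel-free, or to note that its vertices of degree at least~3 are never adjacent to two such vertices, by subdividing every edge of $G$ twice. The centres must then be among gadget vertices, essentially $\{x,y\}$. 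A case analysis on which gadget vertices lie on $H$ then forces $H$ through both gadgets. Keeping bipartiteness and the degree-based hub argument compatible is the delicate part.
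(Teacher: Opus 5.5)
You have the right overall shape: a reduction from finding a hole through two prescribed degree-2 vertices, the two vertices replaced by a gadget with two new adjacent vertices $x,y$, and two gadget variants for turtles and c-wacs. That matches the paper, and NP-membership is fine. But there is a genuine gap in the converse, and the device you propose to close it fails.

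You need that no vertex of $G$ can serve as a centre of a wac in $G'$. You suggest getting this by subdividing every edge of $G$ twice, but that destroys the hardness of the source problem. After subdividing every edge (even once), the holes of the new graph are exactly the subdivided cycles of $G$. So ``is there a hole through $u$ and $v$'' becomes ``is there a cycle through $u$ and $v$'', which is decidable in polynomial time by Menger's theorem. The same caveat applies to your remark that the 3-SAT construction ``can be made bipartite by subdividing edges'': that only works if done selectively.

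Wac-freeness of $G$ alone would not suffice anyway, since a wac of $G'$ may mix vertices of $G$ with gadget vertices. Your sentence ``since $u,v$ have degree~2 in $G$, vertices of $G$ \dots\ cannot serve as centres'' has no justification.

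The missing ingredient is the one the paper uses. The source problem remains NP-complete for bipartite \emph{hub-free} graphs, which is precisely what \cite{diotTaTr:13} proves; a hub is a vertex with at least three neighbours of degree at least~3. Every wheel centre is a hub, because each of its rim neighbours has two rim neighbours plus the centre. So it suffices to design the gadget so that no vertex other than $x,y$ can be a wheel centre in $G'$.

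The neighbours of $u$ and $v$ keep their degree profile as long as the gadget vertices attached to them have degree~2. In your concrete gadget, $u_3$ and $v_3$ do become hubs, so you must check separately that they cannot be centres. This is easy: a hole through $u_2,u_4,y$ avoiding $u_3$ must route both $u_2$ and $u_4$ through $x$, making $xy$ a chord.

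Then every wac of $G'$ has the form $(C,x,y)$. The neighbours of $x$ and $y$ force $C$ through both gadgets, and $C$ minus the gadgets gives two disjoint paths that, together with $u$ and $v$, form the required hole of $G$. As in the paper, you also need $u$ and $v$ on opposite sides of the bipartition, arranged by a local modification near one of them.

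With hub-freeness, the case analysis you leave as the main obstacle takes a few lines; without it, the converse is not established.
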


\begin{proof}
  It  proven in~\cite{diotTaTr:13} that the problem $\mathcal P$ of finding a hole through two prescribed vertices $a$ and $b$ of degree~2 in a bipartite hub-free graph is NP-complete.  This is not explained in~\cite{diotTaTr:13}, but by possibly subdividing edges and possibly replacing $b$ by a neighbour of $b$, it may be assumed that $a$ and $b$ are in different sides of the bipartition of $G$.

  We reduce $\mathcal P$ to our problem. So, consider an instance $(G, a, b)$ of $\mathcal P$, see the first graph represented in Figure~\ref{fig:wacNPC}. 
  The neighbours of $a$ and $b$ are $a'$, $a''$, $b'$ and $b''$.  
  Build an instance $G'$ of our problem by deleting $a$ and $b$ and adding vertices to obtain the second graph represented in  Figure~\ref{fig:wacNPC}.
  Since $a$ and $b$ are in different side of the bipartition of $G$, $G'$ is bipartite. 
  Let us prove that $G$ contains a hole through $a$ and $b$ if and only if $G'$ contains a wac (resp.\ a turtle). 
  
  If $G$ contains a hole $H$ through $a$ and $b$, then $G'$ obviously contains a wac (that turns out to be turtle), induced by $H\sm \{a, b\}$ and  $G'\sm G$.   
  
  Conversely, suppose that $G'$ contains a wac (resp.\ a turtle).  Since $x$ and $y$ are the only hubs of $G'$ (because $G$ is hub-free), the wac (resp.\ turtle) must have the form $(C, x, y)$.  The hole $C$ must go through $a'$, $a''$, $b'$  and $b''$, and must therefore contain an $a'b'$-path and a $a''b''$ path, or an $a'b''$-path and a $a''b'$-path.  In either cases, these paths together with $a$ and $b$ form a hole in~$G$.   
  
  We proved that detecting a wac (or a turtle) is an NP-complete problem. To prove that detecting a c-wac is also NP-complete, the proof is the same, except that in $G'$, we erase the edges $xx'$ and $yy'$, and we add in place the edges $xy'$ and $yx'$. 
\end{proof}

\subsection{Algorithms}
\label{ssec:algo}

We now describe an algorithm not relying on the treewidth to build a decomposition tree $T_G$ of some input graph $G$ (this is not a tree representation). 
The nodes of $T_G$ are graphs.  The tree $T_G$ is rooted, and its root is $G$ itself. The following rules apply to construct the rest of $T_G$.

\begin{itemize}
    \item If some node of $T_G$ is a graph $H$ with some clique separator, then for some clique separator $K$ of $H$ with $|K|$ minimal, the children of $H$ are the blocks of decomposition of $H$ with respect to $K$.
    \item If some node of $T_G$ is an atomic graph $H$ with some proper 2-separator, then for some proper 2-separator of $H$ with split $(X, \{a, b\}, Y)$, the children of $H$ are the blocks of decomposition $H_X$ and $H_Y$ of $H$ with respect to $\{a, b\}$.
    \item If some node of $T_G$ is a superatomic graph $H$ with some proper $P_3$-separator, then for some proper $P_3$-separator of $H$ with split $(X, acb, Y)$, the children of $H$ are the blocks of decomposition $H_X$ and $H_Y$ of $H$ with respect to $acb$.
\end{itemize}

Note that for any graph $G$, any decomposition tree $T_G$ is finite because a node of $T_G$ has strictly more vertices than its descendants (if any). In particular a trivial algorithm computes in finite time (but possibly not polynomial) a decomposition tree for any input graph. 

\begin{lemma}
  \label{l:decLeaves}
  A graph $G$ is (theta, triangle, wac)-free (resp.\ (theta, triangle, even wheel)-free, (even hole, triangle)-free,  bipartite (theta, wac)-free)) if and only if all the leaves of $T_G$ are basic graphs from this class.
\end{lemma}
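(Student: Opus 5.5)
We argue by induction along the decomposition tree $T_G$, that is, by induction on $|V(G)|$, since every child of a node has strictly fewer vertices than the node. The induction rests on three facts established earlier. First, by Lemmas~\ref{l:decClique}, \ref{l:decProper2sep} and~\ref{l:decProperP3Sep}, each of the properties theta-free, triangle-free, wac-free, even-wheel-free, even-hole-free and bipartite holds for a node exactly when it holds for all of its children. The only care needed is that the $P_3$-separator rule is applied only to superatomic nodes and the 2-separator rule only to atomic ones, but those lemmas have no such hypothesis, so this causes no problem. Second, a leaf of $T_G$ has no clique separator, no proper 2-separator and no proper $P_3$-separator. Third, Theorem~\ref{th:Decomp} tells us that a (theta, triangle, wac)-free graph with none of these separators is basic.

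For the forward direction, suppose $G$ is (theta, triangle, wac)-free. Every node of $T_G$ is an induced subgraph of $G$, hence also (theta, triangle, wac)-free. In particular every leaf is such a graph with no clique separator, no proper 2-separator and no proper $P_3$-separator, so Theorem~\ref{th:Decomp} shows it is basic. Being an induced subgraph of $G$, each leaf lies in whichever subclass $G$ lies in. For the converse, suppose every leaf is a basic graph of the class. We show by induction from the leaves to the root that every node is in the class: a node is in the class whenever all its children are, by the ``if'' parts of the three decomposition lemmas. For the three subclasses, we use that (theta, triangle, even wheel)-free graphs are exactly triangle-free odd-signable graphs. We also use Lemma~\ref{l:oddS}, which shows that even-hole-free and triangle-free implies theta-free and wac-free, and that bipartite implies triangle-free. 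So each subclass is an intersection of properties covered by those lemmas; for instance, bipartite (theta, wac)-free means bipartite, theta-free and wac-free.

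The only subtle point is making sure ``basic graphs from this class'' at a leaf is consistent with the properties propagated upward. A basic graph in the class trivially has the class properties, and the listed properties are exactly the ones handled by the lemmas. Checking that each subclass is expressible through properties appearing in the lists of Lemmas~\ref{l:decClique}--\ref{l:decProperP3Sep} is the main point to verify, and this follows from Lemma~\ref{l:oddS}.
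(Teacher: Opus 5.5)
Your proof is correct and follows the paper's argument, an induction over $T_G$ using Lemmas~\ref{l:decClique}, \ref{l:decProper2sep} and~\ref{l:decProperP3Sep}; you also rightly make explicit what the paper's one-line proof leaves implicit, namely that the forward direction needs Theorem~\ref{th:Decomp} to conclude that a leaf (which has none of the three separators) is basic. One small remark: Lemma~\ref{l:oddS} is really needed in the forward direction, to know that each subclass lies inside the (theta, triangle, wac)-free graphs so that Theorem~\ref{th:Decomp} applies. In the converse, each class is already directly an intersection of properties listed in the three decomposition lemmas.
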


\begin{proof}
  Follows from Lemmas~\ref{l:decClique}, \ref{l:decProper2sep}, \ref{l:decProperP3Sep} and a trivial induction.
\end{proof}

\begin{lemma}
  \label{l:algDec}
  There exists an algorithm whose input is a graph $G$, running in time $O(|V(G)|^3|E(G)|)$, and whose output is one of the following: a triangle of $G$, a clique separator of $G$, a proper 2-separator of $G$, a proper $P_3$-separator of $G$, or a certificate that none of those exists in $G$.    
\end{lemma}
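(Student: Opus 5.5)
The algorithm runs four tests in sequence and stops at the first success. Let $n=|V(G)|$ and $m=|E(G)|$.

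First, triangles. For each edge $uv$ we compare $N(u)$ and $N(v)$ using adjacency arrays, which costs $O(nm)$ in total. If a triangle is found we output it.

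Second, clique separators. Since $G$ is now triangle-free, a clique separator has at most two vertices. The empty set is a clique separator exactly when $G$ is disconnected. For each vertex $v$, and then for each edge $uv$, we test whether $G\sm\{v\}$ (resp.\ $G\sm\{u,v\}$) is disconnected by a graph search. This costs $O(n(n+m))$ plus $O(m(n+m))$, both within the budget.

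Third, proper 2-separators, now assuming $G$ atomic. For each non-adjacent pair $\{a,b\}$ ($O(n^2)$ pairs) we compute the components of $G\sm\{a,b\}$ in $O(n+m)$.
\begin{itemize}
\item If there are at least two components, Lemma~\ref{l:eligible2K1} says that in an atomic theta-free graph there are exactly two. We are not assuming theta-freeness, so we simply check whether there are exactly two components $X$ and $Y$. Note that a third component already certifies a theta, but we do not need that.
\item For each side we check in linear time whether $G[X\cup\{a,b\}]$ contains an $ab$-path and is not itself an $ab$-path. This holds exactly when $G[X\cup\{a,b\}]$ is connected and is not an induced path with ends $a,b$. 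The latter is a degree check: $a$ and $b$ have degree $1$, every other vertex has degree $2$, and the graph is connected.
\end{itemize}
The total is $O(n^2(n+m))=O(n^2m)$ for connected $G$.

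Fourth, proper $P_3$-separators, now assuming $G$ superatomic. We enumerate all induced paths $acb$, that is, a vertex $c$ and a pair of non-adjacent neighbours $a,b$ of $c$. There are $\sum_c \deg(c)^2\le n\sum_c\deg(c)=O(nm)$ such triples. For each triple we spend $O(n+m)$ time.
\begin{itemize}
\item Compute the components of $G\sm\{a,c,b\}$ and require exactly two, $X$ and $Y$. Require that $c$ has neighbours in both, and that $a$ and $b$ each have neighbours in both; the latter gives the existence of $aXb$- and $aYb$-paths.
\item Check that neither $G[X\cup\{a,b\}]$ nor $G[Y\cup\{a,b\}]$ is an $ab$-path, as in the third step.
\item Test looseness. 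A component $X$ is loose iff $a$ and $b$ lie in the same component of $G[(X\sm N(c))\cup\{a,b\}]$. Indeed, a shortest $ab$-path in that graph is induced and has no internal vertex adjacent to $c$. Conversely, an $aXb$-path avoiding $N(c)$ internally lives in that graph.
\item Require that exactly one of $X$, $Y$ is loose, and orient the split accordingly.
\end{itemize}
Every check is linear, giving $O(nm(n+m))$, i.e.\ $O(n^2m)$ for connected $G$ (we may reduce to connected $G$ first). This is within the claimed $O(n^3m)$.

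If all four tests fail, we output the certificate that none of the structures exists; the exhaustive enumeration is itself the certificate.

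The main obstacle I expect is making sure the definitions of ``proper'' are tested faithfully without assuming theta-freeness. In particular:
\begin{itemize}
\item We must explicitly require exactly two components rather than rely on Lemma~\ref{l:eligible2K1} or Lemma~\ref{l:eligibleP3}.
\item The looseness characterisation via $G[(X\sm N(c))\cup\{a,b\}]$ must be stated carefully, since $a$ and $b$ themselves are adjacent to $c$.
\end{itemize}
Beyond that, the runtime bookkeeping is routine and leaves a spare factor of $n$.
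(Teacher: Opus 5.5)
Your proposal is correct and is essentially the paper's proof, with more detail: brute-force enumeration of vertex sets of size at most two and of induced paths $acb$, a linear-time connectivity test for each candidate, direct checks of the ``proper'' conditions, and the same looseness test (delete the neighbours of $c$ in $X$ and check whether an $ab$-path survives in $X\cup\{a,b\}$). One correction to your bookkeeping: for connected $G$ the fourth step costs $O(nm(n+m))=O(nm^2)$, not $O(n^2m)$, so there is no spare factor of $n$ (on $K_{n/2,n/2}$ you perform $\Theta(n^3)$ checks of cost $\Theta(n^2)$ each), but since $m\le n^2$ this still lies within the claimed $O(|V(G)|^3|E(G)|)$.
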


\begin{proof}
  The algorithm simply enumerates by brute force every set of vertices of $G$ on at most 2~vertices, and every set made of one vertex and one edge. Testing if such a set forms a clique is trivial. Testing if it is a separator can be done in time $O(n+m)$  by a simple connectivity check.  Testing all the conditions of being proper is easy. For testing a loose or tight component $X$ of a $P_3$-separator $acb$, just remove all neighbours of $c$ in $X$ and check if some $ab$-path still exists in $X\cup \{a, b\}$.  If it exists, then $X$ is loose, otherwise it is tight.  
\end{proof}

We now introduce notions needed to bound the number of nodes of $T_G$ (provided that $G$ is (theta, triangle, wac)-free).

An \emph{ear} in a graph $G$ is a path $P = x\dots y$ such that $x$ and $y$ both have degree $2$ (in $G$), their respective neighbours $a$ and $b$ not in $P$ are distinct and have a common neighbour $c$, every internal vertex of $P$ either has degree~2 or has degree~3 and is adjacent to $c$, and at least one internal vertex of $P$ is adjacent to $c$. Moreover we require that $P$ is maximal with respect to these properties.  The following lemma justifies the name``ear''. 

\begin{lemma}
If $G'$ is a graph obtained from an atomic graph $G$ by a good ear addition, then the  path added to $G$ to obtain $G'$ is an ear of $G'$. 
\end{lemma}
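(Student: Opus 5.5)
We check the defining conditions of an ear of $G'$ one by one for the added path $P = x\dots y$, attached to the good $P_3$ $acb$ of $G$, and then prove maximality.

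\textbf{Local conditions.} These follow from the definition of ear addition.
\begin{enumerate}
\item In $G'$, $x$ is adjacent only to $a$ and its successor on $P$, since the only edges are those of the path $axPyb$ and those from $c$. It is not adjacent to $c$, because $xa$ is an edge and $a$ is adjacent to $c$, and no two consecutive vertices of $axPyb$ are adjacent to $c$. So $x$ has degree~2. Symmetrically, $y$ has degree~2.
\item If $P$ has length~0, then $x=y$ is adjacent to $a$ and $b$ but not to $c$. This contradicts $c$ having a neighbour in the interior of $P$, so $x\neq y$.
\item Every internal vertex of $P$ has degree~2, or degree~3 when it is adjacent to $c$. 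At least one of them is adjacent to $c$.
\item The neighbours $a$ and $b$ of $x$ and $y$ outside $P$ are distinct and have the common neighbour $c$.
\end{enumerate}

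\textbf{Maximality.} Suppose some ear $P'$ of $G'$ strictly contains $P$. Then $P'$ extends $P$ beyond $x$ or $y$, say beyond $x$, so $a\in P'$. The end of $P'$ on that side has degree~2 in $G'$. Also $a$ is an internal vertex of $P'$, or that end itself, so $a$ has degree~2 or degree~3 with a neighbour $c'$, where $c'$ is the common vertex of $P'$.

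Consider the vertex $c'$ playing the role of $c$ for $P'$. Since $P\subseteq P'$ and some internal vertex of $P$ is adjacent to $c$, while internal vertices of $P'$ have degree at most~3 with their only extra neighbour being $c'$, we get $c'=c$.

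Now the vertex $a$ lies on $P'$ and is adjacent to $c$. In $G'$, the neighbours of $a$ are then: its neighbour $x$ on $P$, the vertex $c$, and at least one further vertex $a_1$ of $G$, namely its other neighbour along $P'$ or the neighbour that makes $a$ an end. The degree constraints force $a$ to have exactly these neighbours, so $a$ has degree at most~2 in $G$, with neighbours $c$ and possibly $a_1$.

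\textbf{Using atomicity and goodness.} This is the main step.
\begin{enumerate}
\item If $a$ has degree~1 in $G$, then $c$ is a clique separator of $G$, unless $G$ is just an edge. Since $acb$ is a $P_3$ with $a\ne b$, that is impossible. This contradicts $G$ being atomic.
\item Otherwise $a$ has degree~2 in $G$ with neighbours $c$ and $a_1$, where $a_1$ lies on $P'$. Continuing along $P'$ in $G$, the path $P'\cap G$ starting at $a$ is a path of vertices of degree~2 or~3, whose extra neighbour is $c$.
\item Since $G$ is atomic, there is an $ab$-path in $G\sm c$. It must leave $a$ through $a_1$ and follow $P'\cap G$ until it reaches the end $z$ of $P'$. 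But $z$ has degree~2 in $G'$, so $z$ is a dead end unless it continues to the far neighbour $a^*$, which is the vertex adjacent to $c$ at that end.
\item Hence this path, or the part of $P'$ that lies in $G$, contains an internal vertex adjacent to $c$, or ends at a vertex adjacent to $c$ other than $a$ and $b$. The first case contradicts $acb$ being good. In the second case, $\{c, a^*\}$ or $\{c\}$ separates $P'\cap G$ from the rest of $G$, contradicting atomicity.
\end{enumerate}

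The expected obstacle is making this last case analysis airtight. There are two subcases, according to whether $P'$ wraps entirely through $G$ to $b$ or ends inside $G$. The plan is to show in each subcase that either some $ab$-path in $G\sm c$ has an internal neighbour of $c$, or $\{c\}$ or an edge $\{c,\cdot\}$ is a clique separator of $G$.
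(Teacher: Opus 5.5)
Your route is the paper's. Once a longer ear $P'$ runs past $a$, the segment of $P'$ from $a$ to its far end $z$ lies in $G$. It is attached to the rest of $G$ only through $c$ and the far attachment vertex $a^*$, and $a^*$ is adjacent to $c$ because $c'=c$. The paper takes an $a'b$-path $R$ in $G\sm\{c,a\}$ (its $a'$ is your $a^*$) and prefixes this hanging segment, getting an $ab$-path in $G\sm c$ through $a'$ and contradicting goodness. Your item~3, a shortest $ab$-path in $G\sm c$ forced along the segment to $a^*$, is the same idea.

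As written the proof is unfinished, but the step you flag as the main obstacle is not a real one. The only missing remark is that $a^*\notin\{a,b\}$:
\begin{itemize}
\item $a^*\neq a$ because $a\in P'$.
\item If $y$ is still an end of $P'$, then $b$ is the other attachment of $P'$, and the two attachments of an ear are distinct by definition, so $a^*\neq b$.
\item If $P'$ also runs past $y$, then $b\in P'$ while $a^*\notin P'$, so again $a^*\neq b$.
\end{itemize}
In either case $b$ is also not on the segment from $a$ to $z$, so your worry about $P'$ ``wrapping through to $b$'' never materialises. With this remark, both of your alternatives apply unconditionally and no subcase analysis is needed.
\begin{itemize}
\item $a^*$ is an internal vertex of the forced $ab$-path in $G\sm c$ and is adjacent to $c$, which contradicts goodness.
\item The neighbourhood in $G$ of the segment from $a$ to $z$ is contained in the clique $\{c,a^*\}$, and $b$ lies outside, so $\{c,a^*\}$ is a clique separator of $G$. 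When $P'$ runs past both ends, it is this segment, not all of $P'\cap G$, that gets cut off.
\end{itemize}
The second route uses only atomicity. It therefore shows slightly more than the paper's argument: this lemma holds for any ear addition to an atomic graph, and goodness of $acb$ is not needed here.

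A minor slip: if $a$ were an end of $P'$, its outside neighbour would be $c$ itself, not a further vertex $a_1$. Your degree-one atomicity argument still disposes of that case, and so does the fact that $c=c'$ cannot be adjacent to itself.
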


\begin{proof}
  Suppose that the ear addition consists in adding $P$ to $acb$ (a good $P_3$ of $G$).   
  All conditions of the definition of an ear are clearly satisfied by~$P$ except possibly the maximality of $P$.  
  So suppose that $P$ is not maximal. Up the symmetry between $a$ and $b$, this means that there exists a path $Q = a'\dots b$ containing $P\cup\{a\}$ and such that $a'$ is adjacent to $c$. 
  Moreover, every internal vertex of $Q$ has degree~2 or is adjacent to $c$ and has degree~3.  
  Since $G$ is atomic, some $a'b$-path $R$ exists in $G\sm ca$.  
  Because of the degree conditions, $aQa'Rb$ is a path of $G\sm c$ that contains an internal vertex adjacent to $c$ (namely $a'$), a contradiction to $acb$ being a good $P_3$ of $G$.  
\end{proof}

\begin{lemma}
  \label{l:earDisjoint}
  If $G$ is an atomic graph and not a wheel, then distinct ears of $G$ are disjoint. 
\end{lemma}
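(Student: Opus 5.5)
Suppose for a contradiction that $P = x\dots y$ and $P' = x'\dots y'$ are two distinct ears of $G$ that share a vertex. Let $a, b$ be the neighbours of $x, y$ outside $P$ with common neighbour $c$, and define $a', b', c'$ in the same way for $P'$. The plan is to show that the overlap forces $P$ and $P'$ to be maximal runs of the same degree-constrained structure, so they coincide, unless $G$ is a wheel or has a clique separator.

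\textbf{Step 1: the shared vertices lie in a common run.} Every vertex of an ear has degree~2, or has degree~3 with one neighbour equal to the ear's centre. Take a shared vertex $v$.
\begin{itemize}
\item If $v$ has degree~2, then both of its neighbours lie in $P \cup \{a,b\}$ and also in $P' \cup \{a',b'\}$.
\item If $v$ has degree~3, then its neighbour outside the path is $c$ for $P$ and $c'$ for $P'$. So either $c = c'$, or $c' \in P$ (and symmetrically $c \in P'$).
\end{itemize}
In the generic case $c = c'$, walking along from $v$ in both directions, the two paths follow the same vertices for as long as degrees stay $\le 2$, or stay equal to~3 with the extra neighbour being $c$. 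They can only stop at vertices of degree~2 whose next neighbour breaks the pattern, namely $a,b$ and $a',b'$. Maximality in the definition of an ear then forces $P = P'$: if one path stopped strictly earlier, it could be extended within the other, contradicting maximality. A little care is needed because an endpoint $x$ of an ear has degree~2 and is not adjacent to $c$. The stopping point is therefore determined by the configuration, so $\{a,b\} = \{a',b'\}$ and $P = P'$.

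\textbf{Step 2: the case $c \neq c'$ (main obstacle).} Suppose some internal vertex $u$ of $P$ is adjacent to $c$ and lies in $P'$. Then, since $u$ has degree~3, its three neighbours are its two $P$-neighbours and $c$. In $P'$, $u$ either is an endpoint or an interior vertex of degree~2 (impossible, as $u$ has degree~3), or has degree~3 with third neighbour $c'$. So $c'$ is a $P$-neighbour of $u$, and $c'$ lies in $P \cup \{a,b\}$.

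Then $c'$ has degree~2 or~3. But $c'$ has at least two neighbours in $P'$ (at least one interior neighbour, plus $a'$ and $b'$) together with its neighbour in $P$. Counting these degrees and neighbourhoods, I expect to deduce that the whole graph is $P \cup P' \cup \{a, b, c\}$ with a very small structure. Using atomicity, any remaining part of $G$ would attach only through a clique of size at most~2. The goal is to conclude that $G$ is a hole together with one vertex $c$, i.e.\ a wheel, which is excluded by hypothesis.

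This case analysis, which pins down that the overlap with distinct centres forces $G$ to be a wheel, is where I expect the real work. I would first try the explicit bound that every vertex of $P \cup P'$ has degree at most~3, and then show that $\{a,b\}$ or a single vertex would be a clique separator of $G$ unless $V(G) = P \cup P' \cup \{c\}$.
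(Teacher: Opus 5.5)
There is a genuine gap in Step~1. You claim that once $c=c'$, maximality forces $P=P'$ because ``the path that stops earlier could be extended within the other''. This is false as stated. Step~1 never uses the hypotheses that $G$ is atomic and not a wheel, and this is exactly where they are needed. Suppose $P$ ends at $x$ while $P'$ continues through $a$ as an interior vertex, say with $a\in x'P'x$. The natural extension $x'P'a\,xPy$ of $P$ is an ear only if it is an induced path and the outer neighbours $a'$ (of $x'$) and $b$ (of $y$) are distinct. Both conditions can fail:
\begin{itemize}
\item $P'$ may wrap around, so that $b\in x'P'a$;
\item $P'$ may end next to $b$, that is, $a'=b$.
\end{itemize}
In either configuration, $P\cup P'\cup\{b\}$ induces the rim of a wheel $W$ centred at $c$ in which only $b$ and $c$ can have neighbours outside $W$. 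Hence $G=W$ or $\{b,c\}$ is a clique separator, and this is the step the paper's proof rests on. Without it your claim fails. Take the atomic wheel with rim $v_0v_1\dots v_{11}v_0$ and centre $c$ adjacent to $v_0,v_3,v_6,v_9$. The paths $v_1\dots v_8$ and $v_4\dots v_{11}$ are distinct maximal ears, both with centre $c$, and they overlap.

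Step~2 is only a plan, and it aims at the wrong conclusion: when $c'\neq c$ there is no wheel to find, just a degree contradiction. Take $u$ as in your Step~2. Then $c'$ is adjacent to $a'$, $b'$ and an interior vertex of $P'$, so as a vertex of $P$ it must be an interior vertex of degree~$3$. The $P'$-neighbours of $u$ are then $c$ and the other $P$-neighbour of $u$. So $c\in P'$ has degree at most~$3$, yet $c$ is adjacent to the four distinct vertices $a,b,u,c'$. Your case split also omits overlaps with $c\neq c'$ in which every shared vertex has degree~$2$.

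The paper avoids all this by taking a vertex of $P'\setminus P$ with a neighbour in $P\cap P'$.
\begin{itemize}
\item That vertex must be $a$, $b$ or $c$, and the case $c$ is excluded by degree counting.
\item So, say, $a$ is an interior vertex of $P'$, and from this one deduces $c'=c$.
\item Then either $P$ extends (contradicting maximality) or one lands in the wheel configuration above.
\end{itemize}
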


\begin{proof}
  Let $P = x\dots y$  and $P' = x'\dots y'$ be distinct ears of $G$, and $a$, $b$, $c$, $a'$, $b'$ and $c'$ be named accordingly as in the definition of ears. Note that $a$ (resp. $b$) has degree at least~3 in $G$, for otherwise $c$ and some internal vertex of $P$ form a clique separator of $G$.
  
  If $P\cap P'\neq \emptyset$, then by the maximality condition in the definition of ears, both $P\sm P'$ and $P'\sm P$ are non-empty. So, since $P'$ is a path, there exists a vertex $u\in P'\sm P$ that has some neighbour in $P\cap P'$. Now notice that, by definition of an ear, the only vertices in $G$ that are adjacent to a vertex of $P$ are $a,~c$ and $b$. So either $u=c$ or, up the symmetry between $a$ and $b$, $u=a$. If $u=c$, then the degree of $c$ in $G$ must be~3, and $c$ is an internal vertex of $P'$. However, all its neighbours have degree at least~3 and its not possible to have two consecutive degree~3 vertices in an ear.
  Hence, $u=a$ and $x\in P'$.
  
  So $a$ is a degree~3 vertex of $P'$, and therefore an internal vertex of $P'$.  Hence, $c\notin P'$ since no two vertices of degree~3 of $P'$ can be adjacent. It follows that $c'=c$.  
  
  Up to the symmetry between $x'$ and $y'$, we may assume that $a \in x' P' x$. If $b\in x' P' a$ or $a'=b$, then because of the degree conditions, all vertices of $P$ and $P'$ (including $a$) have degree~2, or have degree~3 and are adjacent to~$c$.  
  It follows that $P\cup P'$ induces the rim of a wheel $W$ centred at $c$, and only $c$ and $b$ can have neighbours in  $G\sm W$.  
  Hence, either $G=W$, or $cb$ is a clique separator of $G$, a contradiction in both cases.  
  Hence, $b\notin x' P' a$ and $a'\neq b$.  
  It follows that $x'P'a xPy$ is an ear (with $x'a'\in E(G)$, $yb\in E(G)$, $a'\neq b$ and $c$ is a common neighbour of $a'$ and $b$).  
  This contradicts the maximality of~$P$.
\end{proof}

\begin{proof}
  Let $P = x\dots y$  and $P' = x'\dots y'$ be distinct ears of $G$, and $a$, $b$, $c$, $a'$, $b'$ and $c'$ be named accordingly as in the definition of ears. Note that $a$ has degree at least~3 in $G$, for otherwise $c$ and some internal vertex of $P$ form a clique separator of $G$.
  
  If $P\cap P'\neq \emptyset$, then by the maximality condition in the definition of ears, both $P\sm P'$ and $P'\sm P$ are non-empty. So, since $P'$ is a path, there exists a vertex $u\in P'\sm P$ that has some neighbour in $P\cap P'$.  Because of the degree conditions in the definition of ears, up the symmetry between $a$ and $b$, we have $u=a$ and $x\in P'$. So $a$ is a degree~3 vertex of $P'$, and therefore an internal vertex of $P'$.  Hence, $c\notin P'$ since no two vertices of degree~3 of $P'$ can be adjacent. It follows that $c'=c$.  
  
  Up to the symmetry between $x'$ and $y'$, we may assume that $a \in x' P' x$. If $b\in x' P' a$ or $a'=b$, then because of the degree conditions, all vertices of $P$ and $P'$ (including $a$) have degree~2, or have degree~3 and are adjacent to~$c$.  
  It follows that $P\cup P'$ induces the rim of a wheel $W$ centred at $c$, and only $c$ and $b$ can have neighbours in  $G\sm W$.  
  Hence, either $G=W$, or $cb$ is a clique separator of $G$, a contradiction in both cases.  
  Hence, $b\notin x' P' a$ and $a'\neq b$.  
  It follows that $x'P'a xPy$ is an ear (with $x'a'\in E(G)$, $yb\in E(G)$, $a'\neq b$ and $c$ is a common neighbour of $a'$ and $b$).  
  This contradicts the maximality of~$P$.
\end{proof}

 Let us define the \emph{type} $t(G)$ of some graph $G$.

\begin{itemize}
    \item If $G$ is disconnected or $|V(G)| = 1$, then $t(G)=1$.    
    \item If $t(G)\notin [1]$ and $G$ has a vertex separator or $G$ is $K_2$, then $t(G) = 2$.
    \item If $t(G)\notin [2]$ and $G$ has a clique separator or $G$ is a hole or a cube, then $t(G) = 3$.
    \item If $t(G)\notin [3]$ and $G$ has a proper 2-separator or $G$ is a wheel, then $t(G) = 4$. 
    \item If $t(G)\notin [4]$, then $t(G) = 5$.
\end{itemize}

\begin{lemma}
  \label{l:f-child}
  Let $G$ be a (theta, triangle, wac)-free graph. If $H'$ is a descendant of $H$ in $T(G)$, then $t(H') \geq t(H)$.
\end{lemma}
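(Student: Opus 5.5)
It suffices to treat the case where $H'$ is a child of $H$ in $T_G$; the general statement then follows by induction along the path from $H$ to $H'$. Note that every node of $T_G$ is an induced subgraph of $G$, since blocks of decomposition are induced subgraphs. Hence every node is (theta, triangle, wac)-free, and Lemmas~\ref{l:eligible2K1} and~\ref{l:eligibleP3} apply to it. The plan is a case analysis on which rule created the children of $H$. In each case we check that $H'$ cannot have a type smaller than $t(H)$.

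\textbf{Case 1: clique separator.} The children are the blocks $H[K\cup X_i]$, with $K$ a clique separator of minimum size. In every case $t(H)\le 3$.
\begin{itemize}
\item If $H$ is disconnected ($K=\emptyset$), then $t(H)=1$ and there is nothing to prove.
\item If $|K|=1$, then $t(H)=2$. Each block $H[K\cup X_i]$ is connected and has at least two vertices, so its type is at least~2.
\item If $|K|=2$, minimality of $|K|$ means $H$ has no cut vertex, so $t(H)=3$. We must show that a block $B=H[K\cup X]$ is connected, is not $K_2$, and has no cut vertex. It is connected and has more than two vertices since $X\neq\emptyset$. Suppose $v$ is a cut vertex of $B$. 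If $v\in X$, then the component of $B\sm v$ avoiding $K$ is also separated from the rest of $H$ by $v$. If $v\in K=\{u,v\}$, then $u$ and some part of $X$ are separated in $B$, so $v$ is a cut vertex of $H$. Both contradict $t(H)=3$.
\end{itemize}

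\textbf{Case 2: proper 2-separator.} Here $H$ is atomic and has a proper 2-separator, so $t(H)=4$. We must show that $H_X$ and $H_Y$ are connected, not $K_2$, have no cut vertex, have no clique separator, and are neither a hole nor a cube. Atomicity of $H_X$ and $H_Y$ is Lemma~\ref{l:decProper2sepAtomic}, and this rules out disconnectedness, cut vertices and clique separators. Each block has at least three vertices, so it is not $K_2$.
\begin{itemize}
\item $H_X$ is not a hole, because properness gives $X\cup\{a,b\}\neq V(P)$ for any $aXb$-path $P$, so $H_X$ is strictly larger than a hole through $Q$.
\item $H_X$ is not a cube, because in a cube every vertex has degree~3, whereas the internal vertices of the marker path $Q$ have degree~2. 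The marker path has length at least~2 since $a,b$ are non-adjacent, so such internal vertices exist.
\end{itemize}

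\textbf{Case 3: proper $P_3$-separator.} Here $H$ is superatomic, so it is not a wheel. By Lemma~\ref{l:decDaisy} applied to a wheel viewed as a daisy, a wheel has no proper $P_3$-separator, which confirms that $t(H)=5$. By Lemma~\ref{l:decProperP3Sep-2Sep}, $H_X$ and $H_Y$ are superatomic, so they have no clique separator and no proper 2-separator. Each contains a wheel centred at $c$ with rim $P\cup Q$, so neither is a hole. Neither is a cube, again because of the degree-2 internal vertices of the marker. It remains to show that neither block is a wheel.
\begin{itemize}
\item If $H_X$ were a wheel, its rim would have to be $aPbQ$-like and its centre $c$. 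Then $X$ would consist only of the vertices of an $aXb$-path together with nothing else, contradicting properness, which says $X\cup\{a,b\}$ is not just a path.
\item For $H_Y$: since $Y$ is tight and not a path, $H_Y$ has vertices outside the hole $P\cup Q$, so it cannot be the wheel with centre $c$. If $H_Y$ were a wheel with some other centre, then $c$ would lie on its rim. The marker $P$ contains no neighbour of $c$ internally. We need to rule out that $H_Y$ is a wheel of the form (hole through $acb$ and $P$, with another centre). Such a centre $d$ would be adjacent to $c$ or not. Here we would use Lemma~\ref{l:VattachW} (wac-freeness) and the degree-2 marker to derive a contradiction, or fall back on the fact that $c$ has neighbours in $Y$ of its own.
\end{itemize}

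I expect this last point, excluding that a block of a proper $P_3$-separator is a wheel, to be the main obstacle, since it is the only step not reducible to the atomicity and superatomicity lemmas already proved.
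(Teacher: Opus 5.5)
Your overall route is the paper's: reduce to a child, then run a case analysis. You organise it by the rule applied, the paper by $t(H)$, which amounts to the same thing. Both use Lemmas~\ref{l:decProper2sepAtomic} and~\ref{l:decProperP3Sep-2Sep} for (super)atomicity, the degree-2 marker vertices against the cube, and properness against holes and wheels. Cases~1 and~2 are fine; you even exclude the cube in Case~2, which the paper's proof omits.

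The gap is the final step of Case~3: you do not prove that neither block is a wheel. For $H_Y$ you leave it open with only a vague plan. For $H_X$, the assertion ``its rim would have to be $aPbQ$-like and its centre $c$'' is exactly the unproved fact that would also settle $H_Y$, so the $H_X$ case is not done either.

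The missing fact is elementary and needs neither wac-freeness nor Lemma~\ref{l:VattachW}: a triangle-free wheel $(R,d)$ contains no induced wheel other than itself. To see this, classify the holes of $(R,d)$:
\begin{itemize}
\item A hole avoiding $d$ lies in $R$, hence equals $R$.
\item A hole through $d$ is $dSd$ for some sector $S$. No vertex of $R\sm S$ has three neighbours on $dSd$: its only possible neighbours there are the two ends of $S$ and $d$, and being adjacent to $d$ and to an end of $S$ gives a triangle.
\end{itemize}
So an induced wheel must have rim $R$ and centre $d$, i.e.\ it is the whole wheel.

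As you observed, both $H_X$ and $H_Y$ contain the wheel $(P\cup Q,c)$ as an induced subgraph. Hence a block that is a wheel equals $G[P\cup Q\cup\{c\}]$. For $H_X$ this forces $X$ to be the interior of $P$; for $H_Y$ it forces $Y$ to be the interior of $Q$. Either way properness is contradicted.

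Two minor slips. First, ``$H$ is superatomic, so it is not a wheel'' is a non sequitur, since wheels are superatomic; the reason you give next (wheels have no proper $P_3$-separator) is the right one. Second, in $H_X$ the marker $Q$ may have internal vertices of degree~$3$. To exclude the cube, note that the neighbour of $a$ on $Q$ is not adjacent to $c$ (triangle-freeness), so it has degree~$2$.
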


\begin{proof}
  It is enough to prove the property when $H'$ is a child of $H$ (the rest follows by induction). 
  
  If $t(H) = 1$, then clearly $t(H')\geq t(H)$. 
 
  If $t(H) = 2$, then $H$ has a vertex separator ($G = K_2$ is impossible, since $H$ is not a leaf). Hence $H'$ is clearly connected and contains at least two vertices, so $t(H') \geq 2$. 
 
  If $t(H) = 3$, then $H$ has a $K_2$ separator $K$ (since $H$ is not a leaf). Hence $t(H') \geq 3$. 
  Indeed, the node $H'$ cannot be a clique because it properly contains $K$. 
  Also, a vertex separator of $H'$ would be a vertex separator of $H$, so $H'$ has no vertex separator.  
 
  If $t(H) = 4$, then $H$ has a proper 2-separator $\{a, b\}$ (since $H$ is not a leaf). By Lemma~\ref{l:decProper2sepAtomic}, $H'$ is atomic. 
  Since $a, b \in H'$, $H'$ cannot be a clique.  
  Also, $H'$ cannot be a hole because $\{a, b\}$ is proper. Hence $t(H') \geq 4$. 
 
  If $t(H) = 5$, then $H$ has a proper $P_3$-separator $acb$ (since $H$ is not a leaf). By Lemma~\ref{l:decProperP3Sep-2Sep}, $H'$ is superatomic. Since $a, b \in H'$, $H'$ cannot be a clique.  Obviously, $H'$ cannot be a cube (because the marker path has vertices of degree~2).
  Also, $H'$ cannot be a hole or a wheel because $acb$ is proper (a block being a hole or a wheel would imply that some side is a path). 
  Hence $t(H') \geq 5$.
\end{proof}

A \emph{chunk} of a graph is either an ear, or a vertex of degree at least~3 not contained in an ear. Observe that a daisy with $\ell\geq 1$ petals contains $2\ell + 2$ chunks: $\ell$ of them are the petals (that are ears, see Figure~\ref{fig:earPetal}), the rest are vertices of degree~3 or~4 of the base hole.  We define the following functions of some graph $G$.

\begin{figure}[hbt]
    \centering
    \includegraphics[width=4cm]{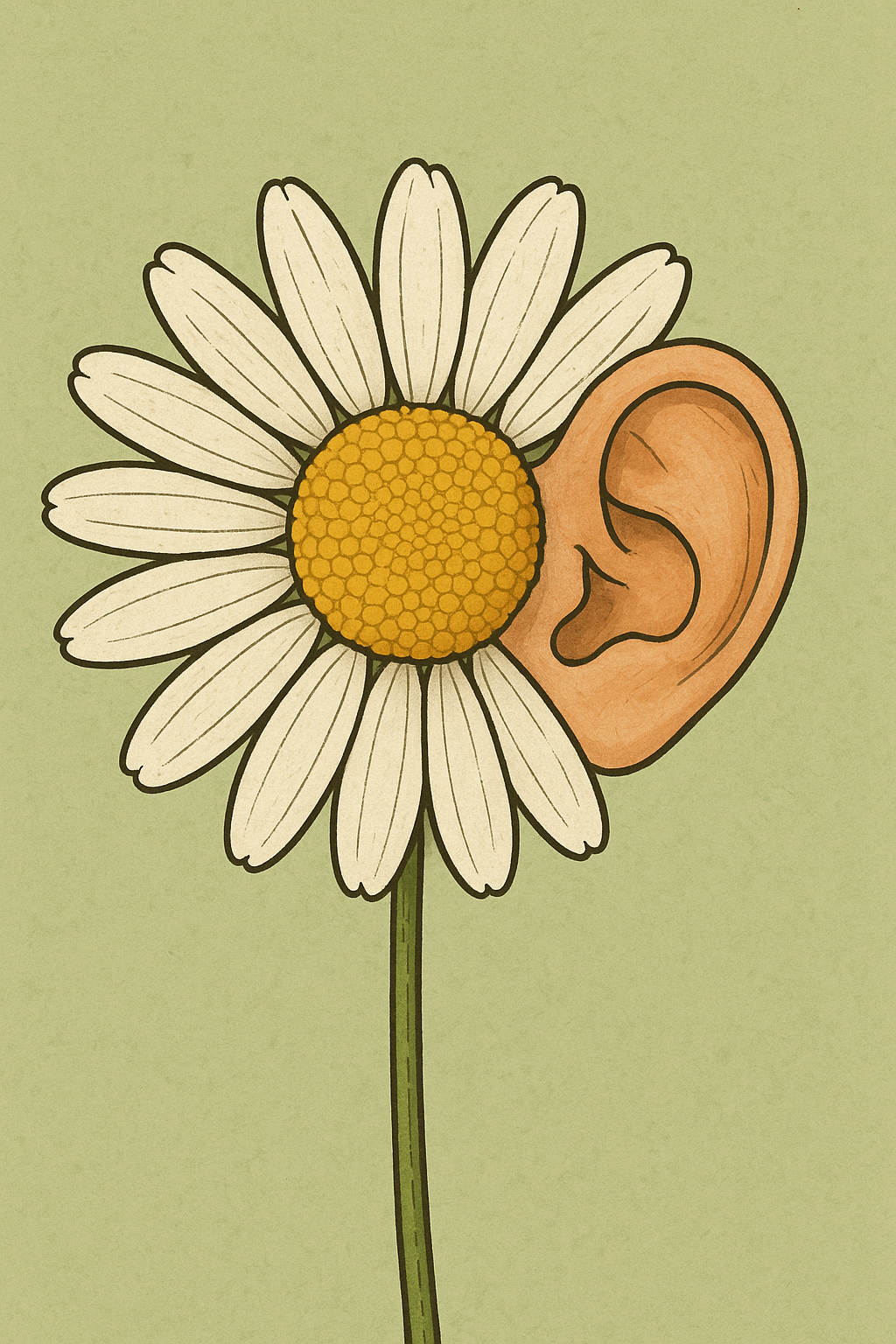}
    \caption{Should daisies have ears or petals?}
    \label{fig:earPetal}
\end{figure}

\begin{itemize}
    \item $f_1(G) = |V(G)|$
    \item $f_2(G) = |V(G)| - 1$
    \item $f_3(G) = |V(G)| - 2$
    \item $f_4(G) = |\{v\in V(G) : \text{ $v$ has degree at least 3}\}| - 3$
    \item $f_5(G) = |\{X \subseteq V(G) : \text{ $X$ is a chunk of $G$}\}| - 5$
    \item $f(G) = f_{t(G)}(G)$.  
\end{itemize}

\begin{lemma}
  \label{l:f-monotonous}
  For every graph $G$, $$|V(G)| \geq f_1(G) \geq f_2(G) \geq f_3(G) \geq f_4(G).$$

  Moreover, if $G$ is atomic and distinct from a wheel, then $f_4(G) \geq f_5(G)$.
\end{lemma}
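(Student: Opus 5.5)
The first chain of inequalities is immediate from the definitions. Clearly $|V(G)| = f_1(G) > f_2(G) > f_3(G)$. For $f_3(G) \geq f_4(G)$, I will use that the number of vertices of degree at least~3 is at most $|V(G)|$, so
\[
f_4(G) \leq |V(G)| - 3 < |V(G)| - 2 = f_3(G).
\]
No structural input is needed here.

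The real content is $f_4(G) \geq f_5(G)$ for $G$ atomic and not a wheel. Let $d$ be the number of vertices of degree at least~3 and $h$ the number of chunks. The goal is $h \leq d + 2$; in fact I expect to prove $h \leq d$. The plan is to build an injection from chunks into vertices of degree at least~3.

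A chunk that is a vertex of degree at least~3 not contained in an ear is mapped to itself. For an ear $P = x \dots y$, by definition at least one internal vertex of $P$ is adjacent to $c$ and has degree~3, so I map $P$ to such a vertex. By Lemma~\ref{l:earDisjoint}, distinct ears are disjoint, since $G$ is atomic and not a wheel. Hence the images of distinct ears are distinct vertices. These images lie inside ears, so they differ from the singleton chunks, which by definition are not contained in any ear. The map is therefore injective, giving $h \leq d$, and so
\[
f_5(G) = h - 5 \leq d - 5 < d - 3 = f_4(G).
\]

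The only delicate point is that an ear genuinely contains a vertex of degree at least~3. This follows from the definition, which requires an internal vertex adjacent to $c$ of degree~3; degree~2 is impossible for it, since it also has two neighbours along $P$. The other point is that chunks are well defined as disjoint objects, which is exactly where Lemma~\ref{l:earDisjoint}, and hence the hypotheses atomic and not a wheel, is used. I expect no serious obstacle beyond checking these definitional details.
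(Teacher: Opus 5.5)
Your proof is correct and matches the paper's argument. The paper likewise notes that, by Lemma~\ref{l:earDisjoint}, chunks are pairwise disjoint and each contains a vertex of degree at least~3, which is exactly your injection. Your degree-3 claim implicitly assumes $c\notin P$; this holds because $c$ is adjacent to the two distinct vertices $a,b\notin P$, which is incompatible with the definition of an ear.
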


\begin{proof}
  The first inequalities are clear. 
  The last inequality comes from the fact that under our assumptions, the chunks of $G$ are disjoint by Lemma~\ref{l:earDisjoint} and each chunk contains a vertex of degree at least~3.
\end{proof}

\begin{lemma}
  \label{l:share}
  If $G$ is a (theta, triangle, wac)-free graph, then every node $H$ of $T_G$ satisfies  the following. 
  \begin{itemize}
    \item If $H$ is a leaf of $T_G$, then $f(H) \geq 1$. 
    \item If $H$ is has children $H_1$, \dots, $H_k$ in $T_G$, then $$f(H) \geq f(H_1) + \cdots + f(H_k).$$ 
  \end{itemize}
\end{lemma}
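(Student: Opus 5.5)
The plan is to prove both items together, by a case analysis on the type $t(H)$. By Lemma~\ref{l:decLeaves}, and since every node of $T_G$ is an induced subgraph of $G$, all nodes are (theta, triangle, wac)-free and all leaves are basic.

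\textbf{Leaves.} We compute $f$ for each kind of basic graph.
\begin{itemize}
\item $K_1$ has type~1 and $f=1$.
\item $K_2$ has type~2 and $f=1$.
\item The cube and holes have type~3, and $f_3=|V|-2\geq 2$.
\item A wheel has type~4. Its centre and at least three rim vertices have degree at least~3, so $f_4\geq 1$.
\item A daisy with $\ell\geq 2$ petals is atomic, is not a wheel, and has no proper 2-separator (Lemma~\ref{l:decDaisy}), so it has type~5. By the remark on chunks, it has $2\ell+2\geq 6$ chunks, so $f_5\geq 1$.
\end{itemize}

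\textbf{Reduction for internal nodes.} Let $H$ have children $H_1,\dots,H_k$. By Lemma~\ref{l:f-child}, $t(H_i)\geq t(H)$. By Lemma~\ref{l:f-monotonous}, this gives $f(H_i)=f_{t(H_i)}(H_i)\leq f_{t(H)}(H_i)$. The second inequality of Lemma~\ref{l:f-monotonous} is needed only when $t(H_i)=5$, and then $H_i$ is atomic and not a wheel. Hence it suffices to prove
\[
f_{t(H)}(H)\geq \sum_i f_{t(H)}(H_i).
\]

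\textbf{Types 1--4.} These are straightforward counts.
\begin{itemize}
\item Type~1: $H$ is disconnected, the minimal clique separator is empty, and the children are the components, so vertex counts add up exactly.
\item Type~2: the minimal clique separator is a single vertex shared by all $k\geq 2$ blocks, so $\sum(|V(H_i)|-1)=|V(H)|-1$.
\item Type~3: the separator is an edge, and similarly $\sum(|V(H_i)|-2)=|V(H)|-2$.
\item Type~4: there is a proper 2-separator $\{a,b\}$. Internal vertices of the marker paths have degree~2 in the blocks, and vertices of $X$ (resp.\ $Y$) have the same degree in $H_X$ (resp.\ $H_Y$) as in $H$. Degrees of $a$ and $b$ only drop in the blocks. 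So only $a$ and $b$ can be counted twice, and the number of degree-$\geq 3$ vertices of $H_X$ plus that of $H_Y$ is at most that of $H$ plus~2. This gives $f_4(H_X)+f_4(H_Y)\leq f_4(H)+2-3<f_4(H)$.
\end{itemize}

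\textbf{Type 5, the main obstacle.} Here $H$ is superatomic and has a proper $P_3$-separator with split $(X,acb,Y)$. The goal is to show that the number of chunks of $H_X$ plus the number of chunks of $H_Y$ is at most the number of chunks of $H$ plus~5. I would classify the chunks of each block.
\begin{itemize}
\item Chunks meeting $\{a,c,b\}$: at most three per block, so at most three are double-counted.
\item Chunks meeting the marker path: the marker $P$ in $H_Y$ has no internal neighbour of $c$ and internal degree~2, so it creates no chunk. The marker $Q$ in $H_X$ may contain neighbours of $c$ and so may lie in one ear, giving one extra chunk.
\item All remaining chunks lie in $X$ or in $Y$. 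I would show these are chunks of $H$, and that the map to chunks of $H$ is injective.
\end{itemize}
This bounds the surplus by $3+1\leq 5$, which gives $f_5(H_X)+f_5(H_Y)\leq f_5(H)$.

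The delicate point is the last bullet. An ear of a block avoiding $\{a,c,b\}$ and the marker must remain an ear of $H$, and in particular must stay maximal. Degrees are unchanged away from $acb$, but an ear could extend through $a$ or $b$ into the other side. I would handle this by noting that such an extension forces the ear to contain a vertex of $\{a,c,b\}$, and such chunks are already accounted for in the first bullet. I would also use Lemma~\ref{l:earDisjoint}, which says ears in atomic non-wheel graphs are disjoint, so that chunks are disjoint and the count is well defined.
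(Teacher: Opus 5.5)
Your leaf computations, the reduction to $f_{t(H)}(H)\ge\sum_i f_{t(H)}(H_i)$, and types 1--4 agree with the paper. Your uniform ``at most $+2$'' count in type~4 is correct and even gives a strict inequality.

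The gap is in type~5, at the step you flag as delicate. You need every chunk $A$ of $H_X$ with $A\subseteq X$ (and likewise for $H_Y$ and $Y$) to be a chunk of $H$. Your remedy is that an extension of $A$ in $H$ ``forces the ear to contain a vertex of $\{a,c,b\}$, and such chunks are already accounted for in the first bullet''. This does not repair the count. $A$ avoids $\{a,c,b\}$, so it is counted in your third bullet, whereas the ear $B$ of $H$ that would absorb it meets $\{a,c,b\}$. Then $A$ has no counterpart among the chunks of $H$ lying in $X$. Meanwhile $B$ is charged twice: once as the image of $A$, and once in your ``at most three are double-counted'' comparison. Nothing in your slack bounds how many such absorptions occur. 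What must be shown is that this never happens. The missing idea is to reroute $B$ through the marker path inside the block.

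Here is how that goes. Since $H$ is atomic and $acb$ is proper, $c$ is adjacent to $a$, $b$ and to vertices of both $X$ and $Y$. So $\deg_H(c)\ge 4$, and $c$ lies in no ear.
\begin{itemize}
\item If $B\subseteq X$, then $B$ satisfies the ear conditions in $H_X$ (degrees in $X$ are unchanged), contradicting the maximality of $A$.
\item Otherwise, up to symmetry, $a\in B$. Then $a$ is an internal vertex of $B$ of degree exactly~3, with neighbours $c$, $x_a\in X$ and $y_a\in Y$, and $c$ is the common neighbour attached to $B$.
\item By properness, $b\notin B$ and the end of $B$ in $X$ is not attached to $b$. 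Otherwise a piece of $B$ would be a whole component of $H\setminus acb$ inducing, with $a$ and $b$, an $ab$-path.
\item Let $Q^\circ$ be the interior of $Q$; its ends are non-adjacent to $c$ by triangle-freeness. Then $(B\cap(X\cup\{a\}))\cup Q^\circ$ satisfies all ear conditions in $H_X$, since $a$ has degree~3 there with neighbours $c$, $x_a$, $q_1$. It strictly contains $A$, contradicting maximality.
\item The same works in $H_Y$ using $P^\circ$, and it also covers singleton chunks.
\end{itemize}
This is exactly the paper's argument.

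A secondary omission: your ``at most three are double-counted'' silently assumes that $a$, $b$ and $c$ lie in chunks of $H$, and in distinct ones. The paper proves this: $c$ is a singleton chunk, and by properness no chunk contains both $a$ and $b$. With your slack you need at least two chunks of $H$ meeting $\{a,c,b\}$. This follows from $\deg_H(c)\ge 4$ and $\deg_H(a)\ge 3$, but it has to be stated.
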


\begin{proof}
  Suppose first that $H$ is a leaf of $T_G$. By Lemma~\ref{l:decLeaves}, $H$ is basic. If $H=K_1$, then $t(H) = f(H) = f_1(H) = 1$.  If $H=K_2$, then $t(H)=2$ and $f(H) = f_2(H) = 1$. If $H$ is a hole or the cube, then $t(H) = 3$ and $f(H) = f_3(H) \geq 2$.  If $H$ is wheel, then $t(H) = 4$ and $f(H) = f_4(H)$. Since $H$ contains at least four vertices of degree at least~3, $f(H) \geq 1$. If $H$ is none of the graphs already addressed, then $H$ is a daisy with $\ell\geq 2$ ears, so it contains $2\ell + 2$ chunks.  Hence, $t(H) = 5$ and $f(H) = f_5(H) = 2\ell + 2 -5 \geq 1$.  In all cases, $f(H) \geq 1$. 
  
  Suppose now that $H$ is not a leaf of $T_G$.  In particular, $H$ is not a wheel. By Lemmas~\ref{l:f-child} and~\ref{l:f-monotonous}, it is enough to check that $$f_{t(H)}(H) \geq f_{t(H)}(H_1) + \cdots + f_{t(H)}(H_k).$$ 
  
  If $t(H)=1$, then $H$ is disconnected and its children in $T_G$ are its connected components, so $$f_1(H) = |V(H)| = |V(H_1)| + \cdots + |V(H_k)| = f_1(H_1) + \cdots + f_1(H_k).$$
  
  If $t(H)=2$ or $t(H)=3$, the proof is similar. Note that the -1 or -2 in the definition of $f$ avoids double counting the (1 or 2) vertices that are in all blocks of decomposition. 
   
  If $t(H)=4$, then let $(X, \{a, b\}, Y)$ a split with respect to a proper 2-separator of $G$.  Let $x$ be the number of vertices of $X$ that have degree at least~3 (in $G$ and equivalently in $G_X$). Let $y$ be defined similarly for $Y$. 
  If both $a$ and $b$ have degree~2 in $G$ (and therefore in $G_X$ and $G_Y$), we have $$f_4(G) = x+y -3 \geq (x-3) + (y-3) = f_4(G_X) + f_4(G_Y).$$ So, we may assume that at least one of $a$ or $b$ has degree at least~3 in $G$. 
  Hence we have $$f_4(G) \geq x+y-2 = (x-1) + (y-1) \geq f_4(G_X) + f_4(G_Y).$$
  
  If $t(H)=5$, then let $(X, acb, Y)$ be a split with respect to a proper $P_3$-separator of $G$.  Since $acb$ is proper, $c$ has degree at least~4 in $G$. So, $c$ is a chunk of $G$. Also, no chunk of $G$ contains both $a$ and $b$, because otherwise such a chunk $A$ would contain an $aXb$-path or an $aYb$-path  that would be a component of $G \sm acb$ (because of the degree condition in the definition of an ear), a contradiction to $acb$ being proper. 
  
  We claim that every chunk of $G_X$ that is included in $X$ is a chunk of~$G$.  Otherwise, consider a  counter-example $A$. Since $A$ is not a chunk of $G$, there exists an ear $B$ of $G$ such that $A \subsetneq B$. As already noted, we have $c\notin B$ since $c$ is a chunk of $G$. We have $a\in B$ or $b\in B$ (say $a\in B$ up to symmetry) for otherwise, $B\subseteq X$, so $B$ is an ear of $G_X$, a contradiction to Lemma~\ref{l:earDisjoint}.  So, $(B\sm Y) \cup Q$ is contained in an ear of $G_X$, a contradiction to $A$ being a chunk of $G_X$.  We proceed similarly with $G_Y$. 
  
  Let $x$  be the number of chunks of $G_X$ that are included in $X$. Let $y$ be defined similarly with  $Y$. 
  We have $f_5(G) \geq x+y+3-5 = x + y - 2$, because $a$, $c$ and $b$ are in distinct chunks of $G$ that are neither included in $X$ nor $Y$.  

  In $G_X$, the interior of the marker path $Q$ is included in one chunk of $G_X$ (because it is contained in some ear). So, the number of chunks in $G_X$ is at most $x+4$ (because of $Q$, $a$, $b$ and $c$) and $f_5(G_X) \leq x+4 - 5 = x - 1$.  
  A similar inequality holds for $G_Y$, because the marker path of $G_Y$ is included in at most one chunk. 
  So, $$f_5(G) \geq x + y - 2 \geq (x - 1) + (y-1) \geq f_5(G_X) + f_5(G_Y).$$
\end{proof}

\begin{lemma}
  \label{l:TGnNodes}
  If $G$ is (theta, triangle, wac)-free, then $T_G$ has at most $2|V(G)| - 1$ nodes.
\end{lemma}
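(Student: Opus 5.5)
The plan is to combine Lemma~\ref{l:share} with a counting argument on the tree $T_G$. By Lemma~\ref{l:share}, every leaf $L$ of $T_G$ satisfies $f(L)\ge 1$. At every internal node $H$ with children $H_1,\dots,H_k$ we have $f(H)\ge f(H_1)+\dots+f(H_k)$. A straightforward induction from the leaves upward then shows that for every node $H$, $f(H)$ is at least the number of leaves of the subtree of $T_G$ rooted at $H$. Applying this at the root gives that the number of leaves of $T_G$ is at most $f(G)$. By Lemma~\ref{l:f-monotonous}, $f(G)=f_{t(G)}(G)\le |V(G)|$; the case $t(G)=5$ needs $G$ atomic and distinct from a wheel, which holds because $t(G)=5$ excludes clique separators and wheels by definition. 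Hence $T_G$ has at most $|V(G)|$ leaves.

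Next, I would bound the internal nodes in terms of the leaves. Every internal node of $T_G$ has at least two children. For a clique separator, $H\sm K$ has at least two components, so there are at least two blocks. For proper 2-separators and proper $P_3$-separators, there are exactly two blocks $H_X$ and $H_Y$. A rooted tree in which every internal node has at least two children and which has $\ell$ leaves has at most $\ell-1$ internal nodes. This follows from the edge count: the number of edges is $n-1$ and is also at least $2(n-\ell)$, so $n\le 2\ell-1$. Therefore $T_G$ has at most $2|V(G)|-1$ nodes.

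The only point needing care is the induction step ``$f(H)\ge$ number of leaves below $H$'', which depends on the inequalities of Lemma~\ref{l:share} holding at every node of $T_G$, including intermediate nodes that need not be atomic. Lemma~\ref{l:share} is stated for every node of $T_G$ when $G$ is (theta, triangle, wac)-free, and every node is an induced subgraph of $G$ (blocks of decomposition are induced subgraphs), so it applies throughout. I therefore expect no real obstacle. The main thing to double-check is the edge case of a root that is itself a leaf: then $T_G$ has one node, and $1\le 2|V(G)|-1$ holds trivially.
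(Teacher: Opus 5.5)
Your proposal is correct and follows the paper's proof: Lemma~\ref{l:share} gives at most $f(G)\le |V(G)|$ leaves, and since every internal node of $T_G$ has at least two children, there are at most $2|V(G)|-1$ nodes. You add two details the paper leaves implicit: that $t(G)=5$ makes $G$ atomic and not a wheel, so Lemma~\ref{l:f-monotonous} applies, and the explicit edge count for trees whose internal nodes have at least two children.
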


\begin{proof}
  By Lemma~\ref{l:share}, $T_G$ has at most $f(G)$ leaves, so at most $|V(G)|$ leaves. It follows that it has at most $2|V(G)| - 1$ nodes. 
\end{proof}

\begin{theo}
  \label{th:algo}
  There exists an algorithm that decides in time $O(|V(G)|^4|E(G)|)$ whether an input graph $G$ is (theta, triangle, wac)-free (resp.\ (theta, triangle, even wheel)-free, (even hole, triangle)-free,  bipartite (theta, wac)-free). Moreover, when it is, the algorithm outputs a tree decomposition of $G$ of width $\tw(G)$ (in particular, of width at most~4).
\end{theo}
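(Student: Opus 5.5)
The algorithm builds the decomposition tree $T_G$ top-down and then checks the leaves. It starts with the root $G$. For every current node $H$ it runs the algorithm of Lemma~\ref{l:algDec}.

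If a triangle is found, it stops and answers ``no''. Otherwise it picks the separator prescribed by the rules defining $T_G$ and creates the children $H$ would have in $T_G$.
\begin{itemize}
\item For a clique separator, one of minimum size is used. Lemma~\ref{l:algDec} enumerates all sets of at most two vertices, so the minimum is easy to obtain.
\item For a proper 2-separator or a proper $P_3$-separator, the blocks need a marker path. The path $Q$ is any shortest $aYb$-path. The path $P$ is any shortest $aXb$-path, taken in $X$ with the neighbours of $c$ removed in the $P_3$ case. Each is found by BFS in linear time.
\end{itemize}
If no separator exists, $H$ is a leaf. For each leaf the algorithm tests whether it is a basic graph of the relevant class: $K_1$, $K_2$, the cube, or a daisy. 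For the subclasses it uses the parity conditions of Lemma~\ref{l:daisyEHF}.

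Recognising a daisy takes polynomial (in fact near-linear) time. The vertices of degree at least~3 must lie on the base hole. Petals are maximal paths of degree-2 vertices, plus attachments to a single centre. One reconstructs the candidate hole $C$, checks that every remaining component is a petal with the prescribed attachments, and checks that the centres form a subpath of $C$. By Lemma~\ref{l:decLeaves}, $G$ is in the class if and only if all leaves pass this test.

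The key point for the running time is that the tree cannot be too large. If $G$ is in the class, Lemma~\ref{l:TGnNodes} says $T_G$ has at most $2|V(G)|-1$ nodes. Every node has at most $|V(G)|$ vertices and at most $|E(G)|$ edges: its edges are edges of $G$, and marker paths only replace part of a side. So each call to Lemma~\ref{l:algDec} costs $O(|V(G)|^3|E(G)|)$, and over $O(|V(G)|)$ nodes the total is $O(|V(G)|^4|E(G)|)$.

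If $G$ is not in the class, Lemma~\ref{l:TGnNodes} gives no bound, so the algorithm counts the nodes it creates. Once the count exceeds $2|V(G)|-1$ it stops and answers ``no'', which is correct by the contrapositive of Lemma~\ref{l:TGnNodes}. This counter is the main subtlety, because the size bound only holds inside the class. For a $P_3$-separator, choosing a split $(X,acb,Y)$ with $X$ loose means testing looseness, which Lemma~\ref{l:algDec} already does.

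For the tree decomposition, the algorithm first computes optimal decompositions of the leaves:
\begin{itemize}
\item the explicit one for the cube from Lemma~\ref{l:TWcube};
\item the explicit ones for daisies from Lemma~\ref{l:TWdaisy};
\item trivial ones for $K_1$ and $K_2$.
\end{itemize}
It then glues these bottom-up as in the proof of Lemma~\ref{l:decTW}. Marker vertices are replaced by $a$ in the bags. A bag containing the separator is located, using Lemma~\ref{l:helly} for $acb$ or for a clique, and the two trees are joined by an edge at such bags.

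Lemma~\ref{l:decTW} guarantees that the width never increases beyond the maximum over the children, so the result has width exactly $\tw(G)$, which is at most~4 by Lemma~\ref{l:tw}. Each gluing step is linear in the size of the bags, so this phase is dominated by the decomposition phase.
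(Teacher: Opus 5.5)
Your proof follows the paper's. You build $T_G$ node by node with Lemma~\ref{l:algDec}, abort once $2|V(G)|$ nodes exist (correct by Lemma~\ref{l:TGnNodes}), and test the leaves against the basic graphs of the relevant class (Lemma~\ref{l:decLeaves}). You then assemble a tree decomposition of width $\tw(G)$ from Lemmas~\ref{l:TWcube} and~\ref{l:TWdaisy} and the constructive proof of Lemma~\ref{l:decTW}, and your running-time accounting is the same as the paper's.

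One auxiliary claim in your daisy-recognition sketch is false: the vertices of degree at least~3 do not all lie on the base hole, and petals are not paths of degree-2 vertices. By definition, every petal $P_i$ has an internal vertex adjacent to its centre $c_i$, and such a vertex has degree~3. For a wheel viewed as a daisy, these are the rim neighbours of the centre outside the chosen sector. So reconstructing $C$ from the vertices of degree at least~3 fails on every daisy that has a petal.

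The repair is easy. Handle holes and wheels directly. Otherwise peel off the petals, for instance as the ears of Section~\ref{ssec:algo}, which are pairwise disjoint by Lemma~\ref{l:earDisjoint}. Then check that what remains is a hole to which these paths attach as petals, as the definition of a daisy requires. The paper itself only asserts that this leaf test is easy. Any test running in $O(|V(G)|^3|E(G)|)$ per leaf keeps the overall bound, since there are at most $2|V(G)|$ nodes.
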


\begin{proof}
  The algorithm builds a decomposition tree, but stops the process whenever $2|V(G)|$ nodes are constructed.  This takes time at most $O(|V(G)|^4|E(G)|)$ by Lemma~\ref{l:algDec}.  
  
  If exactly $2|V(G)|$ nodes are constructed, the algorithm outputs that $G$ is not (theta, triangle, wac)-free and stops, which is correct by Lemma~\ref{l:TGnNodes}. 
  
  If at least one leaf $H$ of $T_G$ is not basic (which is easy to test in time $O(|V(H)|^3)$), the algorithm outputs that $G$ is not (theta, triangle, wac)-free and stops, which is correct by Lemma~\ref{l:decLeaves}. 

  Otherwise, the algorithm outputs that $G$ is (theta, triangle, wac)-free.  By Lemmas~\ref{l:decClique}, \ref{l:decProper2sep}, \ref{l:decProperP3Sep}, a further analysis of the leaves can determine whether $G$ is (theta, triangle, even wheel)-free, (even hole, triangle)-free,  or bipartite (theta, wac)-free. 
  Also, the proofs of Lemmas~\ref{l:decTW}, \ref{l:TWdaisy} and \ref{l:TWcube} shows how to actually compute from $T_G$ a tree representation of $G$ of width $\tw(G)$. 
\end{proof}

\subsection{Structure theorem}
\label{ssec:struct}

We insist that Theorem~\ref{th:Decomp} is not just a decomposition theorem but a structure theorem (though there is no formal definition of these two notions).  
A structure theorem should allow building all graphs in some class (and only them) starting from basic blocks by gluing previously constructed graphs along some simple operations. 
So, the basic graphs need no further explanation. 
It is a routine matter to show how the three kinds of separator that we use can be reversed into operations.  We do it below (it is a bit lengthy because of the "proper" conditions in the $P_3$-separator).

If $k\geq 2$ disjoint graphs $G_1$, \dots, $G_k$  are such that each $G_i$ contains a clique $K_i$ on $\ell \leq 2$ vertices, the graph $G$ obtained from the $G_i$'s by gluing them at $K_i$ is said to be \emph{obtained from the $G_i$'s by gluing along a clique} (if $\ell=0$, $G$ is just the disjoint union of the $G_i$'s). 

Let $G_X$ and $G_Y$ be two disjoint graphs. Suppose that $G_X$ contains an $a_Xb_X$-path $Q$ of length at least~2, and $G_Y$ contains an  $a_Yb_Y$-path $P$ of length at least~2.  
Suppose that the internal vertices of $P$ and $Q$ all have degree~2. Let $G$ be obtained from $G_X$ and $G_Y$ by identifying $a_X$ and $a_Y$, identifying $b_X$ and $b_Y$, and deleting the interior of $P$ and $Q$.  
Then $G$ is said to be \emph{obtained from $G_X$ and $G_Y$ by gluing along a proper 2-separator}. 

Let $G_X$ and $G_Y$ be two disjoint graphs. Suppose that $G_X$ contains a path $a_Xc_Xb_X$, an  $a_Xb_X$-path $Q$ of length at least~2, not containing $c_X$ and whose interior contains at least one neighbour of $c_X$, and an $a_Xb_X$-path not containing $c_X$ or a neighbour of $c_X$.  
Suppose that the internal vertices of  $Q$ all have degree~2, or have degree~3 and are adjacent to $c_X$.  
Suppose that $G_Y$ contains a path $a_Yc_Yb_Y$, an  $a_Yb_Y$-path $P$ of length at least~2 not containing $c_Y$ or a neighbour of $c_Y$, and every $a_Yb_Y$-path in $G_Y$ apart from $P$ and $a_Yc_Yb_Y$  contains a neighbour of $c_Y$.  
Suppose that the internal vertices of $P$ all have degree~2.  
Let $G$ be obtained from $G_X$ and $G_Y$ by identifying $a_X$ and $a_Y$, identifying $c_X$ and $c_Y$,  identifying $b_X$ and $b_Y$, and deleting the interior of $P$ and $Q$.  
Then $G$ is said to be \emph{obtained from $G_X$ and $G_Y$ by gluing along a proper $P_3$-separator}.

\begin{theo}
  \label{th:struct}
  A graph is (theta, triangle, wac)-free graph if and only if it can be obtained from basic graphs by repeatedly gluing along cliques, proper 2-separators and proper $P_3$-separators.
\end{theo}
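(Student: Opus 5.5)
The plan is to prove the two directions separately, each by induction on the number of vertices.

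\emph{Only if.} Let $G$ be (theta, triangle, wac)-free. If $G$ is basic, there is nothing to prove. Otherwise Theorem~\ref{th:Decomp} gives a clique separator, a proper 2-separator or a proper $P_3$-separator, and we take the blocks of decomposition defined in Section~\ref{sec:separator}.

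\begin{itemize}
\item The blocks are induced subgraphs of $G$, hence (theta, triangle, wac)-free.
\item For the 2-separator and the $P_3$-separator, the blocks have fewer vertices than $G$ because the separator is proper. For a clique separator this is immediate.
\item By induction, each block is obtained from basic graphs by the gluing operations.
\end{itemize}

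It remains to check that $G$ is recovered from its blocks by exactly the corresponding gluing operation, with the marker paths satisfying the degree and path hypotheses in the definitions.

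\begin{itemize}
\item \textbf{Clique.} This is immediate.
\item \textbf{Proper 2-separator.} The marker paths $Q$ in $G_X$ and $P$ in $G_Y$ have length at least~2, since $a$ and $b$ are non-adjacent. Their internal vertices have degree~2 in the blocks.
\item \textbf{Proper $P_3$-separator.}
  \begin{itemize}
  \item In $G_X$: $Q$ is an $aYb$-path, so it contains a neighbour of $c$ because $Y$ is tight. Its internal vertices have degree~2, or degree~3 when adjacent to $c$. The loose side $X$ supplies an $ab$-path avoiding $c$ and its neighbours.
  \item In $G_Y$: $P$ has no internal neighbour of $c$ and its internal vertices have degree~2. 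Every other $ab$-path of $G_Y$ other than $acb$ lies in $Y\cup\{a,b\}$, so it contains a neighbour of $c$ by tightness.
  \end{itemize}
  So this is precisely the gluing along a proper $P_3$-separator.
\end{itemize}

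\emph{If.} Basic graphs are (theta, triangle, wac)-free, as noted in Section~\ref{sec:basic} for daisies. This is immediate for $K_1$ and $K_2$, and for the cube it is a direct check. For a gluing, we want to show that the glued graph $G$ has the corresponding separator and that the given $G_X$, $G_Y$ (or the $G_i$) are blocks of decomposition of $G$ with respect to it. Then Lemmas~\ref{l:decClique}, \ref{l:decProper2sep} and~\ref{l:decProperP3Sep} give that $G$ is (theta, triangle, wac)-free whenever the pieces are.

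\begin{itemize}
\item \textbf{Clique.} This is clear. Gluing does not create triangles because $\ell\le 2$, and in any case Lemma~\ref{l:decClique} applies.
\item \textbf{2-separator.} Deleting the interiors of the degree-2 marker paths leaves $X=G_X\sm(Q\cup\{a,b\})$ and $Y$ separated by $\{a,b\}$.
  \begin{itemize}
  \item To make the separator proper, I would use the fact that the induction hypothesis lets me assume $G_X$ is connected and has the path $Q$. Then $X\cup\{a,b\}$ contains an $ab$-path: since $G_X$ is (theta, triangle, wac)-free, I would argue $G_X$ is not just the hole formed by $Q$ and a single path.
  \item This is the delicate point, since the definition only guarantees $Q$. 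I expect to interpret the definition so that each side $X$ contains an $ab$-path and is not reduced to one. If the operation as literally stated allows degenerate cases (for instance $X$ empty, or $a$ and $b$ disconnected in $G_X\sm Q$), I would check that the resulting $G$ is still (theta, triangle, wac)-free directly. Any theta, triangle or wac of $G$ meeting both sides must cross through $\{a,b\}$, and replacing one side by the marker path gives the same structure in a block, as in the proof of Lemma~\ref{l:decProper2sep}.
  \end{itemize}
\item \textbf{$P_3$-separator.}
  \begin{itemize}
  \item The required $ab$-path avoiding $c$ and its neighbours in $G_X$ cannot use $Q$, since $Q$ contains a neighbour of $c$. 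So it lies in $X\cup\{a,b\}$, which makes $X$ loose.
  \item The hypothesis on $G_Y$ makes $Y$ tight.
  \item $c$ has neighbours on both sides because of $Q$ (its neighbours there have degree~3 and are transferred) and because of the tightness paths.
  \end{itemize}
  Then the replacement argument of Lemma~\ref{l:decProperP3Sep} applies verbatim, and it uses only that $P$ has no internal neighbour of $c$.
\end{itemize}

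\textbf{Main obstacle.} The main obstacle is the "if" direction for the $P_3$ gluing: verifying that the separator is genuinely proper, in particular that neither side induces a bare $ab$-path and that exactly two components arise. The safest route is to bypass properness and reprove the replacement argument directly. Any forbidden structure $H$ in $G$ has no clique separator, no proper 2-separator and no proper $P_3$-separator, so $H\cap(X\cup\{a,b\})$ or $H\cap(Y\cup\{a,b\})$ is a single $ab$-path $R$. We then substitute $P$ or $Q$ for $R$. For $H$ a wac centred at $c$, we use that $Q$ carries a neighbour of $c$ exactly as $R$ does when $R$ lies in the tight side, and that $P$ carries none when $R$ lies in the loose side.
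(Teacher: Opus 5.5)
Your ``only if'' direction is the paper's argument (Theorem~\ref{th:Decomp} plus induction), and your check that the blocks satisfy the hypotheses of the gluing operations is correct.

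The main line of your ``if'' direction, however, rests on a false step. The given $G_X$ and $G_Y$ are in general \emph{not} blocks of decomposition of the glued graph $G$, so Lemmas~\ref{l:decProper2sep} and~\ref{l:decProperP3Sep} cannot be invoked as stated. A block of decomposition needs its marker path to be an actual $aYb$-path (resp.\ $aXb$-path) of $G$. In the gluing operations, $Q$ and $P$ are arbitrary. $Q$ may have length~2 while every $aYb$-path of $G$ is longer. In the $P_3$ case, $Q$ may contain a different number of neighbours of $c$ than every $aYb$-path. So $G_X$ and $G_Y$ need not even be induced subgraphs of $G$. This, and not properness, is the real obstacle, and it is exactly why the paper says it needs variants of these lemmas for pieces that are not induced subgraphs of $G$. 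Properness is a red herring: it can genuinely fail (e.g.\ $G_X$ a hole in the 2-separator gluing), and it is not needed. Incidentally, the neighbours of $c$ in the interior of $Q$ are deleted by the gluing, not transferred.

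Your fallback, redoing the replacement argument directly, is the right fix and is what the paper has in mind, so it should be the whole proof. Triangles lie entirely on one side. A theta or wac $H$ of $G$ meeting both sides contains $a$ and $b$, since it has no clique separator. Then one of $H\cap(X\cup\{a,b\})$ and $H\cap(Y\cup\{a,b\})$ is an $ab$-path $R$. Substituting $Q$ or $P$ for $R$ works because these paths have length at least~2, and their internal vertices have no neighbours outside the path except, for $Q$, the vertex $c$.

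The one place your justification is off is a wac $(C,c,c')$ centred at the glued vertex $c$ with $R$ on the loose side. What you need there is not that $P$ carries no neighbour of $c$, but that $c$ still has at least three neighbours on the new rim. This holds because $C$ minus the interior of $R$ is an $aYb$-path, hence an $ab$-path of $G_Y$ other than $P$ and $acb$. By the hypothesis on $G_Y$, it therefore has an internal neighbour of $c$. The other centre $c'$ is unaffected: the interior of $R$ lies in a sector of $(C,c')$, and $c'$ has no neighbour in the interior of $P$. When $R$ is on the tight side, $c$ keeps $a$, $b$ and a neighbour in the interior of $Q$. For a theta through $c$, $acb$ is one of its three paths, and replacing the loose-side path by $P$ gives a theta in $G_Y$.
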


\begin{proof}
  A (theta, triangle, wac)-free graph can be obtained from basic graphs by the operations because of Theorem~\ref{th:Decomp} (and an easy induction). For the converse, we need variants of  Lemmas~\ref{l:decClique}, \ref{l:decProper2sep} and~\ref{l:decProperP3Sep}, where  $G_X$ and $G_Y$ are not necessarily induced subgraphs of $G$, because the marker paths might not have the right length or the right number of neighbours of $c$.  We omit the details that are easy to check.    
\end{proof}

A similar theorem can be easily obtained for (theta, triangle, even wheel)-free graphs, (theta, triangle, even hole)-free graphs and bipartite (theta, wac)-free graphs (not a word in the statement of the theorem has to be changed, just the definition of the basic graphs is different, see Lemma~\ref{l:daisyEHF}).  Also, by Lemmas~\ref{l:decProper2sepAtomic}, \ref{l:decProperP3SepAtomic} and~\ref{l:decProperP3Sep-2Sep}, one may obtain structural descriptions of superatomic graphs of the classes (by restricting the operations to gluing along $P_3$ separators), and of atomic graphs of the classes (by restricting the operations to gluing along 2-separators and extending the basic graphs to superatomic graphs of the class).

\subsection{Open questions}
\label{ssec:conj}

A natural question is whether Theorem~\ref{th:Decomp} is best possible.  Because of Lemma~\ref{l:looseSide}, it is tempting to define a loose component $X$ of $G\sm acb$ as one such that every $aXb$-path has no internal vertex adjacent to $c$.  This can be rephrased as ``$acb$ is a good $P_3$ of $G[X \cup acb]$''.  
All our theorems would stay true and we would have a more precise conclusion of Theorem~\ref{th:Decomp}. 
However, by Theorem~\ref{th:goodNPC}, it would be coNP-complete to decide if a graph contains a proper $P_3$-separator, so the theorem would be less practical. 

We wonder whether proper 2-separators and proper $P_3$-separators preserve bounded cliquewidth. We propose the following conjecture.  

\begin{conj}
  There is a function $f$ with the following property. Let $\mathcal C$ and $\mathcal B$ be two classes of graphs closed under taking induced subgraphs and such that every 2-connected graph from $\mathcal C$ is either in $\mathcal B$ or has a proper 2-separator or a proper $P_3$-separator.  Moreover  for some constant $a$, every graph in $\mathcal B$ has cliquewidth at most $a$. Then, every graph in $\mathcal C$ has cliquewidth at most $f(a)$. 
\end{conj}

We wonder whether it is possible to describe the most general (theta, triangle)-free graph with no separator as defined in the present work.  We would need to combine in the description at least the (theta, triangle)-free layered wheels from \cite{DBLP:journals/jgt/SintiariT21} and the daisies.  An intermediate step could be to study (theta, triangle, c-wac)-free graphs. 

\section*{Acknowledgement}

Thanks to Kristina Vu\v skovi\'c for suggesting several improvements to the original manuscript. 

Both authors are supported by Projet ANR GODASse, Projet-ANR-24-CE48-4377.


\end{document}